\documentclass[12pt,fleqn,draft]{article} 
\usepackage{amsmath,amssymb,amsthm,amsfonts,bm}
\usepackage{enumerate}
\usepackage{indentfirst}
\usepackage{cite}
%\usepackage{showkeys}
%%%%% textstyle %%%%%
\topmargin=-1cm
\oddsidemargin=0cm
\pagestyle{plain}
\textwidth=16cm
\textheight=21.2cm

%%%%%% Colors %%%%%%
\usepackage[usenames,dvipsnames]{color}

\makeatletter
%%%%%% Cite %%%%%%
\def\@cite#1#2{[{{\bfseries #1}\if@tempswa , #2\fi}]}
%%%%%% Section %%%%%%
\renewcommand{\section}{%
\@startsection{section}{1}{\z@}
{0.5truecm plus -1ex minus -.2ex}%
{1.0ex plus .2ex}{\bfseries\large}}
\def\@seccntformat#1{\csname the#1\endcsname.\ }
\makeatother

%%%%%% equation %%%%%
\setlength\arraycolsep{2pt}

%%%%% command %%%%%
\numberwithin{equation}{section} 
\pagestyle{plain}
\theoremstyle{theorem}
\newtheorem{thm}{Theorem}[section]

\newtheorem{lem}[thm]{Lemma}

\theoremstyle{definition}
\newtheorem{df}{Definition}[section]
\newtheorem{remark}{Remark}[section]
\newtheorem{example}{Example}[section]

\newtheorem*{prth1.1}{Proof of Theorem 1.1}
\newtheorem*{prth1.2}{Proof of Theorem 1.2}
\newtheorem*{prth1.3}{Proof of Theorem 1.3}

\newcommand{\ep}{\varepsilon}

\let\hat\widehat

\def\Pi{\hat\pi}

%%%                   %%%
\begin{document}
\footnote[0]
    {2010 {\it Mathematics Subject Classification}\/: 
    35A35, 47N20, 35G30, 35L70.  
    }
\footnote[0]
    {{\it Key words and phrases}\/: 
    simultaneous abstract evolution equations; 
    parabolic-hyperbolic phase-field systems; existence; time discretizations; 
    error estimates.} 
%==========================title==========================
\begin{center}
    \Large{{\bf Time discretization of 
an initial value problem for 
a simultaneous abstract 
evolution equation applying to \\ 
parabolic-hyperbolic phase-field systems}}
\end{center}
\vspace{5pt}
%===========================author=========================
\begin{center}
    Shunsuke Kurima%
   %\footnote{Corresponding author}
    \\
    \vspace{2pt}
    Department of Mathematics, 
    Tokyo University of Science\\
    1-3, Kagurazaka, Shinjuku-ku, Tokyo 162-8601, Japan\\
    {\tt shunsuke.kurima@gmail.com}\\
    \vspace{2pt}
\end{center}
\begin{center}    
    \small \today
\end{center}

\vspace{2pt}
%=====================  Abstract  =======================
\newenvironment{summary}
{\vspace{.5\baselineskip}\begin{list}{}{%
     \setlength{\baselineskip}{0.85\baselineskip}
     \setlength{\topsep}{0pt}
     \setlength{\leftmargin}{12mm}
     \setlength{\rightmargin}{12mm}
     \setlength{\listparindent}{0mm}
     \setlength{\itemindent}{\listparindent}
     \setlength{\parsep}{0pt}
     \item\relax}}{\end{list}\vspace{.5\baselineskip}}
\begin{summary}
{\footnotesize {\bf Abstract.} 
  This article deals with a simultaneous abstract evolution equation. 
This includes a parabolic-hyperbolic phase-field system as an example  
which consists of a parabolic equation for the relative temperature  
coupled with a semilinear damped wave equation for the order parameter  
(see e.g., \cite{GP2003, GP2004, GPS2006, WGZ2007dynamicalBD, WGZ2007}). 
Although a time discretization of 
an initial value problem for
an abstract evolution equation has been studied (see e.g., \cite{CF1996}), 
time discretizations of initial value problems 
for simultaneous abstract evolution equations 
seem to be not studied yet. 
In this paper we focus on a time discretization of a simultaneous abstract evolution 
equation applying to parabolic-hyperbolic phase-field systems. 
Moreover, we can establish an error estimate for the difference between 
continuous and discrete solutions. 
}
\end{summary}
\vspace{10pt}

\newpage
%%==============================================================%%
%%==============                                  ==============%%
%%======                      Section1                    ======%%
%%====                                                      ====%%
%%==                                                         ==%%
%%====               Introduction Results            ====%%
%%======                                                  ======%%
%%==============                                  ==============%%
%%==============================================================%%

\section{Introduction} \label{Sec1}

A time discretization of 
an initial value problem for
an abstract evolution equation 
has been studied. 
For example, Colli--Favini \cite{CF1996} have proved existence of solutions to 
the nonlinear Cauchy problem 
\begin{equation*}
     \begin{cases}
      L\dfrac{d^2u}{dt^2} + B\dfrac{du}{dt} + Au = g    
         & \mbox{in}\ (0, T), 
 \\[3mm] 
      u(0) = u_{0},\ \dfrac{du}{dt}(0) = w_{0}                                            
     \end{cases}
\end{equation*}
by employing a time discretization scheme, 
where $T>0$, $L : H \to H$ and $A : V \to V^{*}$ are linear positive selfadjoint operators, 
$H$ and $V$ are real Hilbert spaces, $V \subset H$, $V^{*}$ is the dual space of $V$, 
$B : V \to V^{*}$ is a maximal monotone operator, 
$g : (0, T) \to V^{*}$ and $u_{0}, w_{0} \in V$ are given.  
Moreover, they have derived an error estimate for the difference between 
continuous and discrete solutions. 
On the other hand, time discretizations of 
initial value problems for
simultaneous abstract evolution equations 
seem to be not studied yet. 

The system 
\begin{equation*}\tag*{(E)}\label{E}
     \begin{cases}
         (\theta + \lambda(\varphi))_{t} - \Delta\theta = f     
         & \mbox{in}\ \Omega\times(0, \infty), 
 \\[0mm]
        \ep\varphi_{tt} + \varphi_{t} - \Delta\varphi 
        + \eta(\varphi) = \lambda'(\varphi)\theta 
         & \mbox{in}\ \Omega\times(0, \infty),    
 \\[0.5mm]
     \theta(0) = \theta_{0},\ \varphi(0) = \varphi_{0},\ \varphi_{t}(0) = v_{0} 
     & \mbox{in}\ \Omega     
     \end{cases}
\end{equation*}
is a parabolic-hyperbolic phase-field system 
(see e.g., \cite{GP2003, GP2004, GPS2006, WGZ2007dynamicalBD, WGZ2007}),  
where $\Omega \subset \mathbb{R}^3$ is a bounded domain with 
smooth boundary,  
$\lambda$ and $\eta$ are smooth functions, 
$\ep>0$, $f$ is a time dependent heat source, 
and $\theta_{0}$, $\varphi_{0}$, $v_{0}$ are given initial data defined in $\Omega$.  
The unknown function $\theta$ is the relative temperature.   
The unknown function $\varphi$ is the order parameter.  
The function $\lambda$ has a quadratic growth, 
e.g, $\lambda(r) = ar^2 + br + c$ ($a, b, c \in \mathbb{R}$); 
while the function $\eta$ has a cubic growth, e.g., 
$\eta(r) = d_{1}r^3 - d_{2}r$ ($d_{1}, d_{2} > 0$). 
The second time derivative $\ep\varphi_{tt}$ is the inertial term 
which characterizes the hyperbolic dynamics. 
In the case that $\ep=0$  
the system \ref{E} is the classical phase-field model proposed by Caginalp 
(cf.\ \cite{Cag, EllZheng}; one may also see 
the monographs \cite{BrokSpr, fremond, V1996}). 
The system \ref{E} 
endowed with homogeneous Dirichlet--Neumann boundary conditions 
has been analyzed by e.g., 
Grasselli--Pata \cite{GP2003, GP2004}
and
Grasselli--Petzeltov\'a--Schimperna \cite{GPS2006}. 
Wu--Grasselli--Zheng \cite{WGZ2007} has studied the system \ref{E} 
with homogeneous Neumann boundary conditions for both $\theta$ and $\varphi$. 
In the case that $\lambda(r) = r$ for all $r \in \mathbb{R}$, 
the system \ref{E} with dynamical boundary condition 
has been analyzed by e.g., Wu--Grasselli--Zheng \cite{WGZ2007dynamicalBD}. 
In the case that $\ep=0$ and $\lambda(r) = r$ for all $r \in \mathbb{R}$  
Colli--K.\ \cite{CK} have employed a time discretization scheme 
to prove existence of solutions to the system \ref{E} 
under homogeneous Neumann--Neumann boundary conditions 
and established an error estimate for the difference between 
continuous and discrete solutions. 
However, time discretizations of parabolic-hyperbolic phase-field systems 
seem to be not studied yet. 

In this paper we consider 
the initial value problem for
the simultaneous abstract evolution equation 
%-----------------------------------------------------------------
%
%               Introduction of (P)
%
%-----------------------------------------------------------------
 \begin{equation*}\tag*{(P)}\label{P}
     \begin{cases}
         \dfrac{d\theta}{dt} + \dfrac{d\varphi}{dt} + A_{1}\theta = f     
         & \mbox{in}\ (0, T), 
 \\[4mm]
 L\dfrac{d^2\varphi}{dt^2} + B\dfrac{d\varphi}{dt} + A_{2}\varphi
         + \Phi\varphi + {\cal L}\varphi = \theta   
         & \mbox{in}\ (0, T), 
 \\[2mm] 
       \theta(0) = \theta_{0},\ \varphi(0) = \varphi_{0},\ \dfrac{d\varphi}{dt}(0) = v_{0},    
     \end{cases}
 \end{equation*}
where $T>0$,  
$L : H \to H$ is a linear positive selfadjoint operator, 
$B : D(B) \subset H \to H$, $A_{j} : D(A_{j}) \subset H \to H$ ($j = 1, 2$) are 
linear maximal monotone selfadjoint  operators, 
$V_{j}$ ($j=1, 2$) are linear subspaces of $V$ 
satisfying $D(A_{j}) \subset V_{j}$ ($j=1, 2$), 
$\Phi : D(\Phi) \subset H \to H$ is a maximal monotone operator,   
${\cal L} : H \to H$ is a Lipschitz operator, 
$f : (0, T) \to H$ and $\theta_{0} \in V_{1}$, $\varphi_{0}, v_{0} \in V_{2}$ 
are given.  
Moreover, in reference to \cite{CF1996, CK}, we deal with the problem 
%-----------------------------------------------------------------
%
%               Introduction of (P_{n})
%
%-----------------------------------------------------------------
%
 \begin{equation*}\tag*{(P)$_{n}$}\label{Pn}
     \begin{cases}
      \delta_{h}\theta_{n} + \delta_{h}\varphi_{n} + A_{1}\theta_{n+1}= f_{n+1},  
         \\[1mm] 
       L z_{n+1} + Bv_{n+1} + A_{2}\varphi_{n+1} 
      + \Phi\varphi_{n+1} + {\cal L}\varphi_{n+1} = \theta_{n+1}, 
          \\[1mm] 
       z_{0}=z_{1},\ z_{n+1} = \delta_{h}v_{n}, 
         \\[1mm]
      v_{n+1}=\delta_{h}\varphi_{n}  
     \end{cases}
 \end{equation*}
for $n=0, ..., N-1$,  
where $h=\frac{T}{N}$, $N \in \mathbb{N}$, 
\begin{align}\label{deltah}
\delta_{h}\theta_{n} := \dfrac{\theta_{n+1}-\theta_{n}}{h}, \ 
\delta_{h}\varphi_{n} := \dfrac{\varphi_{n+1}-\varphi_{n}}{h}, \ 
\delta_{h}v_{n} := \dfrac{v_{n+1}-v_n}{h},  
\end{align}
and $f_{k} := \frac{1}{h}\int_{(k-1)h}^{kh}f(s)\,ds$ for $k=1, ..., N$.
Here, putting 
\begin{align}
& \widehat{\theta}_{h}(0) := \theta_{0},\  
   \frac{d\widehat{\theta}_{h}}{dt}(t) := \delta_{h}\theta_{n}, \quad
   \widehat{\varphi}_{h}(0) := \varphi_{0},\ 
   \frac{d\widehat{\varphi}_{h}}{dt}(t) := \delta_{h}\varphi_{n},    \label{hat1} 
\\[3mm] 
&\widehat{v}_{h}(0) := v_{0},\   
   \frac{d\widehat{v}_{h}}{dt}(t) := \delta_{h}v_{n}, \label{hat2} 
\\[2mm] 
&\overline{\theta}_{h}(t) := \theta_{n+1},\ \overline{z}_{h}(t) := z_{n+1},\ 
\overline{\varphi}_{h}(t) := \varphi_{n+1},\ \overline{v}_{h}(t) := v_{n+1},\ 
\overline{f}_{h}(t) := f_{n+1}      
\label{overandunderline}   
\end{align}
for a.a.\ $t \in (nh, (n+1)h)$, $n=0, ..., N-1$, 
we can rewrite \ref{Pn} as  
%-----------------------------------------------------------------
%
%               Introduction of (P_{h})
%
%-----------------------------------------------------------------
%
 \begin{equation*}\tag*{(P)$_{h}$}\label{Ph}
     \begin{cases}
       \dfrac{d\widehat{\theta}_{h}}{dt} + \dfrac{d\widehat{\varphi}_{h}}{dt}  
       + A_{1}\overline{\theta}_{h}= \overline{f}_{h} 
      \quad \mbox{in}\ (0, T),  
      \\[4.5mm]
         L\overline{z}_{h} + B\overline{v}_{h} + A_{2}\overline{\varphi}_{h} 
         + \Phi\overline{\varphi}_{h} + {\cal L}\overline{\varphi}_{h} 
         = \overline{\theta}_{h} 
     \quad \mbox{in}\ (0, T),  
     \\[3.5mm] 
        \overline{z}_{h} = \dfrac{d\widehat{v}_{h}}{dt},\ 
        \overline{v}_{h} = \dfrac{d\widehat{\varphi}_{h}}{dt} 
     \quad \mbox{in}\ (0, T),   
     \\[4.5mm] 
         z_{0} = \dfrac{v_{1}-v_{0}}{h},\  \widehat{\theta}_{h}(0)=\theta_{0},\ 
         \widehat{\varphi}_{h}(0) = \varphi_{0},\ \widehat{v}_{h}(0)=v_{0}. 
        
     \end{cases}
 \end{equation*}

\smallskip

%%%%%%%%%%%%%%%%%%%%%%Remark%%%%%%%%%%%%%%%%%%%%%%%%%%%
\begin{remark}
Owing to  \eqref{hat1}-\eqref{overandunderline}, 
the reader can check directly the following identities: 
\begin{align}
&\|\widehat{\varphi}_h\|_{L^{\infty}(0, T; V_{2})} 
= \max\{\|\varphi_{0}\|_{V_{2}}, \|\overline{\varphi}_h\|_{L^{\infty}(0, T; V_{2})}\}, 
\label{rem1}
\\[6mm]
&\|\widehat{v}_h\|_{L^{\infty}(0, T; V_{2})} 
= \max\{\|v_{0}\|_{V_{2}}, \|\overline{v}_h\|_{L^{\infty}(0, T; V_{2})}\}, \label{rem2} 
\\[6mm] 
&\|\widehat{\theta}_h\|_{L^{\infty}(0, T; V_{1})} 
= \max\{\|\theta_{0}\|_{V_{1}}, 
                           \|\overline{\theta}_h\|_{L^{\infty}(0, T; V_{1})}\}, \label{rem3} 
\\[5mm] 
&\|\overline{\varphi}_h - \widehat{\varphi}_h\|_{L^{\infty}(0, T; V_{2})} 
= h\Bigl\|\frac{d\widehat{\varphi}_h}{dt}\Bigr\|_{L^{\infty}(0, T; V_{2})} 
= h\|\overline{v}_h\|_{L^{\infty}(0, T; V_{2})} , \label{rem4}  
\\[3mm] 
&\|\overline{v}_h - \widehat{v}_h\|_{L^{\infty}(0, T; H)} 
= h\Bigl\|\frac{d\widehat{v}_h}{dt}\Bigr\|_{L^{\infty}(0, T; H)} 
= h\|\overline{z}_h\|_{L^{\infty}(0, T; H)} , \label{rem5} 
\\[3mm] 
&\|\overline{\theta}_h - \widehat{\theta}_h\|_{L^2(0, T; H)}^2 
= \frac{h^2}{3}\Bigl\|\frac{d\widehat{\theta}_h}{dt}\Bigr\|_{L^2(0, T; H)}^2. \label{rem6}
\end{align}  
\end{remark} 
%%%%%%%%%%%%%%%%%%%%%%Remark%%%%%%%%%%%%%%%%%%%%%%%%%%%
\smallskip

Moreover, we deal with the following conditions (C1)-(C14): 
%-----------------------------------------------------------------
%
%         　　　　　　       Assumption
%
%-----------------------------------------------------------------
%
 \begin{enumerate} 
 \setlength{\itemsep}{0mm}
 \item[(C1)] 
 $V$ and $H$ are real Hilbert spaces satisfying $V \subset H$ with 
 dense, continuous and compact embedding. Moreover, 
 the inclusions $V \subset H \subset V^{*}$ hold 
 by identifying $H$ with its dual space $H^{*}$, where 
 $V^{*}$ is the dual space of $V$.  
 \item[(C2)] 
 $V_{j}$ ($j=1, 2$) are closed linear subspaces of $V$, dense in $H$ 
 and reflexive.   
 \item[(C3)] 
 $L : H \to H$ is a bounded linear operator fulfilling 
    \[
    (Lw, z)_{H} = (w, Lz)_{H}\ \mbox{for all}\ w, z \in H, \quad  
    (Lw, w)_{H} \geq c_{L}\|w\|_{H}^2\ \mbox{for all}\ w \in H,   
    \]
 where $c_{L} > 0$ is a constant. 
 \item[(C4)]
 $A_{1} : D(A_{1}) \subset H \to H$ 
 is a linear maximal monotone selfadjoint operator, 
 where $D(A_{1})$ is a linear subspace of $H$ and $D(A_{1}) \subset V_{1}$.  
 Moreover, there exists a bounded linear monotone operator
 $A_{1}^{*} : V_{1} \to V_{1}^{*}$ such that  
 \begin{align*}
 &\langle A_{1}^{*}w, z \rangle_{V_{1}^{*}, V_{1}} 
 = \langle A_{1}^{*}z, w \rangle_{V_{1}^{*}, V_{1}} 
 \quad\mbox{for all}\ w, z \in V_{1}, 
 \\  
 &A_{1}^{*}w = A_{1}w \quad\mbox{for all}\ w \in D(A_{1}). 
 \end{align*}   
 Moreover, for all $\alpha>0$ there exists $\sigma_{\alpha}>0$ such that 
 $$
 \langle A_{1}^{*}w, w \rangle_{V_{1}^{*}, V_{1}} + \alpha\|w\|_{H}^2 
 \geq \sigma_{\alpha}\|w\|_{V_{1}}^2 
 \quad \mbox{for all}\ w \in V_{1}. 
 $$ 
 \item[(C5)]  
 For all $g \in H$ and all $a> 0$, if there exists 
 $\theta \in V_{1}$ such that 
 $\theta + aA_{1}^{*}\theta = g$ in $V_{1}^{*}$, 
 then it follows that $\theta \in D(A_{1})$ and 
 $\theta + aA_{1}\theta = g$ in $H$. 
 \item[(C6)] 
 $B : D(B) \subset H \to H$, $A_{2} : D(A_{2}) \subset H \to H$ are 
 linear maximal monotone selfadjoint  operators, 
 where $D(B)$ and $D(A_{2})$ are linear subspaces of $H$, 
 satisfying 
 \begin{align*}
 &D(B) \cap D(A_{2}) \neq \emptyset, 
 \\ 
 &(Bw, A_{2}w)_{H} \geq 0 \quad \mbox{for all}\ w \in D(B) \cap D(A_{2}), 
 \\
 &(Bw, A_{2}z)_{H} = (Bz, A_{2}w)_{H} 
                                    \quad \mbox{for all}\ w, z \in D(B) \cap D(A_{2}). 
 \end{align*} 
 Moreover, the inclusion $D(A_{2}) \subset V_{2}$ holds. 
 \item[(C7)] 
 $\Phi : D(\Phi) \subset H \to H$ is a maximal monotone operator 
 satisfying $\Phi(0) = 0$ and $V \subset D(\Phi)$. 
 Moreover, there exist constants $p, q, C_{\Phi} > 0$ such that  
 \[
 \|\Phi w - \Phi z\|_{H} \leq C_{\Phi}(1 + \|w\|_{V}^p + \|z\|_{V}^{q})\|w-z\|_{V}
 \quad \mbox{for all}\ w, z \in V. 
 \]
 \item[(C8)] 
 There exists a function $i : V \to \{x \in \mathbb{R}\ |\ x\geq0 \}$ 
 such that $(\Phi w, w-z)_{H} \geq i(w) - i(z)$ for all $w, z \in V$.
 \item[(C9)] 
 $\Phi_{\lambda}(0) = 0$, 
 $(\Phi_{\lambda}w, Bw)_{H} \geq 0$  
 for all $w \in D(B)$, 
 $(\Phi_{\lambda}w, A_{2}w)_{H} \geq 0$  
 for all $w \in D(A_{2})$, 
 where $\lambda > 0$ and $\Phi_{\lambda} : H \to H$ is  
 the Yosida approximation of $\Phi$. 
 \item[(C10)] 
 $B^{*} : V_{2} \to V_{2}^{*}$ is a bounded linear monotone operator fulfilling 
 \begin{align*}
 &\langle B^{*}w, z \rangle_{V_{2}^{*}, V_{2}} = \langle B^{*}z, w \rangle_{V_{2}^{*}, V_{2}}  
 \quad\mbox{for all}\ w, z \in V_{2},  
 \\  
 &B^{*}w = Bw \quad\mbox{for all}\ w \in D(B) \cap V_{2}. 
 \end{align*}
 \item[(C11)] 
 $A_{2}^{*} : V_{2} \to V_{2}^{*}$ is a bounded linear monotone operator fulfilling 
 \begin{align*}
 &\langle A_{2}^{*}w, z \rangle_{V_{2}^{*}, V_{2}} 
 = \langle A_{2}^{*}z, w \rangle_{V_{2}^{*}, V_{2}} 
 \quad\mbox{for all}\ w, z \in V_{2}, 
 \\  
 &A_{2}^{*}w = A_{2}w \quad\mbox{for all}\ w \in D(A_{2}). 
 \end{align*}   
 Moreover, for all $\alpha>0$ there exists $\omega_{\alpha}>0$ such that 
 $$
 \langle A_{2}^{*}w, w \rangle_{V_{2}^{*}, V_{2}} + \alpha\|w\|_{H}^2 
 \geq \omega_{\alpha}\|w\|_{V_{2}}^2 
 \quad \mbox{for all}\ w \in V_{2}. 
 $$ 
 \item[(C12)] 
 For all $g \in H$, $a, b, c, d > 0$, $\lambda>0$, if there exists 
 $\varphi_{\lambda} \in V_{2}$ such that 
 $$
 L\varphi_{\lambda} + aB^{*}\varphi_{\lambda} + bA_{2}^{*}\varphi_{\lambda} 
 + c\Phi_{\lambda}\varphi_{\lambda} + d{\cal L}\varphi_{\lambda} = g \quad  
  \mbox{in}\ V_{2}^{*},  
 $$ 
 then it follows that $\varphi_{\lambda} \in D(B) \cap D(A_{2})$ and 
 $$
 L\varphi_{\lambda} + aB\varphi_{\lambda} + bA_{2}\varphi_{\lambda} 
 + c\Phi_{\lambda}\varphi_{\lambda} + d{\cal L}\varphi_{\lambda} = g \quad 
  \mbox{in}\ H.   
 $$ 
 \item[(C13)] 
 ${\cal L} : H \to H$ is a Lipschitz continuous operator 
 with Lipschitz constant $C_{{\cal L}}>0$.
 \item[(C14)] 
 $\theta_{0} \in V_{1}$, $\varphi_{0} \in  D(B) \cap D(A_{2})$, 
$v_{0} \in D(B) \cap V_{2}$, 
 $f \in L^2(0, T; H)$.    
 \end{enumerate}
\begin{remark}
We set the conditions (C3), (C4) and (C11) 
in reference to \cite[Section 2]{CF1996}.  
The conditions (C5) and (C12) are equivalent to the elliptic regularity theory 
under some cases (see Section \ref{Sec2}). 
Moreover, we set the conditions (C7)-(C9) and (C13)  
by trying to keep a typical example (see Section \ref{Sec2})  
in reference to assumptions in 
\cite{CK, GP2003, GP2004, GPS2006, WGZ2007, WGZ2007dynamicalBD}.    
 
\end{remark}

\smallskip

%-----------------------------------------------------------------
%
%         　　　　　　       Definition
%
%-----------------------------------------------------------------
%
We define solutions of \ref{P} as follows. 
\begin{df}
A pair $(\theta, \varphi)$ with 
\begin{align*}
&\theta \in H^1(0, T; H) \cap L^{\infty}(0, T; V_{1}) \cap L^2(0, T; D(A_{1})), 
\\ 
&\varphi \in W^{2, \infty}(0, T; H) \cap W^{1, \infty}(0, T; V_{2}) 
                                                                     \cap L^2(0, T; D(A_{2})), 
\\[1mm] 
&\frac{d\varphi}{dt} \in L^2(0, T; D(B)),\ \Phi\varphi \in L^{\infty}(0, T; H) 
\end{align*}
is called a solution of \ref{P} if $(\theta, \varphi)$ satisfies 
\begin{align}
&\dfrac{d\theta}{dt} + \dfrac{d\varphi}{dt} + A_{1}\theta = f  
    \quad\mbox{in}\ H   \quad \mbox{a.e.\ on}\ (0, T), \label{df1} 
\\[2mm] 
&L\dfrac{d^2\varphi}{dt^2} + B\dfrac{d\varphi}{dt} + A_{2}\varphi
         + \Phi\varphi + {\cal L}\varphi = \theta   
         \quad\mbox{in}\ H   \quad \mbox{a.e.\ on}\ (0, T), \label{df2} 
\\ 
&\theta(0) = \theta_{0},\ \varphi(0) = \varphi_{0},\ \dfrac{d\varphi}{dt}(0) = v_{0}                                
        \quad\mbox{in}\ H. \label{df3}
\end{align}
\end{df}

\medskip

Now the main results read as follows. 

%%%%%%%%%%%%%%%%%%Theorem1%%%%%%%%%%%%%%%%%%%%%%%%
%%%%%%%%%%%%%%%%%%%%%%%%%%%%%%%%%%%%%%%%%%%%%%%%%%
\begin{thm}\label{maintheorem1}
Assume that {\rm (C1)-(C14)} hold. 
Then there exists $h_{0} \in (0, 1)$ such that  
for all $h \in (0, h_{0})$ 
there exists a unique solution $(\theta_{n+1}, \varphi_{n+1})$ 
of {\rm \ref{Pn}} 
satisfying 
$$
\theta_{n+1} \in D(A_{1}),\ \varphi_{n+1} \in D(B) \cap D(A_{2}) 
\quad\mbox{for}\ n=0, ..., N-1.
$$ 
\end{thm}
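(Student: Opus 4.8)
The plan is to fix $n\in\{0,\dots,N-1\}$ and, assuming $\theta_n$, $\varphi_n$, $v_n$ are already constructed with $\varphi_n\in D(B)\cap D(A_2)$ and $v_n\in H$ (the base step $n=0$ being guaranteed by (C14)), to produce $(\theta_{n+1},\varphi_{n+1})$; the whole scheme then closes by induction, with $v_{n+1}:=\delta_h\varphi_n$, $z_{n+1}:=\delta_h v_n$ and $z_0:=z_1$. Eliminating $v_{n+1}$ and $z_{n+1}$ from \ref{Pn} via $v_{n+1}=\delta_h\varphi_n$, $z_{n+1}=\delta_h v_n$, the pair $(\theta,\varphi):=(\theta_{n+1},\varphi_{n+1})$ must solve a coupled stationary system
\begin{align*}
&\tfrac{1}{h}\theta + A_1\theta + \tfrac{1}{h}\varphi = g_1 \quad\text{in}\ H,\\
&\tfrac{1}{h^2}L\varphi + \tfrac{1}{h}B\varphi + A_2\varphi + \Phi\varphi + \mathcal{L}\varphi - \theta = g_2 \quad\text{in}\ H,
\end{align*}
where $g_1,g_2\in H$ are assembled from the known quantities (note that $g_2$ contains $B\varphi_n$, which lies in $H$ by the induction hypothesis, together with $Lv_n$ and $L\varphi_n$). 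I would regularize by replacing $\Phi$ with its Yosida approximation $\Phi_\lambda$ and recast the system variationally in $V_1^{*}\times V_2^{*}$, using $A_1^{*},B^{*},A_2^{*}$ instead of $A_1,B,A_2$; this makes sense since $V_j\subset V\subset D(\Phi)$ and $\Phi_\lambda:H\to H$ is globally Lipschitz, so every operator involved maps $V_1\times V_2$ boundedly into $V_1^{*}\times V_2^{*}$.

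The second step is to check that for $h$ small the operator $\mathcal{S}_\lambda:V_1\times V_2\to V_1^{*}\times V_2^{*}$ defined by the regularized system is bounded, hemicontinuous, monotone and coercive, and then to apply the Browder--Minty theorem ($V_1,V_2$ being reflexive by (C2)) to obtain a unique $(\theta_\lambda,\varphi_\lambda)\in V_1\times V_2$. Monotonicity rests on $\langle A_j^{*}w,w\rangle\geq0$, $(Lw,w)_H\geq c_L\|w\|_H^2$ and $(\Phi_\lambda w,w)_H\geq0$ (from $\Phi_\lambda(0)=0$ and monotonicity), the only terms without a favourable sign — the cross-coupling $\tfrac{1}{h}(\varphi,\theta)_H-(\theta,\varphi)_H$ and the Lipschitz term coming from $\mathcal{L}$ — being absorbed into $\tfrac{1}{h}\|\theta\|_H^2$ and $\tfrac{c_L}{h^2}\|\varphi\|_H^2$ once $h$ is so small that, e.g., $c_L>2h+2h^2C_{\mathcal{L}}$; coercivity with respect to the full $V_1\times V_2$ norm then follows from (C4) and (C11). (Alternatively one may rescale $\theta\mapsto h\theta$, turning the coupling into a genuinely skew-symmetric, hence monotone, perturbation; the term from $\mathcal{L}$ still forces $h$ small.) Once $(\theta_\lambda,\varphi_\lambda)$ is in hand, the elliptic-regularity assumptions (C5) and (C12) upgrade the two variational identities to identities in $H$, so that $\theta_\lambda\in D(A_1)$ and $\varphi_\lambda\in D(B)\cap D(A_2)$.

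The third step is to establish $\lambda$-independent bounds, with $h$ fixed. Testing the system with $(\theta_\lambda,\varphi_\lambda)$ bounds $\theta_\lambda$ in $V_1$ and $\varphi_\lambda$ in $V_2$; testing the (suitably scaled) second identity in $H$ by $\Phi_\lambda\varphi_\lambda$ and invoking $(\Phi_\lambda\varphi_\lambda,B\varphi_\lambda)_H\geq0$ and $(\Phi_\lambda\varphi_\lambda,A_2\varphi_\lambda)_H\geq0$ from (C9) bounds $\Phi_\lambda\varphi_\lambda$ in $H$; testing next by $A_2\varphi_\lambda$ and using $(B\varphi_\lambda,A_2\varphi_\lambda)_H\geq0$ from (C6) bounds $A_2\varphi_\lambda$, hence $B\varphi_\lambda$, in $H$. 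Extracting a subsequence, $\varphi_\lambda\rightharpoonup\varphi_{n+1}$ in $V_2$ (hence strongly in $H$, since $V_2$ is a closed subspace of $V$ and $V\hookrightarrow H$ is compact by (C1)) and $\theta_\lambda\rightharpoonup\theta_{n+1}$ in $V_1$; the closedness of the graphs of the linear maximal monotone operators $A_1,B,A_2$, together with $J_\lambda\varphi_\lambda=\varphi_\lambda-\lambda\Phi_\lambda\varphi_\lambda\to\varphi_{n+1}$ in $H$ and the maximal monotonicity of $\Phi$, then permits passage to the limit and yields a solution of \ref{Pn} with $\theta_{n+1}\in D(A_1)$, $\varphi_{n+1}\in D(B)\cap D(A_2)$ and $\Phi\varphi_{n+1}\in H$. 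Uniqueness at step $n$ is obtained by subtracting two solutions, testing the difference equations in $H$, and using the monotonicity of $A_1,B,A_2,\Phi$ while absorbing the contribution of $\mathcal{L}$ and of the coupling into $\tfrac{1}{h}\|\cdot\|_H^2$ and $\tfrac{c_L}{h^2}\|\cdot\|_H^2$; this again restricts $h$ to $(0,h_0)$, with $h_0\in(0,1)$ depending only on $c_L$ and $C_{\mathcal{L}}$.

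I expect the principal obstacle to be the uniform-in-$\lambda$ estimates of the third step, namely the $H$-bounds on $\Phi_\lambda\varphi_\lambda$, $B\varphi_\lambda$ and $A_2\varphi_\lambda$: this is exactly where the structural sign conditions (C6) and (C9) must be exploited, and in the right order, while the operators $L$ and $\mathcal{L}$ and the $\theta$--$\varphi$ coupling, which carry no sign, have to be dominated by Young's inequality with constants that may depend on $h$ but must remain independent of $\lambda$. The remaining ingredients — the Browder--Minty argument, the regularity lifting through (C5) and (C12), the weak limit passage, and the induction on $n$ — are comparatively routine though somewhat technical.
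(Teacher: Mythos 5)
Your proposal is correct in substance but follows a genuinely different route from the paper. The paper never works on the product space: it first solves the two \emph{decoupled} elliptic problems $\theta + hA_{1}\theta = g$ (Lemma \ref{elliptic1}) and $L\varphi + hB\varphi + h^{2}A_{2}\varphi + h^{2}\Phi\varphi + h^{2}{\cal L}\varphi = g$ (Lemma \ref{elliptic2}, itself proved by Yosida regularization, the Browder--Minty surjectivity on $V_{2}$, the lifting (C12), and the $\lambda$-uniform bounds on $\Phi_{\lambda}\varphi_{\lambda}$, $B\varphi_{\lambda}$, $A_{2}\varphi_{\lambda}$ via (C9) and (C6)), and then recovers the coupling by showing that the composition ${\cal S}={\cal B}\circ{\cal A}$ of the two solution operators is a contraction on $H$ with constant $h\,[\,2(c_{L}-h^{2}-C_{{\cal L}}h^{2})^{1/2}]^{-1}$, so the Banach fixed-point theorem yields existence and uniqueness simultaneously. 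You instead keep the system coupled, view it as a single bounded, hemicontinuous, monotone, coercive operator on $V_{1}\times V_{2}$ after Yosida regularization, and apply Browder--Minty once; the price is that you must verify monotonicity of the non-symmetric cross-coupling $(\tfrac1h-1)(\varphi-\varphi',\theta-\theta')_{H}$ (your rescaling $\theta\mapsto h\theta$, which renders it exactly skew-symmetric, is the cleanest way to see this) and you must carry $\theta_{\lambda}$ through the $\lambda$-uniform estimates, whereas in the paper the datum $\theta$ of the second sub-problem is fixed during that stage. Both arguments use the same ingredients in the hard part --- (C5)/(C12) for the regularity lifting, (C9) and (C6) in the order you describe for the $H$-bounds on $\Phi_{\lambda}\varphi_{\lambda}$, $B\varphi_{\lambda}$, $A_{2}\varphi_{\lambda}$, and a smallness condition on $h$ of the same form $c_{L}\gtrsim h + C_{{\cal L}}h^{2}$ --- so the restriction $h\in(0,h_{0})$ you obtain matches the paper's. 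What the paper's decoupling buys is two reusable scalar lemmas and a one-line uniqueness proof via the contraction property; what your product-space formulation buys is the elimination of the fixed-point iteration, delivering existence and uniqueness in a single monotonicity argument. Your identification of the $\lambda$-uniform estimates as the delicate point is accurate, and your bookkeeping of the induction (in particular that $g_{2}$ involves $B\varphi_{n}\in H$, available from the previous step and from (C14) at $n=0$) is the detail most easily overlooked.
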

%%%%%%%%%%%%%%%%%%Theorem1%%%%%%%%%%%%%%%%%%%%%%%%
%%%%%%%%%%%%%%%%%%%%%%%%%%%%%%%%%%%%%%%%%%%%%%%%%%

%%%%%%%%%%%%%%%%%%Theorem2%%%%%%%%%%%%%%%%%%%%%%%%
%%%%%%%%%%%%%%%%%%%%%%%%%%%%%%%%%%%%%%%%%%%%%%%%%%
\begin{thm}\label{maintheorem2}
Let $h_{0}$ be as in Theorem \ref{maintheorem1}. 
Assume that {\rm (C1)-(C14)} hold. 
Then there exists a unique solution $(\theta, \varphi)$  
of {\rm \ref{P}}. 
\end{thm}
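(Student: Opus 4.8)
The plan is to obtain a solution of \ref{P} as the limit, as $h\downarrow0$, of the discrete solutions supplied by Theorem~\ref{maintheorem1}, and then to prove uniqueness by a Gronwall argument carried out directly on \ref{P}. Fix $h\in(0,h_{0})$, let $(\theta_{n+1},\varphi_{n+1})$ solve \ref{Pn}, and form the interpolants of \eqref{hat1}--\eqref{overandunderline}, so that \ref{Ph} holds. The first a priori estimate comes from testing the first line of \ref{Ph} by $\overline{\theta}_{h}$ and the second by $\overline{v}_{h}=\frac{d\widehat{\varphi}_{h}}{dt}$ and adding: the cross terms $(\frac{d\widehat{\varphi}_{h}}{dt},\overline{\theta}_{h})_{H}$ and $(\overline{\theta}_{h},\overline{v}_{h})_{H}$ cancel, and, using (C3) for the $L$-term, the monotonicity of $B$, the telescoping inequalities for the symmetric forms $A_{1}^{*},A_{2}^{*}$ of (C4) and (C11), the inequality (C8) for the $\Phi$-term, (C13) for the ${\cal L}$-term, and a discrete Gronwall lemma, one gets bounds uniform in $h$ for $\widehat{\theta}_{h},\overline{\theta}_{h}$ in $L^{2}(0,T;V_{1})\cap L^{\infty}(0,T;H)$, for $\widehat{v}_{h},\overline{v}_{h}$ in $L^{\infty}(0,T;H)$, for $\widehat{\varphi}_{h},\overline{\varphi}_{h}$ in $L^{\infty}(0,T;V_{2})$, and for $\sup_{t}i(\overline{\varphi}_{h}(t))$. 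Testing next the first line of \ref{Ph} by $\frac{d\widehat{\theta}_{h}}{dt}$ (the coupling term being absorbed thanks to the bound just obtained on $\overline{v}_{h}=\frac{d\widehat{\varphi}_{h}}{dt}$) and using again the telescoping inequality for $A_{1}^{*}$ gives $\frac{d\widehat{\theta}_{h}}{dt}$ bounded in $L^{2}(0,T;H)$ and $\overline{\theta}_{h}$ bounded in $L^{\infty}(0,T;V_{1})$; rewriting the first line as $A_{1}\overline{\theta}_{h}=\overline{f}_{h}-\frac{d\widehat{\theta}_{h}}{dt}-\frac{d\widehat{\varphi}_{h}}{dt}$ then gives $\overline{\theta}_{h}$ bounded in $L^{2}(0,T;D(A_{1}))$.

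For the hyperbolic component one estimates in the spirit of \cite{CF1996}. Differencing the second line of \ref{Pn} in $n$ and testing by $\overline{z}_{h}=\frac{d\widehat{v}_{h}}{dt}$, the increments of the $L$-, $B$- and $A_{2}$-terms telescope or are nonnegative (the $A_{2}$-one because $\varphi_{n+1}-\varphi_{n}=hv_{n+1}$), the $\Phi$-increment is estimated via (C7) (since $\|\varphi_{n+1}-\varphi_{n}\|_{V}=h\|v_{n+1}\|_{V}$) and the ${\cal L}$-increment via (C13); a discrete Gronwall lemma — together with a uniform bound on the first-step quantities $z_{1},v_{1},\varphi_{1}$, which is where the compatibility of $\theta_{0},\varphi_{0},v_{0}$ in (C14) enters — gives $\overline{z}_{h}$ (hence $\frac{d\widehat{v}_{h}}{dt}$) bounded in $L^{\infty}(0,T;H)$ and $\widehat{v}_{h},\overline{v}_{h}$ bounded in $L^{\infty}(0,T;V_{2})$. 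Testing the second line of \ref{Pn} by $A_{2}\varphi_{n+1}$ — the term $(Bv_{n+1},A_{2}\varphi_{n+1})_{H}$ telescoping by the Cauchy--Schwarz inequality for the nonnegative symmetric form $w\mapsto(Bw,A_{2}w)_{H}$ of (C6), and $\Phi\overline{\varphi}_{h}$ being bounded in $L^{\infty}(0,T;H)$ by (C7) together with $\overline{\varphi}_{h}\in L^{\infty}(0,T;V)$ — then yields $\overline{\varphi}_{h}$ bounded in $L^{2}(0,T;D(A_{2}))$ and hence, from the second line itself, $B\overline{v}_{h}$ bounded in $L^{2}(0,T;H)$. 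Finally, \eqref{rem4}--\eqref{rem6} show that the piecewise linear and piecewise constant interpolants differ by $O(h)$ in the norms at stake, so they have the same limit.

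By these bounds, weak and weak-$*$ sequential compactness, and the Aubin--Lions--Simon lemma (using the compact embeddings $V_{1}\subset H$, $V_{2}\subset H$, which follow from (C1)--(C2)), there are $(\theta,\varphi)$ with the regularity demanded in the definition of a solution of \ref{P} and a sequence $h\downarrow0$ along which $\overline{\theta}_{h}\rightharpoonup\theta$, $\overline{\varphi}_{h}\rightharpoonup\varphi$, $\overline{v}_{h}\rightharpoonup\frac{d\varphi}{dt}$, $\overline{z}_{h}\rightharpoonup\frac{d^{2}\varphi}{dt^{2}}$ weakly-$*$ in the appropriate spaces, $A_{1}\overline{\theta}_{h}\rightharpoonup A_{1}\theta$ and $A_{2}\overline{\varphi}_{h}\rightharpoonup A_{2}\varphi$ weakly in $L^{2}(0,T;H)$, and $\overline{\theta}_{h}\to\theta$, $\overline{\varphi}_{h}\to\varphi$, $\overline{v}_{h}\to\frac{d\varphi}{dt}$ strongly in $L^{2}(0,T;H)$ (the strong convergence of the piecewise constant interpolants following from the strong $C([0,T];H)$-convergence of the piecewise linear ones and \eqref{rem4}--\eqref{rem6}). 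The linear terms pass to the limit at once; ${\cal L}\overline{\varphi}_{h}\to{\cal L}\varphi$ in $L^{2}(0,T;H)$ by (C13); and, since $\Phi$ and $B$ are maximal monotone with $\Phi\overline{\varphi}_{h}$ bounded in $L^{\infty}(0,T;H)$ and $B\overline{v}_{h}$ bounded in $L^{2}(0,T;H)$, the strong $L^{2}(0,T;H)$-convergence of $\overline{\varphi}_{h},\overline{v}_{h}$ and the demiclosedness of the $L^{2}(0,T;H)$-realizations of $\Phi$ and $B$ identify the weak limits of $\Phi\overline{\varphi}_{h}$ and $B\overline{v}_{h}$ as $\Phi\varphi$ and $B\frac{d\varphi}{dt}$. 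Therefore $(\theta,\varphi)$ satisfies \eqref{df1}--\eqref{df2} a.e.\ on $(0,T)$, and the initial conditions $\widehat{\theta}_{h}(0)=\theta_{0}$, $\widehat{\varphi}_{h}(0)=\varphi_{0}$, $\widehat{v}_{h}(0)=v_{0}$ pass to the limit by the $C([0,T];H)$-convergence and give \eqref{df3}; this proves existence.

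For uniqueness, let $(\theta_{1},\varphi_{1})$, $(\theta_{2},\varphi_{2})$ solve \ref{P} and set $\theta:=\theta_{1}-\theta_{2}$, $\varphi:=\varphi_{1}-\varphi_{2}$. Subtracting the equations, testing the difference of \eqref{df1} by $\theta$ and the difference of \eqref{df2} by $\frac{d\varphi}{dt}$, and adding, the cross terms cancel; moreover $(A_{1}\theta,\theta)_{H}\geq0$, $(L\frac{d^{2}\varphi}{dt^{2}},\frac{d\varphi}{dt})_{H}=\frac12\frac{d}{dt}(L\frac{d\varphi}{dt},\frac{d\varphi}{dt})_{H}$, $(A_{2}\varphi,\frac{d\varphi}{dt})_{H}=\frac12\frac{d}{dt}\langle A_{2}^{*}\varphi,\varphi\rangle_{V_{2}^{*},V_{2}}$, the $B$-difference term is nonnegative by monotonicity, $(\Phi\varphi_{1}-\Phi\varphi_{2},\frac{d\varphi}{dt})_{H}$ is estimated by (C7) (the polynomial factor lying in $L^{\infty}(0,T)$ since $\varphi_{1},\varphi_{2}\in L^{\infty}(0,T;V_{2})$), with the resulting $\|\varphi\|_{V_{2}}^{2}$ absorbed via the coercivity in (C11), and $({\cal L}\varphi_{1}-{\cal L}\varphi_{2},\frac{d\varphi}{dt})_{H}$ is estimated by (C13); using $\varphi(0)=0$ to bound $\|\varphi(t)\|_{H}^{2}$ by $T\int_{0}^{t}\|\frac{d\varphi}{dt}\|_{H}^{2}$, one obtains an integro-differential inequality for $\mathcal{E}(t):=\frac12(L\frac{d\varphi}{dt},\frac{d\varphi}{dt})_{H}+\frac12\langle A_{2}^{*}\varphi,\varphi\rangle_{V_{2}^{*},V_{2}}+\frac12\|\theta\|_{H}^{2}$ with $\mathcal{E}(0)=0$, and Gronwall's lemma forces $\mathcal{E}\equiv0$, i.e.\ $\theta\equiv\varphi\equiv0$. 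I expect the main difficulty to lie in the hyperbolic higher-order estimates of the first step — the uniform bounds on $\overline{z}_{h}$ in $L^{\infty}(0,T;H)$, on $\overline{v}_{h}$ in $L^{\infty}(0,T;V_{2})$, and on $A_{2}\overline{\varphi}_{h}$, $B\overline{v}_{h}$ in $L^{2}(0,T;H)$ — where the compatibility structure (C6), the coercivities (C4), (C11), the growth bound (C7), the discrete summation-by-parts, and above all the uniform control of the initial layer through (C14) must all be handled together and uniformly in $h$.
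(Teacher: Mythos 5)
Your proposal is correct and follows essentially the same route as the paper: the same chain of discrete a priori estimates (basic energy estimate, the $\frac{d\widehat{\theta}_h}{dt}$ estimate, the differenced-in-$n$ estimate for $\overline{z}_h$ anchored by a first-step bound from (C14), the top-order spatial estimate via the symmetry in (C6), and the $L^\infty$ bound on $\Phi\overline{\varphi}_h$), followed by the same compactness/maximal-monotonicity limit passage and the same Gronwall uniqueness argument. The only deviations are cosmetic — e.g.\ you cancel the cross terms rather than absorbing them by Young, and you test the second equation by $A_2\varphi_{n+1}$ where the paper tests by $Bv_{n+1}$, obtaining the pair of bounds on $A_2\overline{\varphi}_h$ and $B\overline{v}_h$ in the opposite order.
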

%%%%%%%%%%%%%%%%%%Theorem2%%%%%%%%%%%%%%%%%%%%%%%%
%%%%%%%%%%%%%%%%%%%%%%%%%%%%%%%%%%%%%%%%%%%%%%%%%%

%%%%%%%%%%%%%%%%Theorem Error esti%%%%%%%%%%%%%%%%%%%
%%%%%%%%%%%%%%%%%%%%%%%%%%%%%%%%%%%%%%%%%%%%%%%%%%%
\begin{thm}\label{erroresti} 
Let $h_{0}$ be as in Theorem \ref{maintheorem1}. 
Assume that {\rm (C1)-(C14)} hold. 
Assume further that $f \in W^{1, 1}(0, T; H)$. 
Then there exist constants $h_{00} \in (0, h_{0})$ and 
$M=M(T)>0$ such that 
\begin{align*}
&\|L^{1/2}(\widehat{v}_{h} - v)\|_{L^{\infty}(0, T; H)} 
+ \|B^{1/2}(\overline{v}_{h}-v)\|_{L^2(0, T; H)} 
+ \|\widehat{\varphi}_{h} - \varphi\|_{L^{\infty}(0, T; V_{2})} 
\\ 
&+ \|\widehat{\theta}_{h} - \theta\|_{L^{\infty}(0, T; H)} 
+ \|\overline{\theta}_{h} - \theta\|_{L^2(0, T; V_{1})} 
\leq M h^{1/2}
\end{align*}
for all $h \in (0, h_{00})$, 
where $v = \frac{d\varphi}{dt}$. 
\end{thm}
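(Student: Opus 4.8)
Let $(\theta,\varphi)$ be the solution of \ref{P} furnished by Theorem \ref{maintheorem2}, set $v:=\frac{d\varphi}{dt}$, and for $h\in(0,h_{0})$ let $\widehat{\theta}_{h},\widehat{\varphi}_{h},\widehat{v}_{h},\overline{\theta}_{h},\overline{z}_{h},\overline{v}_{h},\overline{\varphi}_{h}$ be the interpolants built from the solution of \ref{Pn} given by Theorem \ref{maintheorem1}. The starting point is to subtract \ref{P} from \ref{Ph}; using $\overline{z}_{h}=\frac{d\widehat{v}_{h}}{dt}$, $\overline{v}_{h}=\frac{d\widehat{\varphi}_{h}}{dt}$ and $\frac{d^{2}\varphi}{dt^{2}}=\frac{dv}{dt}$ this yields the two error identities
\[
\frac{d}{dt}(\widehat{\theta}_{h}-\theta)+\frac{d}{dt}(\widehat{\varphi}_{h}-\varphi)+A_{1}^{*}(\overline{\theta}_{h}-\theta)=\overline{f}_{h}-f ,
\]
\[
L\frac{d}{dt}(\widehat{v}_{h}-v)+B(\overline{v}_{h}-v)+A_{2}^{*}(\overline{\varphi}_{h}-\varphi)+(\Phi\overline{\varphi}_{h}-\Phi\varphi)+({\cal L}\overline{\varphi}_{h}-{\cal L}\varphi)=\overline{\theta}_{h}-\theta .
\]
The plan is the classical energy method: test the first identity by $\overline{\theta}_{h}-\theta$, test the second by $\overline{v}_{h}-v=\frac{d}{dt}(\widehat{\varphi}_{h}-\varphi)$, add, and integrate over $(0,t)$.

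The structural point that makes the argument work is the cancellation of the coupling terms: the first test produces $\bigl(\frac{d}{dt}(\widehat{\varphi}_{h}-\varphi),\overline{\theta}_{h}-\theta\bigr)_{H}$, while moving the right-hand side of the second identity to the left produces $-\bigl(\overline{\theta}_{h}-\theta,\overline{v}_{h}-v\bigr)_{H}=-\bigl(\overline{\theta}_{h}-\theta,\frac{d}{dt}(\widehat{\varphi}_{h}-\varphi)\bigr)_{H}$, and the two sum to zero. After this cancellation the good terms on the left are, writing always $\overline{\theta}_{h}-\theta=(\overline{\theta}_{h}-\widehat{\theta}_{h})+(\widehat{\theta}_{h}-\theta)$ and $\overline{\varphi}_{h}-\varphi=(\overline{\varphi}_{h}-\widehat{\varphi}_{h})+(\widehat{\varphi}_{h}-\varphi)$ in the $\frac{d}{dt}$-pairings: $\tfrac12\frac{d}{dt}\|\widehat{\theta}_{h}-\theta\|_{H}^{2}$ together with $\langle A_{1}^{*}(\overline{\theta}_{h}-\theta),\overline{\theta}_{h}-\theta\rangle$, which by (C4) dominates $\sigma_{\alpha}\|\overline{\theta}_{h}-\theta\|_{V_{1}}^{2}-\alpha\|\overline{\theta}_{h}-\theta\|_{H}^{2}$; $\tfrac12\frac{d}{dt}\|L^{1/2}(\widehat{v}_{h}-v)\|_{H}^{2}$ from the $L$-term (symmetry in (C3)); $\|B^{1/2}(\overline{v}_{h}-v)\|_{H}^{2}$ from the $B$-term (monotonicity, (C10), and $\overline{v}_{h}-v\in D(B)$ a.e.); and $\tfrac12\frac{d}{dt}\langle A_{2}^{*}(\widehat{\varphi}_{h}-\varphi),\widehat{\varphi}_{h}-\varphi\rangle$ from the $A_{2}$-term, which by (C11) dominates $\tfrac{\omega_{\alpha}}{2}\|\widehat{\varphi}_{h}-\varphi\|_{V_{2}}^{2}-\tfrac{\alpha}{2}\|\widehat{\varphi}_{h}-\varphi\|_{H}^{2}$ (here one uses the standard fact that $\frac{d}{dt}\langle Sw(t),w(t)\rangle=2\langle Sw(t),w'(t)\rangle$ a.e. for a bounded symmetric $S$ and $w\in H^{1}(0,T;V_{2})$). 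Since $\widehat{\theta}_{h}(0)=\theta(0)$, $\widehat{v}_{h}(0)=v(0)$ and $\widehat{\varphi}_{h}(0)=\varphi(0)$, all the $\frac{d}{dt}$-terms integrate to nonnegative contributions at time $t$; fixing $\alpha=1$, these are, up to lower-order $\|\cdot\|_{H}^{2}$ pieces to be reabsorbed, exactly the squares of the five norms in the statement.

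It then remains to bound everything else by $Ch+C\int_{0}^{t}y(s)\,ds$, where $y(t)$ collects the left-hand quantities just listed. The genuinely $h$-sized contributions are the bar-versus-hat mismatches: the pairings $\bigl(\frac{d}{dt}(\widehat{\theta}_{h}-\theta),\overline{\theta}_{h}-\widehat{\theta}_{h}\bigr)_{H}$, $\bigl(L\frac{d}{dt}(\widehat{v}_{h}-v),\overline{v}_{h}-\widehat{v}_{h}\bigr)_{H}$ and $\langle A_{2}^{*}(\overline{\varphi}_{h}-\widehat{\varphi}_{h}),\overline{v}_{h}-v\rangle$ are each $O(h)$ after integration, by \eqref{rem4}, \eqref{rem5}, \eqref{rem6} (and the obvious $L^{2}$-analogue of \eqref{rem6} for $v$) combined with the uniform-in-$h$ a priori bounds on $\|\overline{z}_{h}\|_{L^{2}(0,T;H)}$, $\|\frac{d\widehat{\theta}_{h}}{dt}\|_{L^{2}(0,T;H)}$ and $\|\overline{v}_{h}\|_{L^{\infty}(0,T;V_{2})}$ established in the proof of Theorem \ref{maintheorem1}; the term $\overline{f}_{h}-f$ is $O(h)$ in $L^{1}(0,T;H)$ and $O(h^{1/2})$ in $L^{2}(0,T;H)$ because $f\in W^{1,1}(0,T;H)$, and it is paired with $\widehat{\theta}_{h}-\theta$ (controlled in $L^{\infty}(0,T;H)$ on the left, so Young's inequality with a small parameter applies) and with $\overline{\theta}_{h}-\widehat{\theta}_{h}$ ($O(h)$ in $L^{2}$). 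The nonlinear terms are handled through (C7) and (C13): since $\overline{\varphi}_{h}$ and $\varphi$ are bounded in $L^{\infty}(0,T;V)$ uniformly in $h$, one gets $\|\Phi\overline{\varphi}_{h}-\Phi\varphi\|_{H}+\|{\cal L}\overline{\varphi}_{h}-{\cal L}\varphi\|_{H}\le C(h+\|\widehat{\varphi}_{h}-\varphi\|_{V_{2}})$, which is then paired with $\overline{v}_{h}-v$ and, using $\|\overline{v}_{h}-v\|_{H}\le Ch+C\|L^{1/2}(\widehat{v}_{h}-v)\|_{H}$ from (C3), absorbed by Young's inequality into $Ch+C\int_{0}^{t}y$. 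Collecting these estimates gives $y(t)\le Ch+C\int_{0}^{t}y(s)\,ds$ on $[0,T]$, and Gronwall's lemma yields $y(t)\le M'h$; taking square roots produces the asserted bound with $M=M(T)$. The main technical obstacle is exactly this bookkeeping of the piecewise-constant/piecewise-linear discrepancies — checking that each such term is truly $O(h)$ — whereas the energy-plus-cancellation structure is the clean part; no a priori estimate beyond those proved for Theorem \ref{maintheorem1} is needed, and the threshold $h_{00}\le h_{0}$ only serves to legitimate the small constants used when reabsorbing terms into the left-hand side.
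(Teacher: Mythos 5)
Your proposal is correct and follows essentially the same route as the paper's proof (Lemma \ref{semierroresti} plus the final observation on $\overline{f}_h-f$): an energy estimate on the error system, systematic splitting of $\overline{\theta}_h-\theta$, $\overline{\varphi}_h-\varphi$, $\overline{v}_h-v$ into hat-minus-continuous parts plus $O(h)$ bar-minus-hat discrepancies controlled by \eqref{rem4}--\eqref{rem6} and the Section~\ref{Sec4} a priori bounds, followed by Young's inequality and Gronwall. The only deviation is that you test the first error equation by $\overline{\theta}_h-\theta$ so that the two coupling terms cancel exactly, whereas the paper tests by $\widehat{\theta}_h-\theta$ and simply absorbs both coupling terms via Young's inequality (see \eqref{e2} and \eqref{e11}); since these terms are lower order either way, this is a cosmetic rather than substantive difference.
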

%%%%%%%%%%%%%%%%Theorem Error esti%%%%%%%%%%%%%%%%%%%
%%%%%%%%%%%%%%%%%%%%%%%%%%%%%%%%%%%%%%%%%%%%%%%%%%%

This paper is organized as follows. 
Section \ref{Sec2} gives some examples. 
In Section \ref{Sec3} we establish existence of solutions to \ref{Pn} 
in reference to \cite[Section 4]{CK2}. 
Section \ref{Sec4} devotes to the proof of existence for \ref{P}. 
In Section \ref{Sec5} we derive error estimates between solutions of \ref{P} 
and solutions of \ref{Ph}. 

\vspace{10pt}

%%==============================================================%%
%%==============                                  ==============%%
%%======                      Section2                    ======%%
%%====                                                      ====%%
%%==                                                          ==%%
%%====                        Examples                 ====%%
%%======                                                  ======%%
%%==============                                  ==============%%
%%==============================================================%%
\section{Examples}\label{Sec2}
In this section we give the following examples. 
%-----------------------------------------------------------------
%
%         　　　　　　       Examples
%
%-----------------------------------------------------------------
%
\begin{example}
We consider the following homogeneous Dirichlet--Neumann problem 
\begin{equation*}\tag*{(P1)}\label{P1}
     \begin{cases}
         \theta_{t} + \varphi_{t} - \Delta\theta = f     
         & \mbox{in}\ \Omega\times(0, T), 
 \\[1mm]
        \varphi_{tt} + \varphi_{t} - \Delta\varphi 
        + \beta(\varphi) + \pi(\varphi) = \theta 
         & \mbox{in}\ \Omega\times(0, T), 
 \\[1mm] 
         \theta = \partial_{\nu}\varphi = 0 
         & \mbox{on}\ \partial\Omega\times(0, T),  
 \\[1mm]
         \theta(0) = \theta_{0},\ \varphi(0) = \varphi_{0},\ \varphi_{t}(0) = v_{0}  
         
         &\mbox{in}\ \Omega,                                     
     \end{cases}
\end{equation*}
where $\Omega \subset \mathbb{R}^3$ is a bounded domain 
with smooth boundary $\partial\Omega$, $T>0$, 
under the following conditions: 
\begin{enumerate} 
\setlength{\itemsep}{2mm}
\item[(J1)] 
$\beta : \mathbb{R} \to \mathbb{R}$                                
is a single-valued maximal monotone function and 
there exists a proper differentiable (lower semicontinuous) convex function 
$\widehat{\beta} : \mathbb{R} \to [0, +\infty)$ such that 
$\widehat{\beta}(0) = 0$ and 
$\beta(r) = \widehat{\beta}\,'(r) = \partial\widehat{\beta}(r)$ 
for all $r \in \mathbb{R}$, 
where $\widehat{\beta}\,'$ and $\partial\widehat{\beta}$, respectively, 
are the differential and subdifferential of $\widehat{\beta}$.
\item[(J2)] 
$\beta \in C^2(\mathbb{R})$. 
Moreover, there exists a constant $C_{\beta} > 0$ such that 
$|\beta''(r)| \leq C_{\beta}(1+|r|)$ for all $r \in \mathbb{R}$. 
\item[(J3)] 
$\pi : \mathbb{R} \to \mathbb{R}$ is a Lipschitz continuous function. 
\item[(J4)] 
 $\theta_{0} \in H_{0}^{1}(\Omega)$, 
 $\varphi_{0} \in H^2(\Omega)$, 
 $\partial_{\nu}\varphi_{0} = 0$ a.e.\ on $\partial\Omega$, $v_{0} \in H^1(\Omega)$, 
 $f \in L^2(0, T; L^2(\Omega))$. 
\end{enumerate}
Moreover, we put 
\begin{align*}
&V:=H^1(\Omega),\ H:=L^2(\Omega),\ V_{1}:=H_{0}^{1}(\Omega),\ V_{2}:=H^1(\Omega), 
\\ 
&L := I : H \to H, 
\\ 
&A_{1} := - \Delta : D(A_{1}):=H^2(\Omega) \cap H_{0}^{1}(\Omega) \subset H \to H,  
\\ 
&B := I : D(B):=H \to H, 
\\ 
&A_{2} := - \Delta : D(A_{2}):=\{z \in H^2(\Omega)\ 
                           |\ \partial_{\nu}z = 0\quad \mbox{a.e.\ on}\ \partial\Omega\} 
                          \subset H  
                          \to H  
\end{align*}
and define the operators 
$A_{1}^{*} : V_{1} \to V_{1}^{*}$, $B^{*} : V_{2} \to V_{2}^{*}$, 
$A_{2}^{*} : V_{2} \to V_{2}^{*}$, $\Phi : D(\Phi) \subset H \to H$, 
${\cal L}: H \to H$
as 
\begin{align*}
&\langle A_{1}^{*}w, z \rangle_{V_{1}^{*}, V_{1}} 
:= \int_{\Omega} \nabla w \cdot \nabla z 
\quad \mbox{for}\ w, z \in V_{1}, 
\\[1mm] 
&\langle B^{*}w, z \rangle_{V_{2}^{*}, V_{2}} 
:= (w, z)_{H}
\quad \mbox{for}\ w, z \in V_{2}, 
\\[1mm] 
&\langle A_{2}^{*}w, z \rangle_{V_{2}^{*}, V_{2}} 
:= \int_{\Omega} \nabla w \cdot \nabla z 
\quad \mbox{for}\ w, z \in V_{2},  
\\[1mm]
&\Phi(z) := \beta(z) \quad \mbox{for}\ 
z \in D(\Phi) := \{z \in H\ |\ \beta(z) \in H \}, 
\\[1mm]
&{\cal L}(z) := \pi(z) \quad \mbox{for}\ z \in H. 
\end{align*}
Please note that the identity $\widehat{\beta}(0)=0$ in (J1) entails $\beta(0)=0$. 
We set (J1) in reference to an assumption in \cite{CK}. 
We assumed (J2) in reference to assumptions in 
\cite{GP2004, GPS2006, WGZ2007, WGZ2007dynamicalBD}.    
Moreover, we set (J3) in reference to assumptions in \cite{CK, GP2003}. 
Then the function $\mathbb{R} \ni r \mapsto d_{1}r^3-d_{2}r \in \mathbb{R}$ 
($d_{1}, d_{2} > 0$) is a typical example of $\beta + \pi$.   
Now we verify that $\Phi : D(\Phi) \subset H \to H$ is maximal monotone. 
We define the function $\phi : H \to \overline{\mathbb{R}}$ as 
\[
 \phi(z)=
   \begin{cases}
   \displaystyle\int_{\Omega}\hat{\beta}(z(x))\,dx 
   & \mbox{if}\ 
   z\in D(\phi)
              := \{z\in H\ |\ \hat{\beta}(z)\in L^1(\Omega) \}, 
   \\[2mm]
   +\infty 
   & \mbox{otherwise}.
   \end{cases}
\]
Then $\phi : H \to \overline{\mathbb{R}}$ is proper lower semicontinuous convex,  
whence $\partial\phi : D(\partial\phi) \subset H \to H$ 
is maximal monotone (see e.g., \cite[Theorem 2.8]{Barbu}). 
In addition, we have that 
\begin{align}
&D(\partial\phi) = \{z \in H\ |\ \beta(z) \in H \} = D(\Phi), 
\notag \\ \label{Phibeta}  
&\partial\phi(z)=\beta(z)=\Phi(z) \quad \mbox{for all}\ z \in D(\Phi)
\end{align}
(see e.g., \cite[Example 2.8.3]{Brezis}, \cite[Example II.8.B]{S-1997}). 
Thus $\Phi : D(\Phi) \subset H \to H$ is maximal monotone. 

Next we show that $\Phi_{\lambda}w = \beta_{\lambda}(w)$ for all $w \in H$, 
where $\beta_{\lambda}$ is the Yosida approximation operator of $\beta$ 
on $\mathbb{R}$. 
Since it follows from \eqref{Phibeta} that $\Phi=\partial\phi$, 
the identities  
\begin{align}\label{udon1}
\Phi_{\lambda}w = (\partial\phi)_{\lambda}w 
= \lambda^{-1}(w - J_{\lambda}^{\partial\phi}w) 
\end{align}
hold for all $w \in H$, 
where $J_{\lambda}^{\partial\phi} : H \to H$ 
is the resolvent operator of $\partial\phi$, 
that is, 
$J_{\lambda}^{\partial\phi}w = (I +\lambda\partial\phi)^{-1}w$ for all $w \in H$.  
On the other hand, since we derive from \eqref{Phibeta} that 
$\partial\phi(z) = \beta(z)$ for all $z \in D(\Phi)$, 
we can check that 
\begin{align}\label{udon2}
\lambda^{-1}(w - J_{\lambda}^{\partial\phi}w) 
=\lambda^{-1}(w - J_{\lambda}^{\beta}(w)) 
= \beta_{\lambda}(w) 
\end{align}
for all $w \in H$, where 
$J_{\lambda}^{\beta} : \mathbb{R} \to \mathbb{R}$ is 
the resolvent operator of $\beta$ on $\mathbb{R}$, 
that is, 
$J_{\lambda}^{\beta}(r) = (I +\lambda\beta)^{-1}(r)$ for all $r \in \mathbb{R}$.  
Hence combining \eqref{udon1} and \eqref{udon2} leads to the identity  
$\Phi_{\lambda}w = \beta_{\lambda}(w)$ for all $w \in H$. 

Next we prove that $V \subset D(\Phi)$ and 
 there exist constants $p, q, C_{\Phi} > 0$ such that  
 \[
 \|\Phi w - \Phi z\|_{H} \leq C_{\Phi}(1 + \|w\|_{V}^p + \|z\|_{V}^{q})\|w-z\|_{V}
 \]
for all $w, z \in V$. 
The Taylor theorem and the condition (J2) mean that  
\begin{align}
|\beta(r)-\beta(s)| 
&= \left|\beta'(s)(r-s) + \frac{1}{2}\beta''(r_{0})(r-s)^2\right|  
\notag \\ \label{au1}
&\leq |\beta'(s)||r-s| + \frac{C_{\beta}}{2}(1+|r|+|s|)(r-s)^2  
\end{align}
for all $r, s \in \mathbb{R}$, 
where $r_{0}$ is a constant belonging to $[r, s]$ or $[s, r]$.
Also, owing to the Taylor theorem and the condition (J2), it holds that  
\begin{align}
|\beta'(s)|=|\beta'(0) + \beta''(s_{0})s|
&\leq |\beta'(0)| + C_{\beta}(1+|s|)|s|
\notag \\ \label{au2}
&=  |\beta'(0)| + C_{\beta}(|s|+|s|^2)
\end{align}
for all $s \in \mathbb{R}$, where 
$s_{0} \in \mathbb{R}$ is a constant belonging to $[0, s]$ or $[s, 0]$.  
Thus we infer from \eqref{au1}, \eqref{au2} and 
the H\"older inequality 
that 
\begin{align}
&\|\beta(w)-\beta(z)\|_{H}^2 
\notag \\ 
&\leq C_{1}\|w-z\|_{L^2(\Omega)}^2 
      + C_{1}\|z(w-z)\|_{L^2(\Omega)}^2 
      +  C_{1}\|z^2(w-z)\|_{L^2(\Omega)}^2 
\notag \\ 
  &\,\quad+ C_{1}\|w-z\|_{L^4(\Omega)}^4 
      + C_{1}\|w(w-z)^2\|_{L^2(\Omega)}^2 
      + C_{1}\|z(w-z)^2\|_{L^2(\Omega)}^2 
\notag \\ 
&\leq C_{1}\|w-z\|_{L^2(\Omega)}^2 
         + C_{1}\|z\|_{L^4(\Omega)}^2\|w-z\|_{L^4(\Omega)}^2  
         + C_{1}\|z\|_{L^6(\Omega)}^4\|w-z\|_{L^6(\Omega)}^2 
\notag \\          
&\,\quad+ C_{1}\|w-z\|_{L^4(\Omega)}^4 
          + C_{1}\|w\|_{L^6(\Omega)}^2\|w-z\|_{L^6(\Omega)}^4 
          + C_{1}\|z\|_{L^6(\Omega)}^2\|w-z\|_{L^6(\Omega)}^4  \label{sky1}  
\end{align}
for all $w, z \in V$, where $C_{1}>0$ is a constant. 
Here the continuity of the embedding $V \hookrightarrow L^6(\Omega)$ 
and the boundedness of $\Omega$ imply that  
\begin{align}
&C_{1}\|w-z\|_{L^2(\Omega)}^2 
         + C_{1}\|z\|_{L^4(\Omega)}^2\|w-z\|_{L^4(\Omega)}^2  
         + C_{1}\|z\|_{L^6(\Omega)}^4\|w-z\|_{L^6(\Omega)}^2 
\notag \\          
&+ C_{1}\|w-z\|_{L^4(\Omega)}^4 
          + C_{1}\|w\|_{L^6(\Omega)}^2\|w-z\|_{L^6(\Omega)}^4 
          + C_{1}\|z\|_{L^6(\Omega)}^2\|w-z\|_{L^6(\Omega)}^4    
\notag \\ 
&\leq C_{2}\|w-z\|_{V}^2 
         + C_{2}\|z\|_{V}^2\|w-z\|_{V}^2  
         + C_{2}\|z\|_{V}^4\|w-z\|_{V}^2 
\notag \\          
&\,\quad+ C_{2}\|w-z\|_{V}^4 
          + C_{2}\|w\|_{V}^2\|w-z\|_{V}^4 
          + C_{2}\|z\|_{V}^2\|w-z\|_{V}^4   
\notag \\ 
&\leq C_{3}(1 + \|w\|_{V}^4 + \|z\|_{V}^4)\|w-z\|_{V}^2 \label{sky2}
\end{align}
for all $w, z \in V$, 
where $C_{2}=C_{2}(\Omega), C_{3}=C_{3}(\Omega)>0$ are some constants. 
Hence we deduce from \eqref{sky1} and \eqref{sky2} that 
\begin{align*} 
\|\beta(w)-\beta(z)\|_{H}^2 \leq C_{3}(1 + \|w\|_{V}^4 + \|z\|_{V}^4)\|w-z\|_{V}^2
\end{align*}
for all $w, z \in V$. 
Then, thanks to the identity $\beta(0)=0$, we have 
\begin{align*} 
\|\beta(w)\|_{H}^2 \leq C_{3}(1 + \|w\|_{V}^4)\|w\|_{V}^2
\end{align*}
for all $w \in V$. 
Therefore $V \subset D(\Phi)$ and 
 there exist constants $p, q, C_{\Phi} > 0$ such that  
 \[
 \|\Phi w - \Phi z\|_{H} \leq C_{\Phi}(1 + \|w\|_{V}^p + \|z\|_{V}^{q})\|w-z\|_{V}
 \]
for all $w, z \in V$. 

Next we confirm that 
there exists a function $i : V \to \{x \in \mathbb{R}\ |\ x\geq0 \}$ 
such that $(\Phi w, w-z)_{H} \geq i(w) - i(z)$ for all $w, z \in V$. 
We see from (J1) and the definition of the subdifferential that 
$\beta(r)(r-s) \geq \widehat{\beta}(r) - \widehat{\beta}(s)$ 
for all $r, s \in \mathbb{R}$. 
Thus, defining $i : V \to \{x \in \mathbb{R}\ |\ x\geq0 \}$ as 
\[
i(z) = \int_{\Omega} \widehat{\beta}(z)
\quad \mbox{for}\ z \in V \subset D(\Phi) \subset 
                  \{z \in H\ |\ \widehat{\beta}(z) \in L^1(\Omega) \}, 
\]
we can obtain that $(\Phi w, w-z)_{H} \geq i(w) - i(z)$ for all $w, z \in V$. 

Therefore the conditions (C1)-(C4), (C6)-(C11), (C13) and (C14) hold. 
Moreover, the elliptic regularity theory leads to (C5) and (C12). 
Similarly, we can check that 
the homogeneous Neumann--Neumann problem, 
the homogeneous  Dirichlet--Dirichlet problem  
and the homogeneous Neumann--Dirichlet problem 
are examples.  
\end{example}
\begin{example}
We can verify that the problem 
\begin{equation*}\tag*{(P2)}\label{P2}
     \begin{cases}
         \theta_{t} + \varphi_{t} - \Delta\theta = f     
         & \mbox{in}\ \Omega\times(0, T), 
 \\[1mm]
        \varphi_{tt} - \Delta\varphi_{t} - \Delta\varphi 
        + \beta(\varphi) + \pi(\varphi) = \theta 
         & \mbox{in}\ \Omega\times(0, T), 
 \\[1mm] 
         \theta = \varphi = 0 
         & \mbox{on}\ \partial\Omega\times(0, T),  
 \\[1mm]
         \theta(0) = \theta_{0},\ \varphi(0) = \varphi_{0},\ \varphi_{t}(0) = v_{0} 
         &\mbox{in}\ \Omega,                                     
     \end{cases}
\end{equation*}
where $\Omega \subset \mathbb{R}^3$ is a bounded domain 
with smooth boundary $\partial\Omega$, 
is an example under the three conditions (J1)-(J3) and 
the following condition 
\begin{enumerate} 
\item[(J5)] 
 $\theta_{0} \in H_{0}^{1}(\Omega)$, 
 $\varphi_{0} \in H^2(\Omega) \cap H_{0}^{1}(\Omega)$,
 $v_{0} \in H^2(\Omega) \cap H_{0}^{1}(\Omega)$, 
 $f \in L^2(0, T; L^2(\Omega))$. 
\end{enumerate}
Indeed, putting 
\begin{align*}
&V:=H^1(\Omega),\ H:=L^2(\Omega),\ V_{1}:=H_{0}^{1}(\Omega),\ 
V_{2}:=H_{0}^1(\Omega), 
\\ 
&L := I : H \to H, 
\\ 
&A_{1} := - \Delta : D(A_{1}):=H^2(\Omega) \cap H_{0}^{1}(\Omega)\subset H \to H, 
\\ 
&B := - \Delta : D(B):=H^2(\Omega) \cap H_{0}^{1}(\Omega)\subset H \to H, 
\\ 
&A_{2} := - \Delta : D(A_{2}):=H^2(\Omega) \cap H_{0}^{1}(\Omega)\subset H \to H 
\end{align*}
and defining the operators 
$A_{1}^{*} : V_{1} \to V_{1}^{*}$, $B^{*} : V_{2} \to V_{2}^{*}$, 
$A_{2}^{*} : V_{2} \to V_{2}^{*}$, $\Phi : D(\Phi) \subset H \to H$, 
${\cal L}: H \to H$
as 
\begin{align*}
&\langle A_{1}^{*}w, z \rangle_{V_{1}^{*}, V_{1}} 
:= \int_{\Omega} \nabla w \cdot \nabla z 
\quad \mbox{for}\ w, z \in V_{1}, 
\\[1mm] 
&\langle B^{*}w, z \rangle_{V_{2}^{*}, V_{2}} 
:= \int_{\Omega} \nabla w \cdot \nabla z 
\quad \mbox{for}\ w, z \in V_{2}, 
\\[1mm] 
&\langle A_{2}^{*}w, z \rangle_{V_{2}^{*}, V_{2}} 
:= \int_{\Omega} \nabla w \cdot \nabla z 
\quad \mbox{for}\ w, z \in V_{2},  
\\[1mm]
&\Phi(z) := \beta(z) \quad \mbox{for}\ 
z \in D(\Phi) := \{z \in H\ |\ \beta(z) \in H \}, 
\\[1mm]
&{\cal L}(z) := \pi(z) \quad \mbox{for}\ z \in H,  
\end{align*}
we can confirm that (C1)--(C14) hold. 
Similarly, we can show that 
the homogeneous Dirichlet--Neumann problem, 
the homogeneous  Neumann--Neumann problem  
and the homogeneous Neumann--Dirichlet problem 
are examples.  
\end{example}

\vspace{10pt}

%%==============================================================%%
%%==============                                  ==============%%
%%======                      Section3                    ======%%
%%====                                                      ====%%
%%==                                                          ==%%
%%====        Existence of discrete solution         ====%%
%%======                                                  ======%%
%%==============                                  ==============%%
%%==============================================================%%
\section{Existence of discrete solutions}\label{Sec3}

In this section we will prove Theorem \ref{maintheorem1}. 
\begin{lem}\label{elliptic1}
For all $g \in H$ and all $h >0$  
there exists a unique solution $\theta \in D(A_{1})$  
of the equation 
$\theta + hA_{1}\theta = g$ in $H$. 
\end{lem}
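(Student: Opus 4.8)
\noindent\textit{Proof proposal.} The statement is the standard fact that $I+hA_{1}$ admits a well-defined, single-valued inverse on all of $H$; I would prove it in the way that best reuses the hypotheses (C1), (C2), (C4), (C5) introduced for later use. The plan is: (i) solve the equation weakly in $V_{1}^{*}$ by Lax--Milgram; (ii) upgrade the weak solution to a genuine element of $D(A_{1})$ solving the equation in $H$ via the elliptic-regularity hypothesis (C5); (iii) deduce uniqueness from monotonicity of $A_{1}$.

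For step (i), fix $g\in H$ and $h>0$ and define the bilinear form $a:V_{1}\times V_{1}\to\mathbb{R}$ by $a(w,z):=(w,z)_{H}+h\langle A_{1}^{*}w,z\rangle_{V_{1}^{*},V_{1}}$. It is bounded on $V_{1}\times V_{1}$: the first term because the embedding $V_{1}\hookrightarrow V\hookrightarrow H$ is continuous by (C1)--(C2), the second because $A_{1}^{*}:V_{1}\to V_{1}^{*}$ is a bounded linear operator by (C4). For coercivity I would apply the last inequality in (C4) with $\alpha:=1/h$: it gives $\langle A_{1}^{*}w,w\rangle_{V_{1}^{*},V_{1}}+\tfrac{1}{h}\|w\|_{H}^{2}\geq\sigma_{1/h}\|w\|_{V_{1}}^{2}$, hence, multiplying by $h$, $a(w,w)=\|w\|_{H}^{2}+h\langle A_{1}^{*}w,w\rangle_{V_{1}^{*},V_{1}}\geq h\,\sigma_{1/h}\|w\|_{V_{1}}^{2}$ for all $w\in V_{1}$. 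Since $g\in H\subset V_{1}^{*}$ (the inclusion being the one induced by (C1)--(C2)), the map $z\mapsto(g,z)_{H}$ lies in $V_{1}^{*}$, so the Lax--Milgram theorem (or, using the symmetry of $a$ coming from (C4), the Lions--Stampacchia/Riesz argument) yields a unique $\theta\in V_{1}$ with $a(\theta,z)=(g,z)_{H}$ for all $z\in V_{1}$, i.e.\ $\theta+hA_{1}^{*}\theta=g$ in $V_{1}^{*}$.

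Step (ii) is then immediate: hypothesis (C5), applied with $a=h$, asserts exactly that such a $\theta$ belongs to $D(A_{1})$ and satisfies $\theta+hA_{1}\theta=g$ in $H$. For step (iii), if $\theta_{1},\theta_{2}\in D(A_{1})$ both solve the equation, subtracting and testing with $\theta_{1}-\theta_{2}$ in $H$ gives $\|\theta_{1}-\theta_{2}\|_{H}^{2}+h(A_{1}\theta_{1}-A_{1}\theta_{2},\theta_{1}-\theta_{2})_{H}=0$, and monotonicity of $A_{1}$ forces $\theta_{1}=\theta_{2}$. (Alternatively, uniqueness already follows from the uniqueness clause of Lax--Milgram together with $D(A_{1})\subset V_{1}$.)

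I do not expect a genuine obstacle here. The only points needing a little care are the choice $\alpha=1/h$ in (C4) to obtain $V_{1}$-coercivity with an $h$-dependent constant (possibly degenerating as $h\downarrow 0$, which is harmless for fixed $h$), and keeping track of which Hilbert-triple identification $H\subset V_{1}^{*}$ is in force. In fact, since (C4) already states that $A_{1}$ is maximal monotone on $H$, a shorter route is available --- Minty's theorem gives $R(I+hA_{1})=H$ directly, which is existence, and uniqueness is the monotonicity computation above; the variational detour is worth recording only because it mirrors the arguments needed for the later, genuinely coupled, steps.
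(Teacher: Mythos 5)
Your proposal is correct and takes essentially the same route as the paper: the paper defines $\Psi:=I+hA_{1}^{*}$ on $V_{1}$, notes via (C4) that it is monotone, continuous and coercive (hence surjective onto $V_{1}^{*}$ by the standard monotone-operator theorem, which for this bounded linear symmetric form is interchangeable with your Lax--Milgram step), and then invokes (C5) exactly as you do, with uniqueness from monotonicity of $A_{1}$. Your explicit choice $\alpha=1/h$ for the coercivity constant and the remark that Minty's theorem would shortcut the argument are both fine and consistent with the paper.
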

\begin{proof}
We define the operator $\Psi : V_{1} \to V_{1}^{*}$ as   
\begin{align*}
\langle \Psi\theta, w \rangle_{V_{1}^{*}, V_{1}} 
:= (\theta, w)_{H} + h\langle A_{1}^{*}\theta, w \rangle_{V_{1}^{*}, V_{1}} 
\quad \mbox{for}\ \theta, w \in V_{1}. 
\end{align*}
Then, owing to (C4), this operator $\Psi : V_{1} \to V_{1}^{*}$ is monotone, 
continuous and coercive, and then is  
surjective for all $h > 0$ (see e.g., \cite[p.\ 37]{Barbu}).  
Therefore the condition (C5) leads to Lemma \ref{elliptic1}.  
\end{proof}

\begin{lem}\label{elliptic2}
There exists 
$h_{1} \in \left(0, \Bigl(\frac{c_{L}}{1+C_{{\cal L}}} \Bigr)^{1/2}\right)$ such that 
for all $g \in H$ and all $h \in (0, h_{1})$ 
there exists a unique solution $\varphi \in D(B) \cap D(A_{2})$ 
of the equation 
\[
L\varphi + hB\varphi + h^2 A_{2}\varphi 
+ h^2 \Phi\varphi + h^2 {\cal L}\varphi = g 
\quad \mbox{in}\ H. 
\]
\end{lem}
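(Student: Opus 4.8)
The plan is to follow the scheme of Lemma~\ref{elliptic1}, but to handle the nonlinear terms by replacing $\Phi$ with its Yosida approximation $\Phi_{\lambda}:H\to H$ and solving first in $V_{2}^{*}$. Fix any $h_{1}\in\bigl(0,(c_{L}/(1+C_{{\cal L}}))^{1/2}\bigr)$ and let $h\in(0,h_{1})$, so that $h^{2}(1+C_{{\cal L}})<c_{L}$. For $\lambda>0$ define $\Psi_{\lambda}:V_{2}\to V_{2}^{*}$ by
\[
\langle\Psi_{\lambda}\varphi,w\rangle_{V_{2}^{*},V_{2}}:=(L\varphi,w)_{H}+h\langle B^{*}\varphi,w\rangle_{V_{2}^{*},V_{2}}+h^{2}\langle A_{2}^{*}\varphi,w\rangle_{V_{2}^{*},V_{2}}+h^{2}(\Phi_{\lambda}\varphi,w)_{H}+h^{2}({\cal L}\varphi,w)_{H}
\]
for $\varphi,w\in V_{2}$. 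Using (C3), (C9) (namely $\Phi_{\lambda}0=0$ and $\Phi_{\lambda}$ monotone), (C10), (C11) and (C13), together with $\|{\cal L}\varphi\|_{H}\le\|{\cal L}(0)\|_{H}+C_{{\cal L}}\|\varphi\|_{H}$, one checks that $\Psi_{\lambda}$ is continuous (the $\Phi_{\lambda}$-contribution because $\Phi_{\lambda}$ is Lipschitz on $H$), coercive, and strictly monotone with
\[
\langle\Psi_{\lambda}\varphi_{1}-\Psi_{\lambda}\varphi_{2},\varphi_{1}-\varphi_{2}\rangle_{V_{2}^{*},V_{2}}\ge(c_{L}-h^{2}C_{{\cal L}})\|\varphi_{1}-\varphi_{2}\|_{H}^{2};
\]
for coercivity one invokes (C11) with $\alpha=1$ to absorb the loss $-h^{2}\|\varphi\|_{H}^{2}$ into $h^{2}\langle A_{2}^{*}\varphi,\varphi\rangle$, and this is exactly where the constant $1+C_{{\cal L}}$ enters. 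Hence $\Psi_{\lambda}$ is surjective (see e.g.\ \cite[p.\ 37]{Barbu}), so there is a unique $\varphi_{\lambda}\in V_{2}$ with $\Psi_{\lambda}\varphi_{\lambda}=g$ in $V_{2}^{*}$, uniqueness following from the strict monotonicity above and $c_{L}-h^{2}C_{{\cal L}}>0$. The condition (C12), applied with $a=h$ and $b=c=d=h^{2}$, then yields $\varphi_{\lambda}\in D(B)\cap D(A_{2})$ satisfying $L\varphi_{\lambda}+hB\varphi_{\lambda}+h^{2}A_{2}\varphi_{\lambda}+h^{2}\Phi_{\lambda}\varphi_{\lambda}+h^{2}{\cal L}\varphi_{\lambda}=g$ in $H$.

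Next I would derive bounds on $\varphi_{\lambda}$ that are independent of $\lambda$ (though possibly dependent on $h$). Testing the last equation by $\varphi_{\lambda}$ in $H$ and using $(L\varphi_{\lambda},\varphi_{\lambda})_{H}\ge c_{L}\|\varphi_{\lambda}\|_{H}^{2}$, the monotonicity of $B$ and $A_{2}$, $\Phi_{\lambda}0=0$, and the Lipschitz bound on ${\cal L}$ gives $\|\varphi_{\lambda}\|_{H}\le C$ with $C$ independent of $\lambda$. Testing by $\Phi_{\lambda}\varphi_{\lambda}$ and using (C9), which gives $(\Phi_{\lambda}\varphi_{\lambda},B\varphi_{\lambda})_{H}\ge0$ and $(\Phi_{\lambda}\varphi_{\lambda},A_{2}\varphi_{\lambda})_{H}\ge0$, together with the boundedness of $L$ and Young's inequality yields $\|\Phi_{\lambda}\varphi_{\lambda}\|_{H}\le C_{h}$. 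Going back to the equation, $\|hB\varphi_{\lambda}+h^{2}A_{2}\varphi_{\lambda}\|_{H}\le C_{h}$, and expanding the square while using $(B\varphi_{\lambda},A_{2}\varphi_{\lambda})_{H}\ge0$ from (C6) separates this into $\|B\varphi_{\lambda}\|_{H}\le C_{h}$ and $\|A_{2}\varphi_{\lambda}\|_{H}\le C_{h}$; then (C11) gives $\|\varphi_{\lambda}\|_{V_{2}}\le C_{h}$ as well.

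Finally I would let $\lambda\to0$ along a subsequence. By the bounds above, the reflexivity of $V_{2}$ and the compact embedding $V\subset H$ in (C1), one has $\varphi_{\lambda}\rightharpoonup\varphi$ weakly in $V_{2}$ and strongly in $H$, while $A_{2}\varphi_{\lambda}\rightharpoonup\zeta$, $B\varphi_{\lambda}\rightharpoonup\eta$ and $\Phi_{\lambda}\varphi_{\lambda}\rightharpoonup\xi$ weakly in $H$. Then ${\cal L}\varphi_{\lambda}\to{\cal L}\varphi$ in $H$ by (C13); since $A_{2}$ and $B$ are closed (being maximal monotone), $\varphi\in D(B)\cap D(A_{2})$ with $\zeta=A_{2}\varphi$ and $\eta=B\varphi$; and since $J_{\lambda}^{\Phi}\varphi_{\lambda}\to\varphi$ in $H$ (because $\|J_{\lambda}^{\Phi}\varphi_{\lambda}-\varphi_{\lambda}\|_{H}=\lambda\|\Phi_{\lambda}\varphi_{\lambda}\|_{H}\to0$) while $\Phi(J_{\lambda}^{\Phi}\varphi_{\lambda})=\Phi_{\lambda}\varphi_{\lambda}\rightharpoonup\xi$, the demiclosedness of the maximal monotone graph of $\Phi$ gives $\varphi\in D(\Phi)$ and $\xi=\Phi\varphi$ (single-valuedness along $V_{2}\subset V$ being part of (C7)). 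Passing to the limit in the equation produces $L\varphi+hB\varphi+h^{2}A_{2}\varphi+h^{2}\Phi\varphi+h^{2}{\cal L}\varphi=g$ in $H$, the desired identity. Uniqueness of such a $\varphi\in D(B)\cap D(A_{2})$ follows exactly as for $\varphi_{\lambda}$: subtract two solutions, test the difference by itself in $H$, and use $c_{L}-h^{2}C_{{\cal L}}>0$. I expect the main obstacle to be the chain of a priori estimates — in particular extracting separate $H$-bounds for $B\varphi_{\lambda}$ and $A_{2}\varphi_{\lambda}$, where the cross-term hypotheses (C6) and (C9) are essential — and the identification of the weak limit of the monotone term $\Phi_{\lambda}\varphi_{\lambda}$.
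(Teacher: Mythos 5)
Your proposal is correct and follows essentially the same route as the paper: Yosida regularization of $\Phi$, surjectivity of the resulting monotone, continuous, coercive operator $V_{2}\to V_{2}^{*}$, upgrade to an $H$-equation via (C12), $\lambda$-independent bounds on $\varphi_{\lambda}$, $\Phi_{\lambda}\varphi_{\lambda}$, $B\varphi_{\lambda}$, $A_{2}\varphi_{\lambda}$ using (C9) and (C6), and passage to the limit with identification of the monotone term. The only (immaterial) deviations are that you separate the $B$- and $A_{2}$-bounds by expanding $\|hB\varphi_{\lambda}+h^{2}A_{2}\varphi_{\lambda}\|_{H}^{2}$ rather than testing by $B\varphi_{\lambda}$, and you identify $\xi=\Phi\varphi$ by demiclosedness of the graph along $J_{\lambda}^{\Phi}\varphi_{\lambda}$ instead of the $\lim(\Phi_{\lambda}\varphi_{\lambda},\varphi_{\lambda})_{H}$ criterion; both are standard equivalents.
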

\begin{proof}
We define the operator $\Psi : V_{2} \to V_{2}^{*}$ as   
\begin{align*}
\langle \Psi\varphi, w \rangle_{V_{2}^{*}, V_{2}} 
&:= (L\varphi, w)_{H} + h\langle B^{*}\varphi, w \rangle_{V_{2}^{*}, V_{2}} 
\notag \\ 
&\,\quad+ h^2\langle A_{2}^{*}\varphi, w \rangle_{V_{2}^{*}, V_{2}} 
+ h^2(\Phi_{\lambda}\varphi, w)_{H} 
    + h^2({\cal L}\varphi, w)_{H} 
\quad \mbox{for}\ \varphi, w \in V_{2}. 
\end{align*}
Then we see that this operator $\Psi : V_{2} \to V_{2}^{*}$ is monotone, continuous 
and coercive 
for all $h \in \left(0, \Bigl(\frac{c_{L}}{1+C_{{\cal L}}} \Bigr)^{1/2}\right)$. 
Indeed, it follows from (C3), (C11), the monotonicity of $B^{*}$ and $\Phi_{\lambda}$, 
and (C13) that 
\begin{align*}
&\langle \Psi\varphi - \Psi\overline{\varphi}, 
                                             \varphi-\overline{\varphi} \rangle_{V_{2}^{*}, V_{2}} 
\notag \\ 
&= (L(\varphi-\overline{\varphi}), \varphi-\overline{\varphi})_{H} 
    + h\langle B^{*}(\varphi-\overline{\varphi}), 
                                        \varphi-\overline{\varphi} \rangle_{V_{2}^{*}, V_{2}} 
    + h^2\langle A_{2}^{*}(\varphi-\overline{\varphi}), 
                                    \varphi-\overline{\varphi} \rangle_{V_{2}^{*}, V_{2}} 
\notag \\ 
    &\,\quad+ h^2(\Phi_{\lambda}\varphi - \Phi_{\lambda}\overline{\varphi}, 
                                                                      \varphi-\overline{\varphi})_{H} 
    + h^2({\cal L}\varphi - {\cal L}\overline{\varphi}, \varphi-\overline{\varphi})_{H} 
\notag \\ 
&\geq c_{L}\|\varphi-\overline{\varphi}\|_{H}^2 
         + \omega_{1}h^2\|\varphi-\overline{\varphi}\|_{V_{2}}^2 
         - h^2\|\varphi-\overline{\varphi}\|_{H}^2 
         - C_{{\cal L}}h^2\|\varphi-\overline{\varphi}\|_{H}^2 
\notag \\ 
&\geq \omega_{1}h^2\|\varphi-\overline{\varphi}\|_{V_{2}}^2 
\end{align*}
for all $\varphi, \overline{\varphi} \in V_{2}$ 
and all $h \in \left(0, \Bigl(\frac{c_{L}}{1+C_{{\cal L}}} \Bigr)^{1/2}\right)$. 
The boundedness of the operators $L : H \to H$, $B^{*} : V_{2} \to V_{2}^{*}$, 
$A_{2}^{*} : V_{2} \to V_{2}^{*}$,  
the Lipschitz continuity of $\Phi_{\lambda}$, the condition (C13) 
and the continuity of the embedding $V_{2} \hookrightarrow H$ 
yield that 
there exists a constant $C_{1} = C_{1}(\lambda) > 0$ such that  
\begin{align*}
&|\langle \Psi\varphi - \Psi\overline{\varphi}, w \rangle_{V_{2}^{*}, V_{2}}| 
\notag \\ 
&\leq |(L(\varphi-\overline{\varphi}), w)_{H}| 
    + h|\langle B^{*}(\varphi-\overline{\varphi}), w \rangle_{V_{2}^{*}, V_{2}}| 
    + h^2|\langle A_{2}^{*}(\varphi-\overline{\varphi}), w \rangle_{V_{2}^{*}, V_{2}}| 
\notag \\ 
    &\,\quad+ h^2|(\Phi_{\lambda}\varphi - \Phi_{\lambda}\overline{\varphi}, w)_{H}| 
    + h^2|({\cal L}\varphi - {\cal L}\overline{\varphi}, w)_{H}| 
\notag \\ 
&\leq C_{1}(1 + h + h^2)\|\varphi-\overline{\varphi}\|_{V_{2}}\|w\|_{V_{2}}
\end{align*}
for all $\varphi, \overline{\varphi}, w \in V_{2}$ and all $h>0$.  
Also, we have that  
$\langle \Psi\varphi - {\cal L}0, \varphi \rangle_{V_{2}^{*}, V_{2}} 
\geq \omega_{1}h^2\|\varphi\|_{V_{2}}^2$ 
for all $\varphi \in V_{2}$
and all $h \in \left(0, \Bigl(\frac{c_{L}}{1+C_{{\cal L}}} \Bigr)^{1/2}\right)$. 
Thus the operator $\Psi : V_{2} \to V_{2}^{*}$ is 
surjective for all $h \in \left(0, \Bigl(\frac{c_{L}}{1+C_{{\cal L}}} \Bigr)^{1/2}\right)$ 
(see e.g., \cite[p.\ 37]{Barbu}), whence we can deduce from (C12) that 
for all $g \in H$ and 
all $h \in \left(0, \Bigl(\frac{c_{L}}{1+C_{{\cal L}}} \Bigr)^{1/2}\right)$ 
there exists a unique solution $\varphi_{\lambda} \in D(B) \cap D(A)$ 
of the equation 
\begin{align}\label{BJ1}
L\varphi_{\lambda} + hB\varphi_{\lambda} + h^2A_{2}\varphi_{\lambda} 
+ h^2\Phi_{\lambda}\varphi_{\lambda} + h^2{\cal L}\varphi_{\lambda} 
= g   \quad \mbox{in}\ H. 
\end{align}
Here we multiply \eqref{BJ1} by $\varphi_{\lambda}$ 
and use the Young inequality, (C13) to infer that 
\begin{align*}
&(L\varphi_{\lambda}, \varphi_{\lambda})_{H} 
+ h(B\varphi_{\lambda}, \varphi_{\lambda})_{H} 
+ h^2 \langle A_{2}^{*}\varphi_{\lambda}, \varphi_{\lambda} \rangle_{V_{2}^{*}, V_{2}}   
+ h^2(\Phi_{\lambda}\varphi_{\lambda}, \varphi_{\lambda})_{H} 
\notag \\ 
&= (g, \varphi_{\lambda})_{H} 
     - h^2({\cal L}\varphi_{\lambda} - {\cal L}0, \varphi_{\lambda})_{H} 
     - h^2({\cal L}0, \varphi_{\lambda})_{H}  
\notag \\ 
&\leq \frac{c_{L}}{2}\|\varphi_{\lambda}\|_{H}^2 
         + \frac{1}{2c_{L}}\|g\|_{H}^2 
         + C_{{\cal L}}h^2\|\varphi_{\lambda}\|_{H}^2 
         + \frac{\|{\cal L}0\|_{H}^2}{2}h^2 
         + \frac{1}{2}h^2\|\varphi_{\lambda}\|_{H}^2.    
\end{align*}
Then, by (C3), (C11), the monotonicity of $B$ and $\Phi_{\lambda}$, 
there exists $h_{1} \in \left(0, \Bigl(\frac{c_{L}}{1+C_{{\cal L}}} \Bigr)^{1/2}\right)$ 
such that for all $h \in (0, h_{1})$ there exists a constant $C_{2} = C_{2}(h) > 0$ 
satisfying 
\begin{align}\label{BJ2}
\|\varphi_{\lambda}\|_{V_{2}}^2 \leq C_{2}   
\end{align}
for all $\lambda>0$. 
We derive from \eqref{BJ1}, (C9) and the Young inequality that 
\begin{align*}
h^2\|\Phi_{\lambda}\varphi_{\lambda}\|_{H}^2 
&= (g, \Phi_{\lambda}\varphi_{\lambda})_{H} 
     - (L\varphi_{\lambda}, \Phi_{\lambda}\varphi_{\lambda})_{H} 
    - h(B\varphi_{\lambda}, \Phi_{\lambda}\varphi_{\lambda})_{H} 
    - h^2(A_{2}\varphi_{\lambda}, \Phi_{\lambda}\varphi_{\lambda})_{H} 
\notag \\ 
&\,\quad- h^2({\cal L}\varphi_{\lambda}, \Phi_{\lambda}\varphi_{\lambda})_{H} 
\notag \\ 
&\leq \frac{3}{2h^2}\|g\|_{H}^2 + \frac{3}{2h^2}\|L\varphi_{\lambda}\|_{H}^2 
         + \frac{3}{2}h^2\|{\cal L}\varphi_{\lambda}\|_{H}^2 
         + \frac{1}{2}h^2\|\Phi_{\lambda}\varphi_{\lambda}\|_{H}^2.  
\end{align*}
Hence, thanks to the boundedness of the operator $L : H \to H$, (C13) and \eqref{BJ2}, 
we can verify that 
for all $h \in (0, h_{1})$ there exists a constant $C_{3} = C_{3}(h) > 0$ such that 
\begin{align}\label{BJ3}
\|\Phi_{\lambda}\varphi_{\lambda}\|_{H}^2 \leq C_{3}   
\end{align}
for all $\lambda>0$. 
We can confirm that 
\begin{align*}
h\|B\varphi_{\lambda}\|_{H}^2 
&= (g, B\varphi_{\lambda})_{H} 
     - (L\varphi_{\lambda}, B\varphi_{\lambda})_{H} 
    - h^2(A_{2}\varphi_{\lambda}, B\varphi_{\lambda})_{H} 
    - h^2(\Phi_{\lambda}\varphi_{\lambda}, B\varphi_{\lambda})_{H} 
\notag \\ 
&\,\quad- h^2({\cal L}\varphi_{\lambda}, B\varphi_{\lambda})_{H} 
\end{align*}
by \eqref{BJ1} 
and then the boundedness of the operator $L : H \to H$, 
(C6), (C9), (C13), the Young inequality and \eqref{BJ2} 
imply that for all $h \in (0, h_{1})$  
there exists a constant $C_{4} = C_{4}(h) > 0$ satisfying 
\begin{align}\label{BJ4}
\|B\varphi_{\lambda}\|_{H}^2 \leq C_{4}(h)   
\end{align}
for all $\lambda>0$. 
We see from \eqref{BJ1}-\eqref{BJ4} that 
for all $h \in (0, h_{1})$ 
there exists a constant $C_{5} = C_{5}(h) > 0$ such that 
\begin{align}\label{BJ5}
\|A_{2}\varphi_{\lambda}\|_{H}^2 \leq C_{5}(h)   
\end{align}
for all $\lambda>0$. 
Thus by \eqref{BJ2}-\eqref{BJ5} 
there exist $\varphi \in D(B) \cap D(A_{2})$ and $\xi \in H$ such that 
\begin{align}
\label{ellipweak1} 
&\varphi_{\lambda} \to \varphi \quad \mbox{weakly in}\ V_{2},  \\ 
\label{ellipweak1'} 
&L\varphi_{\lambda} \to L\varphi \quad \mbox{weakly in}\ H,  \\ 
\label{ellipweak2}
&\Phi_{\lambda}(\varphi_{\lambda}) \to \xi  
                                             \quad \mbox{weakly in}\ H,  \\ 
\label{ellipweak3}
&B\varphi_{\lambda} \to B\varphi \quad \mbox{weakly in}\ H,  \\ 
\label{ellipweak4}
&A_{2}\varphi_{\lambda} \to A_{2}\varphi 
                                              \quad \mbox{weakly in}\ H 
\end{align}
as $\lambda = \lambda_{j} \to +0$. 
Here the inequality \eqref{BJ2}, the convergence \eqref{ellipweak1} and 
the compactness of the embedding $V_{2} \hookrightarrow H$ 
yield that 
\begin{align}\label{ellipstrong} 
\varphi_{\lambda} \to \varphi \quad \mbox{strongly in}\ H
\end{align}
as $\lambda = \lambda_{j} \to +0$. 
Moreover, we have from \eqref{ellipweak2} and \eqref{ellipstrong} that  
$(\Phi_{\lambda}\varphi_{\lambda}, \varphi_{\lambda})_{H} 
\to (\xi, \varphi)_{H}$  
as $\lambda = \lambda_{j} \to +0$. 
Hence the inclusion and the identity 
\begin{align}\label{BJ6}
\varphi \in D(\Phi),\ \xi = \Phi\varphi 
\end{align}
hold (see e.g., \cite[Lemma 1.3, p.\ 42]{Barbu1}). 

Therefore, by virtue of \eqref{BJ1}, \eqref{ellipweak1'}-\eqref{BJ6} and (C13), 
we can check that there exists a solution $\varphi \in D(B) \cap D(A_{2})$ 
of the equation  
\[
L\varphi + hB\varphi + h^2 A_{2}\varphi 
+ h^2 \Phi\varphi + h^2 {\cal L}\varphi = g 
\quad \mbox{in}\ H. 
\] 
Moreover, owing to (C3), (C11), the monotonicity of $B$ and $\Phi$, and (C13), 
the solution $\varphi$ of this problem is unique. 
\end{proof}

\begin{prth1.1}
Let $h_{1}$ be as in Lemma \ref{elliptic2} and let $h \in (0, h_{1})$. 
Then we infer from \eqref{deltah}, the linearity of 
the operators $A_{1}$, $L$, $B$ and $A_{2}$ that 
the problem \ref{Pn} can be written as 
\begin{equation*}\tag*{(Q)$_{n}$}\label{Qn}
     \begin{cases}
      \theta_{n+1} + hA_{1}\theta_{n+1} 
      = \theta_{n} + \varphi_{n} + hf_{n+1} - \varphi_{n+1}, 
      \\[3mm] 
      L\varphi_{n+1} + hB\varphi_{n+1} + h^2A_{2}\varphi_{n+1} 
        + h^2\Phi\varphi_{n+1} + h^2{\cal L}\varphi_{n+1} 
      \\ 
      = L\varphi_{n} + hLv_{n} + hB\varphi_{n} + h^2\theta_{n+1},  
     \end{cases}
\end{equation*}
whence proving Theorem \ref{maintheorem1} is equivalent to 
establish existence and uniqueness of solutions to \ref{Qn} for $n=0, ..., N-1$. 
It suffices to consider the case that $n=0$. 
Thanks to Lemma \ref{elliptic1}, 
we can verify that 
for all $\varphi \in H$ 
there exists a unique solution $\overline{\theta} \in H$ of the equation
\begin{align}\label{popo1}
\overline{\theta} + hA_{1}\overline{\theta} 
= \theta_{0} + \varphi_{0} + hf_{1} - \varphi.   
\end{align}
Also, Lemma \ref{elliptic2} means that 
for all $\theta \in H$ there exists a unique solution $\overline{\varphi} \in H$ 
of the equation  
\begin{align}\label{popo2}
L\overline{\varphi} + hB\overline{\varphi} + h^2A_{2}\overline{\varphi}  
        + h^2\Phi\overline{\varphi} + h^2{\cal L}\overline{\varphi}  
= L\varphi_{0} + hLv_{0} + hB\varphi_{0} + h^2\theta.  
\end{align}
Therefore we can define the operators 
${\cal A} : H \to H$, ${\cal B} : H \to H$ 
and ${\cal S} : H \to H$ as 
$$
{\cal A}(\varphi) = \overline{\theta},\ 
{\cal B}(\theta) = \overline{\varphi} 
\quad \mbox{for}\ \varphi, \theta \in H 
$$
and 
$$
{\cal S} = {\cal B} \circ {\cal A},   
$$
respectively. 
Then we see from \eqref{popo1} and the Young inequality that   
\begin{align*}
&\|{\cal A}\varphi - {\cal A}\zeta\|_{H}^2  
+ h(A_{1}({\cal A}\varphi-{\cal A}\zeta), {\cal A}\varphi - {\cal A}\zeta)_{H} 
\notag \\ 
&= -(\varphi-\zeta, {\cal A}\varphi - {\cal A}\zeta)_{H} 
\leq   \frac{1}{2}\|\varphi-\zeta\|_{H}^2 
          + \frac{1}{2}\|{\cal A}\varphi - {\cal A}\zeta\|_{H}^2   
\end{align*}
for all $\varphi \in H$ and all $\zeta \in H$, 
and hence the inequality 
\begin{align}\label{popo3}
\|{\cal A}\varphi - {\cal A}\zeta\|_{H} 
\leq \|\varphi-\zeta\|_{H} 
\end{align}
holds for all $\varphi \in H$ and all $\zeta \in H$ 
by the monotonicity of $A_{1}$. 
On the other hand, since we derive from \eqref{popo2}, (C13) 
and the Young inequality that 
\begin{align*}
&(L({\cal S}\varphi-{\cal S}\zeta), {\cal S}\varphi-{\cal S}\zeta)_{H} 
+ h(B({\cal S}\varphi-{\cal S}\zeta), {\cal S}\varphi-{\cal S}\zeta)_{H} 
\notag \\ 
&+ h^2(A_{2}({\cal S}\varphi-{\cal S}\zeta), {\cal S}\varphi-{\cal S}\zeta)_{H} 
+ h^2(\Phi{\cal S}\varphi-\Phi{\cal S}\zeta, {\cal S}\varphi-{\cal S}\zeta)_{H} 
\notag \\ 
&= h^2({\cal A}\varphi-{\cal A}\zeta, {\cal S}\varphi-{\cal S}\zeta)_{H} 
     - h^2({\cal L}{\cal S}\varphi-{\cal L}{\cal S}\zeta, 
                                                           {\cal S}\varphi-{\cal S}\zeta)_{H} 
\notag \\ 
&\leq \frac{h^2}{4}\|{\cal A}\varphi-{\cal A}\zeta\|_{H}^2 
         + h^2\|{\cal S}\varphi-{\cal S}\zeta\|_{H}^2 
         + C_{{\cal L}}h^2\|{\cal S}\varphi-{\cal S}\zeta\|_{H}^2 
\end{align*}
for all $\varphi \in H$ and all $\zeta \in H$, 
it follows from (C3), the monotonicity of $B$, $A_{2}$ and $\Phi$ that 
\begin{align}\label{popo4}
\|{\cal S}\varphi - {\cal S}\zeta\|_{H} 
\leq \frac{h}{2(c_{L}  - h^2 - C_{{\cal L}}h^2)^{1/2}}
                                                  \|{\cal A}\varphi - {\cal A}\zeta\|_{H} 
\end{align}
for all $\varphi, \zeta \in H$ and all $h \in (0, h_{1})$. 
Hence, combining \eqref{popo3} and \eqref{popo4}, we have that  
\begin{align*}
\|{\cal S}\varphi - {\cal S}\zeta\|_{H} 
\leq \frac{h}{2(c_{L}  - h^2 - C_{{\cal L}}h^2)^{1/2}}\|\varphi - \zeta\|_{H} 
\end{align*}
for all $\varphi, \zeta \in H$ and all $h \in (0, h_{1})$. 
Therefore 
there exists $h_{0} \in (0, \min\{1, h_{1}\})$ such that 
the operator ${\cal S} : H \to H$ is a contraction  mapping 
for all $h \in (0, h_{0})$. 
Then the Banach fixed-point theorem yields that 
the operator ${\cal S} : H \to H$ has a unique fixed point,  
$\varphi_{1} = {\cal S}\varphi_{1} \in D(B) \cap D(A_{2})$. 
Thus, putting $\theta_{1} := {\cal A}\varphi_{1} \in D(A_{1})$, 
we can conclude that there exists a unique solution $(\theta_{1}, \varphi_{1})$ 
of \ref{Qn} in the case that $n=0$. \qed  
\end{prth1.1}

\vspace{10pt}
 
%%==============================================================%%
%%==============                                  ==============%%
%%======                      Section4                    ======%%
%%====                                                      ====%%
%%==                                                          ==%%
%%==== Uniform estimates and passage to the limit  ====%%
%%======                                                  ======%%
%%==============                                  ==============%%
%%==============================================================%%
\section{Uniform estimates for \ref{Ph} and passage to the limit}\label{Sec4}
In this section we will establish a priori estimates for \ref{Ph}
and will prove Theorem \ref{maintheorem2} 
by passing to the limit in \ref{Ph} as $h\to+0$.

%========Lem.3.1========%
\begin{lem}\label{lemkuri1}
Let $h_{0}$ be as in Theorem \ref{maintheorem1}. 
Then there exist constants 
$h_{2} \in (0, h_{0})$ and $C=C(T)>0$ such that  
\begin{align*}
&\|\overline{v}_{h}\|_{L^{\infty}(0, T; H)}^2 
+ h\|\overline{z}_{h}\|_{L^2(0, T; H)}^2 
+ \|B^{1/2}\overline{v}_{h}\|_{L^2(0, T; H)}^2   
+ \|\overline{\varphi}_{h}\|_{L^{\infty}(0, T; V_{2})}^2 
\\
&+ h\|\overline{v}_{h}\|_{L^2(0, T; V_{2})}^2  
+ \|\overline{\theta}_{h}\|_{L^{\infty}(0, T; H)}^2 
+ h\Bigl\|\frac{d\widehat{\theta}_{h}}{dt}\Bigr\|_{L^2(0, T; H)}^2 
+ \|\overline{\theta}_{h}\|_{L^2(0, T; V_{1})}^2   
\leq C 
\end{align*}
for all $h \in (0, h_{2})$. 
\end{lem}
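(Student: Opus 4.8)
The plan is to run the standard energy method on the discrete scheme (P)$_n$, testing each equation with the natural multiplier and summing over $n$, but being careful about the parabolic-hyperbolic coupling. First I would treat the second (hyperbolic) equation: multiply
$$
Lz_{n+1} + Bv_{n+1} + A_2\varphi_{n+1} + \Phi\varphi_{n+1} + \mathcal{L}\varphi_{n+1} = \theta_{n+1}
$$
in $H$ by $v_{n+1} = \delta_h\varphi_n$ and multiply by $h$. For the inertial term, since $z_{n+1} = \delta_h v_n = (v_{n+1}-v_n)/h$, the identity $h(Lz_{n+1}, v_{n+1})_H = (L(v_{n+1}-v_n),v_{n+1})_H \ge \tfrac12\|L^{1/2}v_{n+1}\|_H^2 - \tfrac12\|L^{1/2}v_n\|_H^2$ (using (C3) and $2(a-b,a)\ge|a|^2-|b|^2$ in the $L^{1/2}$-inner product) produces the telescoping kinetic energy. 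For the damping term $h(Bv_{n+1},v_{n+1})_H = h\|B^{1/2}v_{n+1}\|_H^2$ I keep it as a good (dissipative) term. For $h(A_2\varphi_{n+1},v_{n+1})_H = (A_2^*\varphi_{n+1},\varphi_{n+1}-\varphi_n)_{V_2^*,V_2} \ge \tfrac12\langle A_2^*\varphi_{n+1},\varphi_{n+1}\rangle - \tfrac12\langle A_2^*\varphi_n,\varphi_n\rangle$ by the symmetric-bilinear-form convexity inequality from (C11), giving a telescoping $A_2$-energy. For the $\Phi$-term I use (C8): $h(\Phi\varphi_{n+1},v_{n+1})_H = (\Phi\varphi_{n+1},\varphi_{n+1}-\varphi_n)_H \ge i(\varphi_{n+1}) - i(\varphi_n)$, again telescoping. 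The Lipschitz term $h(\mathcal{L}\varphi_{n+1},v_{n+1})_H$ and the coupling term $h(\theta_{n+1},v_{n+1})_H$ are controlled by Young's inequality against $h\|v_{n+1}\|_H^2$ and $h\|\varphi_{n+1}\|_H^2$ and $h\|\theta_{n+1}\|_H^2$.

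Next I would handle the first (parabolic) equation $\delta_h\theta_n + \delta_h\varphi_n + A_1\theta_{n+1} = f_{n+1}$ by multiplying in $H$ by $h\theta_{n+1}$. The term $h(\delta_h\theta_n,\theta_{n+1})_H = (\theta_{n+1}-\theta_n,\theta_{n+1})_H \ge \tfrac12\|\theta_{n+1}\|_H^2 - \tfrac12\|\theta_n\|_H^2$ telescopes; $h(A_1\theta_{n+1},\theta_{n+1})_H = h\langle A_1^*\theta_{n+1},\theta_{n+1}\rangle_{V_1^*,V_1}$ combines with (C4) (the inequality $\langle A_1^*w,w\rangle + \alpha\|w\|_H^2 \ge \sigma_\alpha\|w\|_{V_1}^2$) to yield control of $h\|\theta_{n+1}\|_{V_1}^2$; the source $h(f_{n+1},\theta_{n+1})_H$ is bounded by $h\|f_{n+1}\|_H^2 + h\|\theta_{n+1}\|_H^2$, and $\sum_n h\|f_{n+1}\|_H^2 \le \|f\|_{L^2(0,T;H)}^2$ by Jensen/Cauchy–Schwarz on the definition of $f_k$. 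The delicate point is the cross term $h(\delta_h\varphi_n,\theta_{n+1})_H = h(v_{n+1},\theta_{n+1})_H$: this is exactly (up to sign) the coupling term that appears in the hyperbolic estimate, so when I add the two estimates these two $h(\theta_{n+1},v_{n+1})_H$ contributions cancel. This cancellation is the structural heart of the argument and mirrors the continuous-level energy identity for the phase-field system.

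After adding the two tested equations and summing over $n = 0,\dots,m-1$ for arbitrary $m\le N$, I obtain
$$
\tfrac12\|L^{1/2}v_m\|_H^2 + \tfrac12\langle A_2^*\varphi_m,\varphi_m\rangle + i(\varphi_m) + \tfrac12\|\theta_m\|_H^2 + \sum_{n=0}^{m-1}h\|B^{1/2}v_{n+1}\|_H^2 + \sum_{n=0}^{m-1}h\langle A_1^*\theta_{n+1},\theta_{n+1}\rangle \le (\text{data}) + C\sum_{n=0}^{m-1}h\bigl(\|v_{n+1}\|_H^2 + \|\varphi_{n+1}\|_H^2 + \|\theta_{n+1}\|_H^2\bigr).
$$
Here the right-hand data term collects $\|L^{1/2}v_0\|_H^2$, $\langle A_2^*\varphi_0,\varphi_0\rangle$, $i(\varphi_0)$, $\|\theta_0\|_H^2$, $\|f\|_{L^2(0,T;H)}^2$, all finite by (C14) and the embedding $V\hookrightarrow D(\Phi)$. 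Using (C11) with a suitable $\alpha$ to absorb $\|\varphi_{n+1}\|_H^2$ into $\langle A_2^*\varphi_{n+1},\varphi_{n+1}\rangle$ (moving a small multiple of the latter to the right is not needed since we bound $\|\varphi\|_{V_2}$ afterward; more simply, estimate $\|\varphi_{n+1}\|_H^2 \le 2\|\varphi_0\|_H^2 + 2(\sum h v)^2 \cdot T$ or just carry it into Gronwall), the discrete Gronwall lemma then gives the $L^\infty$ bounds on $\|v_h\|_H$, $\|\varphi_h\|_{V_2}$ (via (C11) again to pass from $\langle A_2^*\varphi,\varphi\rangle$ to $\|\varphi\|_{V_2}^2$), $\|\theta_h\|_H$, and the $L^2$ bounds on $\|B^{1/2}v_h\|_H$ and $\|\theta_h\|_{V_1}$.

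Finally, for the remaining quantities I argue separately. The bound $h\|\overline{z}_h\|_{L^2(0,T;H)}^2 = \sum_n h^2\|z_{n+1}\|_H^2$: multiply the hyperbolic equation by $hz_{n+1}$ and use $h(Lz_{n+1},z_{n+1})_H \ge c_L h\|z_{n+1}\|_H^2$, bounding the other terms $h(Bv_{n+1},z_{n+1})_H$, $h(A_2\varphi_{n+1},z_{n+1})_H$, etc., by Young against $\tfrac{c_L}{2}h\|z_{n+1}\|_H^2$ plus already-controlled quantities — this uses the $L^2(0,T;D(A_2))$-type bound on $\overline\varphi_h$ and $L^2(0,T;D(B))$-bound on $\overline v_h$, which I would extract in a companion step (analogous to the estimates \eqref{BJ3}--\eqref{BJ5} in Lemma \ref{elliptic2}, now summed in time). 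The term $h\|\overline{v}_h\|_{L^2(0,T;V_2)}^2 = \sum_n h^2\|v_{n+1}\|_{V_2}^2$ follows from multiplying the hyperbolic equation by $hA_2^*v_{n+1}$ (or by $hBv_{n+1}$ depending on which is coercive on $V_2$) together with (C6)'s compatibility $(Bw,A_2w)_H\ge0$ and $(Bw,A_2z)_H=(Bz,A_2w)_H$, which makes the relevant cross terms telescope or vanish; then (C11) converts the resulting bound into the $V_2$-norm. Lastly $h\|d\widehat\theta_h/dt\|_{L^2(0,T;H)}^2 = \sum_n h\|\delta_h\theta_n\|_H^2$: from the parabolic equation $\delta_h\theta_n = f_{n+1} - v_{n+1} - A_1\theta_{n+1}$, so $h\|\delta_h\theta_n\|_H^2 \le 3h(\|f_{n+1}\|_H^2 + \|v_{n+1}\|_H^2 + \|A_1\theta_{n+1}\|_H^2)$, and $\sum_n h\|A_1\theta_{n+1}\|_H^2$ is controlled by another test of the parabolic equation against $hA_1\theta_{n+1}$, using (C4) and the telescoping $h(\delta_h\theta_n, A_1\theta_{n+1})_H = (\theta_{n+1}-\theta_n, A_1^*\theta_{n+1})_{V_1^*,V_1} \ge \tfrac12\langle A_1^*\theta_{n+1},\theta_{n+1}\rangle - \tfrac12\langle A_1^*\theta_n,\theta_n\rangle$, plus Young on the $f$ and $v$ terms — here I need the $L^2(0,T;H)$ bound on $v_h$ already in hand.

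The main obstacle I anticipate is bookkeeping the coupling so that the two $h(\theta_{n+1},v_{n+1})_H$ cross terms cancel exactly (requiring that I test the parabolic equation by $\theta_{n+1}$ and the hyperbolic one by $v_{n+1}$ with consistent signs and the same weight $h$), and — more technically — justifying the parabolic regularity estimate $\sum_n h\|A_1\theta_{n+1}\|_H^2 \le C$ and the $V_2$-estimate on $v_h$, since these require testing with $A_1\theta_{n+1}$ resp.\ $A_2 v_{n+1}$, objects that live in $H$ only because Theorem \ref{maintheorem1} guarantees $\theta_{n+1}\in D(A_1)$ and $\varphi_{n+1}\in D(B)\cap D(A_2)$; the compatibility conditions in (C6) are precisely what make the wave-type higher-order estimate close without picking up an uncontrolled $(Bv_{n+1},A_2\varphi_{n+1})$ term. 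Throughout, the restriction to $h\in(0,h_2)$ with $h_2\le h_0$ is needed both so that (P)$_n$ is solvable (Theorem \ref{maintheorem1}) and so that terms of the form $C h\|\cdot\|^2$ that were moved to the left-hand side (e.g.\ in converting $\langle A_1^*\theta,\theta\rangle + \alpha\|\theta\|_H^2 \ge \sigma_\alpha\|\theta\|_{V_1}^2$) can be absorbed for $h$ small.
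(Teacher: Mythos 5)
Your first-level energy estimate --- test the hyperbolic equation by $hv_{n+1}=\varphi_{n+1}-\varphi_{n}$, the parabolic one by $h\theta_{n+1}$, telescope, and close with the discrete Gronwall lemma --- is exactly the paper's argument, and your treatment of the $L$-, $A_{2}^{*}$-, $\Phi$- (via (C8)), $A_{1}^{*}$- and coupling terms is correct; whether you cancel the two cross terms $\pm h(\theta_{n+1},v_{n+1})_{H}$ exactly or, as the paper does, simply bound each by Young before Gronwall is immaterial. The genuine gap concerns three of the eight quantities in the statement: $h\|\overline{z}_{h}\|_{L^2(0,T;H)}^2$, $h\|\overline{v}_{h}\|_{L^2(0,T;V_{2})}^2$ and $h\|d\widehat{\theta}_{h}/dt\|_{L^2(0,T;H)}^2$. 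In the paper these come for free from the very identities you write down: the discrete integration-by-parts formula is $2(a-b,a)=|a|^2-|b|^2+|a-b|^2$, and the quadratic remainders $\tfrac12\|L^{1/2}(v_{n+1}-v_{n})\|_{H}^2$, $\tfrac12\langle A_{2}^{*}(\varphi_{n+1}-\varphi_{n}),\varphi_{n+1}-\varphi_{n}\rangle_{V_2^*,V_2}+\tfrac12\|\varphi_{n+1}-\varphi_{n}\|_{H}^2$ and $\tfrac12\|\theta_{n+1}-\theta_{n}\|_{H}^2$, once summed over $n$, are precisely $\tfrac{c_{L}}{2}h^2\sum_{n}\|z_{n+1}\|_{H}^2$, at least $\tfrac{\omega_{1}}{2}h^2\sum_{n}\|v_{n+1}\|_{V_{2}}^2$ (by (C11)), and $\tfrac12 h^2\sum_{n}\|\delta_{h}\theta_{n}\|_{H}^2$, which are exactly the three missing norms. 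You explicitly discard these remainders, writing only $2(a-b,a)\ge|a|^2-|b|^2$, and then propose to recover the three bounds by separate, heavier arguments.

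Those substitute arguments do not close at this stage. To bound $h^2\sum_{n}\|z_{n+1}\|_{H}^2$ by testing the hyperbolic equation with $hz_{n+1}$ you must control $h(A_{2}\varphi_{n+1},z_{n+1})_{H}$, i.e.\ you need an $L^2(0,T;D(A_{2}))$-type bound on $\overline{\varphi}_{h}$; and bounding $h^2\sum_{n}\|v_{n+1}\|_{V_{2}}^2$ by testing with $hA_{2}^{*}v_{n+1}$ or $hBv_{n+1}$ is the second-level (differenced) energy estimate. In the paper these higher-order bounds are Lemmas \ref{lemkuri4}--\ref{lemkuri7}, proved \emph{after} and \emph{using} the present lemma (and the separate bound on $z_{1}$), so invoking them here is circular. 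The fix is simply to keep the quadratic remainder terms in the telescoping identities. (Your route to $\sum_{n}h\|\delta_{h}\theta_{n}\|_{H}^2$ via testing with $hA_{1}\theta_{n+1}$ does work with the data already in hand and in fact yields the stronger conclusion of Lemmas \ref{lemkuri2}--\ref{lemkuri3}, but it is an unnecessary detour for the weaker quantity $h\|d\widehat{\theta}_{h}/dt\|_{L^2(0,T;H)}^2$ actually claimed here.)
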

%========Lem.3.1========%
\begin{proof}
Multiplying the second equation in \ref{Pn} 
by $hv_{n+1} (= \varphi_{n+1}-\varphi_{n})$ 
and recalling \eqref{deltah} 
lead to the identity 
\begin{align}\label{a1}
&(L(v_{n+1}-v_{n}), v_{n+1})_{H} + h(Bv_{n+1}, v_{n+1})_{H} 
+ \langle A_{2}^{*}\varphi_{n+1}, \varphi_{n+1}-\varphi_{n} \rangle_{V_{2}^{*}, V_{2}}  
\notag \\ 
&+ (\varphi_{n+1}, \varphi_{n+1}-\varphi_{n})_{H} 
+ (\Phi\varphi_{n+1}, \varphi_{n+1}-\varphi_{n})_{H} 
\notag \\ 
&= h(\theta_{n+1}, v_{n+1})_{H} - h({\cal L}\varphi_{n+1}, v_{n+1})_{H} 
     + h(\varphi_{n+1}, v_{n+1})_{H}. 
\end{align}
Here we infer that 
\begin{align}\label{a2}
&(L(v_{n+1}-v_{n}), v_{n+1})_{H} 
\notag \\ 
&= (L^{1/2}(v_{n+1}-v_{n}), L^{1/2}v_{n+1})_{H} 
\notag \\ 
&=\frac{1}{2}\|L^{1/2}v_{n+1}\|_{H}^2 - \frac{1}{2}\|L^{1/2}v_{n}\|_{H}^2 
    + \frac{1}{2}\|L^{1/2}(v_{n+1}-v_{n})\|_{H}^2  
\end{align}
and 
\begin{align}\label{a3}
&\langle A_{2}^{*}\varphi_{n+1}, \varphi_{n+1}-\varphi_{n} \rangle_{V_{2}^{*}, V_{2}}  
+ (\varphi_{n+1}, \varphi_{n+1}-\varphi_{n})_{H} 
\notag \\ 
&= \frac{1}{2}\langle A_{2}^{*}\varphi_{n+1}, \varphi_{n+1} \rangle_{V_{2}^{*}, V_{2}} 
     - \frac{1}{2}\langle A_{2}^{*}\varphi_{n}, \varphi_{n} \rangle_{V_{2}^{*}, V_{2}} 
     + \frac{1}{2}\langle A_{2}^{*}(\varphi_{n+1}-\varphi_{n}), 
                                               \varphi_{n+1}-\varphi_{n} \rangle_{V_{2}^{*}, V_{2}}  
\notag \\ 
&\,\quad + \frac{1}{2}\|\varphi_{n+1}\|_{H}^2 - \frac{1}{2}\|\varphi_{n}\|_{H}^2 
    + \frac{1}{2}\|\varphi_{n+1}-\varphi_{n}\|_{H}^2. 
\end{align}
Hence we deduce 
from \eqref{a1}-\eqref{a3}, (C8), (C13), 
the continuity of the embedding $V_{2} \hookrightarrow H$ 
and the Young inequality that 
there exist constants $C_{1}, C_{2}>0$ such that 
\begin{align}\label{a4}
&\frac{1}{2}\|L^{1/2}v_{n+1}\|_{H}^2 - \frac{1}{2}\|L^{1/2}v_{n}\|_{H}^2 
    + \frac{1}{2}\|L^{1/2}(v_{n+1}-v_{n})\|_{H}^2 
    + h\|B^{1/2}v_{n+1}\|_{H}^2  
\notag \\
&+ \frac{1}{2}\langle A_{2}^{*}\varphi_{n+1}, \varphi_{n+1} \rangle_{V_{2}^{*}, V_{2}} 
     - \frac{1}{2}\langle A_{2}^{*}\varphi_{n}, \varphi_{n} \rangle_{V_{2}^{*}, V_{2}} 
     + \frac{1}{2}\langle A_{2}^{*}(\varphi_{n+1}-\varphi_{n}), 
                                               \varphi_{n+1}-\varphi_{n} \rangle_{V_{2}^{*}, V_{2}}  
\notag \\ 
&+ \frac{1}{2}\|\varphi_{n+1}\|_{H}^2 - \frac{1}{2}\|\varphi_{n}\|_{H}^2 
    + \frac{1}{2}\|\varphi_{n+1}-\varphi_{n}\|_{H}^2 
   + i(\varphi_{n+1}) - i(\varphi_{n}) 
\notag 
\\
&\leq \frac{1}{2}h\|\theta_{n+1}\|_{H}^2 + \frac{3}{2}h\|v_{n+1}\|_{H}^2 
        + C_{1}h\|\varphi_{n+1}\|_{V_{2}}^2 + C_{2}h
\end{align}
for all $h \in (0, h_{0})$. 
On the other hand, multiplying the first equation in \ref{Pn} by $h\theta_{n+1}$, 
we see from the Young inequality that 
\begin{align}\label{a5}
&\frac{1}{2}\|\theta_{n+1}\|_{H}^2 - \frac{1}{2}\|\theta_{n}\|_{H}^2 
    + \frac{1}{2}\|\theta_{n+1}-\theta_{n}\|_{H}^2 
+ h(A_{1}\theta_{n+1}, \theta_{n+1})_{H} 
\notag \\ 
&= h(f_{n+1}, \theta_{n+1})_{H} - h(v_{n+1}, \theta_{n+1})_{H} 
\notag \\ 
&\leq \frac{1}{2}h\|f_{n+1}\|_{H}^2 + \frac{1}{2}h\|v_{n+1}\|_{H}^2 
         + h\|\theta_{n+1}\|_{H}^2.  
\end{align}
Thus combining \eqref{a4} and \eqref{a5} implies that 
\begin{align}\label{a6}
&\frac{1}{2}\|L^{1/2}v_{n+1}\|_{H}^2 - \frac{1}{2}\|L^{1/2}v_{n}\|_{H}^2 
    + \frac{1}{2}\|L^{1/2}(v_{n+1}-v_{n})\|_{H}^2 + h\|B^{1/2}v_{n+1}\|_{H}^2  
\notag \\
&+ \frac{1}{2}\langle A_{2}^{*}\varphi_{n+1}, \varphi_{n+1} \rangle_{V_{2}^{*}, V_{2}} 
     - \frac{1}{2}\langle A_{2}^{*}\varphi_{n}, \varphi_{n} \rangle_{V_{2}^{*}, V_{2}} 
     + \frac{1}{2}\langle A_{2}^{*}(\varphi_{n+1}-\varphi_{n}), 
                                               \varphi_{n+1}-\varphi_{n} \rangle_{V_{2}^{*}, V_{2}}  
\notag \\ 
&+ \frac{1}{2}\|\varphi_{n+1}\|_{H}^2 - \frac{1}{2}\|\varphi_{n}\|_{H}^2 
    + \frac{1}{2}\|\varphi_{n+1}-\varphi_{n}\|_{H}^2 
   + i(\varphi_{n+1}) - i(\varphi_{n}) 
\notag \\ 
&+\frac{1}{2}\|\theta_{n+1}\|_{H}^2 - \frac{1}{2}\|\theta_{n}\|_{H}^2 
    + \frac{1}{2}\|\theta_{n+1}-\theta_{n}\|_{H}^2 
+ h(A_{1}\theta_{n+1}, \theta_{n+1})_{H} 
\notag \\
&\leq \frac{1}{2}h\|f_{n+1}\|_{H}^2 + \frac{3}{2}h\|\theta_{n+1}\|_{H}^2 
        + 2h\|v_{n+1}\|_{H}^2 
        + C_{1}h\|\varphi_{n+1}\|_{V_{2}}^2 + C_{2}h. 
\end{align}
Moreover, we sum \eqref{a6} over $n=0, ..., m-1$ with $1 \leq m \leq N$ 
to obtain the inequality  
\begin{align}\label{a7}
&\frac{1}{2}\|L^{1/2}v_{m}\|_{H}^2 
+ \frac{1}{2}\sum_{n=0}^{m-1}\|L^{1/2}(v_{n+1}-v_{n})\|_{H}^2 
+ h\sum_{n=0}^{m-1}\|B^{1/2}v_{n+1}\|_{H}^2  
+ \frac{1}{2}\langle A_{2}^{*}\varphi_{m}, \varphi_{m} \rangle_{V_{2}^{*}, V_{2}} 
\notag \\
&+ \frac{1}{2}\|\varphi_{m}\|_{H}^2 
+ \frac{1}{2}\sum_{n=0}^{m-1}\langle A_{2}^{*}(\varphi_{n+1}-\varphi_{n}), 
                                               \varphi_{n+1}-\varphi_{n} \rangle_{V_{2}^{*}, V_{2}} 
+ \frac{1}{2}\sum_{n=0}^{m-1}\|\varphi_{n+1}-\varphi_{n}\|_{H}^2   
\notag \\
&+ i(\varphi_{m}) + \frac{1}{2}\|\theta_{m}\|_{H}^2 
+ \frac{1}{2}\sum_{n=0}^{m-1}\|\theta_{n+1}-\theta_{n}\|_{H}^2
+ h\sum_{n=0}^{m-1}(A_{1}\theta_{n+1}, \theta_{n+1})_{H} 
\notag \\ 
&\leq \frac{1}{2}\|L^{1/2}v_{0}\|_{H}^2 
   + \frac{1}{2}\langle A_{2}^{*}\varphi_{0}, \varphi_{0} \rangle_{V_{2}^{*}, V_{2}} 
   + \frac{1}{2}\|\varphi_{0}\|_{H}^2 + i(\varphi_{0}) 
   + \frac{1}{2}\|\theta_{0}\|_{H}^2 
   + \frac{1}{2}h\sum_{n=0}^{m-1}\|f_{n+1}\|_{H}^2 
\notag \\
&\,\quad+ \frac{3}{2}h\sum_{n=0}^{m-1}\|\theta_{n+1}\|_{H}^2 
        + 2h\sum_{n=0}^{m-1}\|v_{n+1}\|_{H}^2 
        + C_{1}h\sum_{n=0}^{m-1}\|\varphi_{n+1}\|_{V_{2}}^2 + C_{2}T. 
\end{align}
Here, owing to (C11), it holds that 
\begin{align}\label{a8}
\frac{1}{2}\langle A_{2}^{*}\varphi_{m}, \varphi_{m} \rangle_{V_{2}^{*}, V_{2}} 
+ \frac{1}{2}\|\varphi_{m}\|_{H}^2 
\geq \frac{\omega_{1}}{2}\|\varphi_{m}\|_{V_{2}}^2 
\end{align}
and 
\begin{align}\label{a9}
&\frac{1}{2}\sum_{n=0}^{m-1}\langle A_{2}^{*}(\varphi_{n+1}-\varphi_{n}), 
                                              \varphi_{n+1}-\varphi_{n} \rangle_{V_{2}^{*}, V_{2}} 
+ \frac{1}{2}\sum_{n=0}^{m-1}\|\varphi_{n+1}-\varphi_{n}\|_{H}^2   
\notag \\ 
&\geq \frac{\omega_{1}}{2}\sum_{n=0}^{m-1}\|\varphi_{n+1}-\varphi_{n}\|_{V_{2}}^2 
= \frac{\omega_{1}}{2}h^2\sum_{n=0}^{m-1}\|v_{n+1}\|_{V_{2}}^2.  
\end{align}
Also, we see from (C4) that 
\begin{align}\label{a10}
h\sum_{n=0}^{m-1}(A_{1}\theta_{n+1}, \theta_{n+1})_{H}  
&= h\sum_{n=0}^{m-1}\langle A_{1}^{*}\theta_{n+1}, 
                                                      \theta_{n+1} \rangle_{V_{1}^{*}, V_{1}} 
\notag \\
&\geq \sigma_{1}h\sum_{n=0}^{m-1}\|\theta_{n+1}\|_{V_{1}}^2 
         - h\sum_{n=0}^{m-1}\|\theta_{n+1}\|_{H}^2.  
\end{align}
Hence it follows from \eqref{a7}-\eqref{a10} and (C3) that 
\begin{align*}%\label{a11}
&\left(\frac{c_{L}}{2}-2h\right)\|v_{m}\|_{H}^2 
+ \frac{c_{L}}{2}h^2\sum_{n=0}^{m-1}\|z_{n+1}\|_{H}^2 
+ h\sum_{n=0}^{m-1}\|B^{1/2}v_{n+1}\|_{H}^2  
+ \left(\frac{\omega_{1}}{2} - C_{1}h \right)\|\varphi_{m}\|_{V_{2}}^2 
\notag \\
&+ \frac{\omega_{1}}{2}h^2\sum_{n=0}^{m-1}\|v_{n+1}\|_{V_{2}}^2   
+ \frac{1}{2}(1-5h)\|\theta_{m}\|_{H}^2 
+ \frac{1}{2}h^2\sum_{n=0}^{m-1}\|\delta_{h}\theta_{n}\|_{H}^2 
+ \sigma_{1}h\sum_{n=0}^{m-1}\|\theta_{n+1}\|_{V_{1}}^2 
\notag \\
&\leq \frac{1}{2}\|L^{1/2}v_{0}\|_{H}^2 
   + \frac{1}{2}\langle A_{2}^{*}\varphi_{0}, \varphi_{0} \rangle_{V_{2}^{*}, V_{2}} 
   + \frac{1}{2}\|\varphi_{0}\|_{H}^2 + i(\varphi_{0}) 
   + \frac{1}{2}\|\theta_{0}\|_{H}^2 
   + \frac{1}{2}h\sum_{n=0}^{m-1}\|f_{n+1}\|_{H}^2 
\notag \\
&\,\quad+ \frac{5}{2}h\sum_{j=0}^{m-1}\|\theta_{j}\|_{H}^2 
        + 2h\sum_{j=0}^{m-1}\|v_{j}\|_{H}^2 
        + C_{1}h\sum_{j=0}^{m-1}\|\varphi_{j}\|_{V_{2}}^2 + C_{2}T
\end{align*}
and then there exist constants 
$h_{2} \in (0, h_{0})$ and $C_{3} = C_{3}(T) > 0$ such that 
\begin{align}\label{a12}
&\|v_{m}\|_{H}^2 
+ h^2\sum_{n=0}^{m-1}\|z_{n+1}\|_{H}^2 
+ h\sum_{n=0}^{m-1}\|B^{1/2}v_{n+1}\|_{H}^2  
\notag \\ 
&+ \|\varphi_{m}\|_{V_{2}}^2 
+ h^2\sum_{n=0}^{m-1}\|v_{n+1}\|_{V_{2}}^2   
+ \|\theta_{m}\|_{H}^2 
+ h^2\sum_{n=0}^{m-1}\|\delta_{h}\theta_{n}\|_{H}^2
+ h\sum_{n=0}^{m-1}\|\theta_{n+1}\|_{V_{1}}^2 
\notag \\
&\leq  C_{3} h\sum_{j=0}^{m-1}\|\theta_{j}\|_{H}^2 
        + C_{3}h\sum_{j=0}^{m-1}\|v_{j}\|_{H}^2 
        + C_{3}h\sum_{j=0}^{m-1}\|\varphi_{j}\|_{V_{2}}^2 + C_{3}
\end{align}
for all $h \in (0, h_{2})$. 
Therefore the inequality \eqref{a12} and 
the discrete Gronwall lemma (see e.g., \cite[Prop.\ 2.2.1]{Jerome}) imply that 
there exists a constant $C_{4} = C_{4}(T) > 0$ such that 
\begin{align*}
&\|v_{m}\|_{H}^2 
+ h^2\sum_{n=0}^{m-1}\|z_{n+1}\|_{H}^2 
+ h\sum_{n=0}^{m-1}\|B^{1/2}v_{n+1}\|_{H}^2  
\notag \\ 
&+ \|\varphi_{m}\|_{V_{2}}^2 
+ h^2\sum_{n=0}^{m-1}\|v_{n+1}\|_{V_{2}}^2   
+ \|\theta_{m}\|_{H}^2 
+ h^2\sum_{n=0}^{m-1}\|\delta_{h}\theta_{n}\|_{H}^2 
+ h\sum_{n=0}^{m-1}\|\theta_{n+1}\|_{V_{1}}^2 
\leq  C_{4}
\end{align*}
for all $h \in (0, h_{2})$ and $m=1,..., N$.

\end{proof}

%========Lem.3.2========%
\begin{lem}\label{lemkuri2}
Let $h_{2}$ be as in Lemma \ref{lemkuri1}. 
Then there exists a constant $C=C(T)>0$ such that  
\begin{align*}
\Bigl\|\dfrac{d\widehat{\theta}_{h}}{dt}\Bigr\|_{L^2(0, T; H)}^2 
+ h\Bigl\|\dfrac{d\widehat{\theta}_{h}}{dt}\Bigr\|_{L^2(0, T; V_{1})}^2 
+ \|\overline{\theta}_{h}\|_{L^{\infty}(0, T; V_{1})}^2 
\leq C 
\end{align*}
for all $h \in (0, h_{2})$. 
\end{lem}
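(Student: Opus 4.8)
The plan is to obtain the two new bounds by testing the first line of \ref{Pn}, which upon recalling $v_{n+1}=\delta_h\varphi_n$ reads
\[
\delta_h\theta_n + A_1\theta_{n+1} = f_{n+1} - v_{n+1} \quad\text{in } H \qquad (n=0,\dots,N-1),
\]
against $h\,\delta_h\theta_n=\theta_{n+1}-\theta_n$ in $H$; this is legitimate since $\theta_{n+1}\in D(A_1)$ by Theorem \ref{maintheorem1}. The term $h\|\delta_h\theta_n\|_H^2$ is produced directly, and for the operator term I would use the discrete chain-rule identity
\begin{align*}
(A_1\theta_{n+1},\theta_{n+1}-\theta_n)_H
&=\tfrac12\langle A_1^{*}\theta_{n+1},\theta_{n+1}\rangle_{V_1^{*},V_1}
-\tfrac12\langle A_1^{*}\theta_{n},\theta_{n}\rangle_{V_1^{*},V_1}\\
&\quad+\tfrac12\langle A_1^{*}(\theta_{n+1}-\theta_n),\theta_{n+1}-\theta_n\rangle_{V_1^{*},V_1},
\end{align*}
which follows from the selfadjointness and symmetry of $A_1$, $A_1^{*}$ in (C4) together with $A_1^{*}=A_1$ on $D(A_1)$. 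Phrasing the quadratic terms through the $V_1$--$V_1^{*}$ duality pairing is exactly what keeps the $n=0$ step meaningful, since there only $\theta_0\in V_1$ (not $\theta_0\in D(A_1)$) is available.

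Summing over $n=0,\dots,m-1$ for $1\le m\le N$ then produces an identity of the form
\begin{align*}
&h\sum_{n=0}^{m-1}\|\delta_h\theta_n\|_H^2
+\tfrac12\langle A_1^{*}\theta_m,\theta_m\rangle_{V_1^{*},V_1}
+\tfrac12\sum_{n=0}^{m-1}\langle A_1^{*}(\theta_{n+1}-\theta_n),\theta_{n+1}-\theta_n\rangle_{V_1^{*},V_1}\\
&\qquad=\tfrac12\langle A_1^{*}\theta_0,\theta_0\rangle_{V_1^{*},V_1}
+h\sum_{n=0}^{m-1}(f_{n+1}-v_{n+1},\delta_h\theta_n)_H.
\end{align*}
On the right-hand side $\langle A_1^{*}\theta_0,\theta_0\rangle_{V_1^{*},V_1}$ is finite by the boundedness of $A_1^{*}$ in (C4) and by (C14); after the Young inequality the last sum is bounded by $\tfrac12 h\sum_n\|f_{n+1}-v_{n+1}\|_H^2+\tfrac12 h\sum_n\|\delta_h\theta_n\|_H^2$, where the first term is controlled by $\|f\|_{L^2(0,T;H)}^2$ (using the definition of $f_{n+1}$ and the Jensen inequality) together with $T\|\overline{v}_h\|_{L^\infty(0,T;H)}^2$, hence by a constant thanks to Lemma \ref{lemkuri1}, while the second term is absorbed into the left-hand side. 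Note that no discrete Gronwall lemma is needed here, precisely because $\overline{v}_h$ has already been estimated in $L^\infty(0,T;H)$. This already yields $\|d\widehat{\theta}_h/dt\|_{L^2(0,T;H)}^2=h\sum_n\|\delta_h\theta_n\|_H^2\le C$, together with the uniform bounds $\langle A_1^{*}\theta_m,\theta_m\rangle_{V_1^{*},V_1}\le C$ for every $m$ and $\sum_{n=0}^{N-1}\langle A_1^{*}(\theta_{n+1}-\theta_n),\theta_{n+1}-\theta_n\rangle_{V_1^{*},V_1}\le C$.

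It then remains to upgrade these quadratic-form bounds to $V_1$-norm bounds by means of the coercivity estimate in (C4) (with $\alpha=1$). Combining $\langle A_1^{*}\theta_m,\theta_m\rangle_{V_1^{*},V_1}\le C$ with $\|\theta_m\|_H^2\le C$ from Lemma \ref{lemkuri1} gives $\|\theta_m\|_{V_1}^2\le C$ for all $m$, i.e.\ $\|\overline{\theta}_h\|_{L^\infty(0,T;V_1)}^2\le C$; similarly, combining $\sum_n\langle A_1^{*}(\theta_{n+1}-\theta_n),\theta_{n+1}-\theta_n\rangle_{V_1^{*},V_1}\le C$ with $\sum_n\|\theta_{n+1}-\theta_n\|_H^2=h^2\sum_n\|\delta_h\theta_n\|_H^2\le C$ (again Lemma \ref{lemkuri1}) gives $\sum_n\|\theta_{n+1}-\theta_n\|_{V_1}^2\le C$, i.e.\ $h^2\sum_n\|\delta_h\theta_n\|_{V_1}^2\le C$, which is exactly $h\|d\widehat{\theta}_h/dt\|_{L^2(0,T;V_1)}^2\le C$. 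The main point requiring care is not any deep estimate but the bookkeeping around $A_1^{*}$ versus $A_1$ and the endpoint $n=0$, where $\theta_0\notin D(A_1)$ in general; working consistently with the $V_1$--$V_1^{*}$ duality pairing throughout, as in \cite{CF1996}, disposes of this.
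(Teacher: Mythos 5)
Your proposal is correct and follows essentially the same route as the paper: multiply the first equation of \ref{Pn} by $\theta_{n+1}-\theta_{n}$, use the discrete chain-rule identity for $\langle A_{1}^{*}\theta_{n+1},\theta_{n+1}-\theta_{n}\rangle_{V_{1}^{*},V_{1}}$, absorb the $\frac12 h\|\delta_h\theta_n\|_H^2$ term after Young's inequality, sum over $n$, and conclude via the coercivity in (C4) together with Lemma \ref{lemkuri1}. Your added remarks on working with the $V_{1}$--$V_{1}^{*}$ pairing to handle $\theta_{0}\in V_{1}\setminus D(A_{1})$ and on converting the quadratic-form bounds to $V_{1}$-norm bounds just make explicit what the paper leaves implicit.
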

%========Lem.3.2========%
\begin{proof}
Multiplying the first equation in \ref{Pn} by $\theta_{n+1}-\theta_{n}$ 
and using the Young inequality mean that 
\begin{align}\label{hoho1}
&h\left\|\frac{\theta_{n+1}-\theta_{n}}{h}\right\|_{H}^2 
+ \langle A_{1}^{*}\theta_{n+1}, \theta_{n+1}-\theta_{n} \rangle_{V_{1}^{*}, V_{1}} 
= h\left(f_{n+1} - v_{n+1}, \frac{\theta_{n+1}-\theta_{n}}{h}\right)_{H} 
\notag \\ 
&\leq h\|f_{n+1}\|_{H}^2 + h\|v_{n+1}\|_{H}^2  
+ \frac{1}{2}h\left\|\frac{\theta_{n+1}-\theta_{n}}{h}\right\|_{H}^2.  
\end{align}
Here we derive that 
\begin{align}\label{hoho2}
\langle A_{1}^{*}\theta_{n+1}, \theta_{n+1}-\theta_{n} \rangle_{V_{1}^{*}, V_{1}}  
&= \frac{1}{2}\langle A_{1}^{*}\theta_{n+1}, \theta_{n+1} \rangle_{V_{1}^{*}, V_{1}} 
     - \frac{1}{2}\langle A_{1}^{*}\theta_{n}, \theta_{n} \rangle_{V_{1}^{*}, V_{1}} 
\notag \\ 
&\,\quad+ \frac{1}{2}\langle A_{1}^{*}(\theta_{n+1}-\theta_{n}), 
                                                 \theta_{n+1}-\theta_{n} \rangle_{V_{1}^{*}, V_{1}}. 
\end{align}
Thus, combining \eqref{hoho1} and \eqref{hoho2}, we have 
\begin{align}\label{hoho3}
&\frac{1}{2}h\left\|\frac{\theta_{n+1}-\theta_{n}}{h}\right\|_{H}^2 
+ \frac{1}{2}\langle A_{1}^{*}\theta_{n+1}, \theta_{n+1} \rangle_{V_{1}^{*}, V_{1}} 
     - \frac{1}{2}\langle A_{1}^{*}\theta_{n}, \theta_{n} \rangle_{V_{1}^{*}, V_{1}} 
\notag \\ 
&+ \frac{1}{2}\langle A_{1}^{*}(\theta_{n+1}-\theta_{n}), 
                                                  \theta_{n+1}-\theta_{n} \rangle_{V_{1}^{*}, V_{1}} 
\leq h\|f_{n+1}\|_{H}^2 + h\|v_{n+1}\|_{H}^2.  
\end{align} 
Therefore summing \eqref{hoho3} over $n=0, ..., N-1$ with $1 \leq m \leq N$, 
the condition (C4) and Lemma \ref{lemkuri1} lead to Lemma \ref{lemkuri2}.  
\end{proof}

%========Lem.3.3========%
\begin{lem}\label{lemkuri3}
Let $h_{2}$ be as in Lemma \ref{lemkuri1}. 
Then there exists a constant $C=C(T)>0$ such that  
\begin{align*}
 \|A_{1}\overline{\theta}_{h}\|_{L^2(0, T; H)}^2 
\leq C 
\end{align*}
for all $h \in (0, h_{2})$. 
\end{lem}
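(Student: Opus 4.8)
The plan is to estimate $A_{1}\overline{\theta}_{h}$ directly from the first equation of \ref{Pn}, which reads $A_{1}\theta_{n+1} = f_{n+1} - \delta_{h}\theta_{n} - \delta_{h}\varphi_{n} = f_{n+1} - \delta_{h}\theta_{n} - v_{n+1}$, using that $\delta_{h}\varphi_{n} = v_{n+1}$ by the last line of \ref{Pn}. In terms of the interpolants \eqref{hat1}--\eqref{overandunderline} this is exactly
\begin{align*}
A_{1}\overline{\theta}_{h} = \overline{f}_{h} - \frac{d\widehat{\theta}_{h}}{dt} - \overline{v}_{h}
\quad\mbox{in}\ H \quad\mbox{a.e.\ on}\ (0,T).
\end{align*}
First I would take the $H$-norm of both sides, square, and integrate over $(0,T)$, using the elementary inequality $\|a-b-c\|_{H}^2 \leq 3(\|a\|_{H}^2 + \|b\|_{H}^2 + \|c\|_{H}^2)$ to obtain
\begin{align*}
\|A_{1}\overline{\theta}_{h}\|_{L^2(0,T;H)}^2
\leq 3\|\overline{f}_{h}\|_{L^2(0,T;H)}^2
+ 3\Bigl\|\frac{d\widehat{\theta}_{h}}{dt}\Bigr\|_{L^2(0,T;H)}^2
+ 3\|\overline{v}_{h}\|_{L^2(0,T;H)}^2.
\end{align*}

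Each of the three terms on the right is then controlled by known quantities. For the source term, from $f_{k} = \frac{1}{h}\int_{(k-1)h}^{kh}f(s)\,ds$ and Jensen's (or the Cauchy--Schwarz) inequality one gets $h\|f_{k}\|_{H}^2 \leq \int_{(k-1)h}^{kh}\|f(s)\|_{H}^2\,ds$, hence $\|\overline{f}_{h}\|_{L^2(0,T;H)}^2 = h\sum_{k=1}^{N}\|f_{k}\|_{H}^2 \leq \|f\|_{L^2(0,T;H)}^2$, which is finite by (C14). The second term is bounded by Lemma \ref{lemkuri2}, which gives $\bigl\|\frac{d\widehat{\theta}_{h}}{dt}\bigr\|_{L^2(0,T;H)}^2 \leq C$. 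The third term is bounded by Lemma \ref{lemkuri1}, which gives $\|\overline{v}_{h}\|_{L^{\infty}(0,T;H)}^2 \leq C$, and since $(0,T)$ has finite measure this yields $\|\overline{v}_{h}\|_{L^2(0,T;H)}^2 \leq T\,C$. Combining these three bounds produces a constant $C = C(T) > 0$, independent of $h \in (0,h_{2})$, with $\|A_{1}\overline{\theta}_{h}\|_{L^2(0,T;H)}^2 \leq C$, as claimed.

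This lemma is essentially a bootstrap/elliptic-regularity-type consequence of the previously established uniform estimates, so there is no serious obstacle: the only points requiring a little care are (i) correctly rewriting $A_{1}\theta_{n+1}$ from \ref{Pn} — in particular remembering that $\delta_{h}\varphi_{n}$ equals $v_{n+1}$ and not $v_{n}$ — and (ii) the routine $L^2$-in-time bound on the piecewise-constant average $\overline{f}_{h}$ of $f$. Everything else is an immediate application of Lemmas \ref{lemkuri1} and \ref{lemkuri2}.
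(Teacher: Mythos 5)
Your proposal is correct and follows exactly the same route as the paper, which simply notes that the lemma follows from the first equation in \ref{Ph} together with Lemmas \ref{lemkuri1} and \ref{lemkuri2}; you have merely written out the details (the identity $A_{1}\overline{\theta}_{h} = \overline{f}_{h} - \frac{d\widehat{\theta}_{h}}{dt} - \overline{v}_{h}$, the Jensen bound on $\overline{f}_{h}$, and the two uniform estimates) that the paper leaves implicit.
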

%========Lem.3.3========%
\begin{proof}
This lemma holds by the first equation in \ref{Ph}, 
Lemmas \ref{lemkuri1} and \ref{lemkuri2}. 
\end{proof}

%========Lem.3.4========%
\begin{lem}\label{lemkuri4}
Let $h_{2}$ be as in Lemma \ref{lemkuri1}. 
Then there exists a constant $C=C(T)>0$ such that  
\begin{align*}
\|z_{1}\|_{H}^2 + h\|B^{1/2}z_{1}\|_{H}^2 
+ \|v_{1}\|_{V_{2}}^2 + h^2\|z_{1}\|_{V_{2}}^2 
\leq C 
\end{align*}
for all $h\in(0, h_{2})$. 
\end{lem}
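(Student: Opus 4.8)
The plan is to test the second identity in \ref{Pn} for $n=0$, that is
\[
Lz_{1} + Bv_{1} + A_{2}\varphi_{1} + \Phi\varphi_{1} + {\cal L}\varphi_{1} = \theta_{1}
\quad \mbox{in}\ H
\]
(which is meaningful because $\varphi_{1} \in D(B) \cap D(A_{2})$, hence $v_{1} = \delta_{h}\varphi_{0} \in D(B) \cap D(A_{2})$ and $z_{1} = \delta_{h}v_{0} \in D(B)$, by Theorem \ref{maintheorem1}), with the test function $z_{1} = \frac{v_{1}-v_{0}}{h} \in H$, and then to absorb the singular factors $1/h$ produced by this test using the extra regularity $\varphi_{0} \in D(B) \cap D(A_{2})$, $v_{0} \in D(B) \cap V_{2}$ granted by (C14) together with the bounds already available from Lemma \ref{lemkuri1}.

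First I would treat the easy scalar products. By (C3), $(Lz_{1}, z_{1})_{H} \geq c_{L}\|z_{1}\|_{H}^2$. For the damping term, since $v_{0}, v_{1}$ and $v_{1}-v_{0}=hz_{1}$ all lie in $D(B)$, linearity and selfadjointness of $B$ give $(Bv_{1}, z_{1})_{H} = (Bv_{0}, z_{1})_{H} + h\|B^{1/2}z_{1}\|_{H}^2 \geq -\|Bv_{0}\|_{H}\|z_{1}\|_{H} + h\|B^{1/2}z_{1}\|_{H}^2$, which produces exactly the wanted term $h\|B^{1/2}z_{1}\|_{H}^2$ with a harmless remainder, as $\|Bv_{0}\|_{H}$ is a fixed constant. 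For the elliptic term, writing $A_{2}\varphi_{1} = A_{2}\varphi_{0} + hA_{2}v_{1}$ and $h(A_{2}v_{1}, z_{1})_{H} = (A_{2}v_{1}, v_{1})_{H} - (A_{2}v_{1}, v_{0})_{H}$, I would use (C11) in the form $(A_{2}v_{1}, v_{1})_{H} = \langle A_{2}^{*}v_{1}, v_{1} \rangle_{V_{2}^{*}, V_{2}} \geq \omega_{1}\|v_{1}\|_{V_{2}}^2 - \|v_{1}\|_{H}^2$, and the symmetry and boundedness of $A_{2}^{*}$ to get $|(A_{2}v_{1}, v_{0})_{H}| = |\langle A_{2}^{*}v_{0}, v_{1}\rangle_{V_{2}^{*}, V_{2}}| \leq C\|v_{1}\|_{V_{2}}$; since $\|v_{1}\|_{H} \leq C$ and $\|A_{2}\varphi_{0}\|_{H}$ is a constant by Lemma \ref{lemkuri1} and (C14), a Young inequality yields $(A_{2}\varphi_{1}, z_{1})_{H} \geq \frac{\omega_{1}}{2}\|v_{1}\|_{V_{2}}^2 - \|A_{2}\varphi_{0}\|_{H}\|z_{1}\|_{H} - C$. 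Finally $|({\cal L}\varphi_{1}, z_{1})_{H}| + |(\theta_{1}, z_{1})_{H}| \leq (\|{\cal L}\varphi_{1}\|_{H} + \|\theta_{1}\|_{H})\|z_{1}\|_{H} \leq C\|z_{1}\|_{H}$ by (C13) and Lemma \ref{lemkuri1}.

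The delicate term is $(\Phi\varphi_{1}, z_{1})_{H}$. Expanding $z_{1}$ and $v_{1}=\frac{\varphi_{1}-\varphi_{0}}{h}$ and using monotonicity of $\Phi$ through $(\Phi\varphi_{1}, \varphi_{1}-\varphi_{0})_{H} \geq (\Phi\varphi_{0}, \varphi_{1}-\varphi_{0})_{H}$, one rewrites, after a short computation in which the singular terms $\frac{1}{h}(\Phi\varphi_{0}, v_{0})_{H}$ cancel,
\[
(\Phi\varphi_{1}, z_{1})_{H} \geq (\Phi\varphi_{0}, z_{1})_{H} - \frac{1}{h}(\Phi\varphi_{1}-\Phi\varphi_{0}, v_{0})_{H}.
\]
Then the growth condition (C7) gives $\frac{1}{h}\|\Phi\varphi_{1}-\Phi\varphi_{0}\|_{H} \leq C_{\Phi}(1+\|\varphi_{1}\|_{V}^p + \|\varphi_{0}\|_{V}^q)\|v_{1}\|_{V} \leq C\|v_{1}\|_{V_{2}}$, since $\|\varphi_{1}\|_{V} \leq C\|\varphi_{1}\|_{V_{2}} \leq C$ by Lemma \ref{lemkuri1} and the continuity of $V_{2}\hookrightarrow V$; hence $(\Phi\varphi_{1}, z_{1})_{H} \geq -\|\Phi\varphi_{0}\|_{H}\|z_{1}\|_{H} - C\|v_{1}\|_{V_{2}}$, with $\|\Phi\varphi_{0}\|_{H}$ finite because $\varphi_{0}\in V \subset D(\Phi)$ (using $\Phi(0)=0$ and (C7)).

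Collecting all these estimates in the tested equation, every remainder is either a constant or linear in $\|z_{1}\|_{H}$ and $\|v_{1}\|_{V_{2}}$, so one more Young inequality absorbs them into $\frac{c_{L}}{2}\|z_{1}\|_{H}^2$ and $\frac{\omega_{1}}{4}\|v_{1}\|_{V_{2}}^2$, which gives $\|z_{1}\|_{H}^2 + h\|B^{1/2}z_{1}\|_{H}^2 + \|v_{1}\|_{V_{2}}^2 \leq C$; the remaining term is then immediate since $hz_{1}=v_{1}-v_{0}$ yields $h^2\|z_{1}\|_{V_{2}}^2 = \|v_{1}-v_{0}\|_{V_{2}}^2 \leq 2\|v_{1}\|_{V_{2}}^2 + 2\|v_{0}\|_{V_{2}}^2 \leq C$. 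I expect the main obstacle to be exactly the handling of $(\Phi\varphi_{1}, z_{1})_{H}$ and $(A_{2}\varphi_{1}, z_{1})_{H}$: a naive test generates genuinely divergent contributions such as $i(\varphi_{0})/h^{2}$ or $(A_{2}\varphi_{0}, \varphi_{0})_{H}/h^{2}$, and it is the combination of the splittings $A_{2}\varphi_{1}=A_{2}\varphi_{0}+hA_{2}v_{1}$ and $Bv_{1}=Bv_{0}+hBz_{1}$, the monotonicity of $\Phi$, the growth bound (C7), the regularity of $\varphi_{0},v_{0}$ in (C14), and the a priori estimates of Lemma \ref{lemkuri1} that makes every singular term cancel.
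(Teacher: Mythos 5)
Your proposal is correct and follows essentially the same route as the paper: test the $n=0$ equation with $z_{1}$ after rewriting $Bv_{1}=Bv_{0}+hBz_{1}$ and $A_{2}\varphi_{1}=A_{2}\varphi_{0}+hA_{2}v_{1}$, use (C11) for the term $h(A_{2}v_{1},z_{1})_{H}=(A_{2}v_{1},v_{1}-v_{0})_{H}$, and absorb the remaining contributions via the regularity of $\varphi_{0},v_{0}$ in (C14), Lemma \ref{lemkuri1} and Young's inequality. The only point where you deviate is the term $(\Phi\varphi_{1},z_{1})_{H}$, which you flag as the main obstacle and treat by a monotonicity--cancellation argument: this is valid but unnecessary, since the paper simply estimates $\|\Phi\varphi_{1}\|_{H}\leq C_{\Phi}(1+\|\varphi_{1}\|_{V}^{p})\|\varphi_{1}\|_{V}\leq C$ directly from (C7) with $\Phi(0)=0$ and the bound on $\|\varphi_{1}\|_{V_{2}}$ from Lemma \ref{lemkuri1}, so that $|(\Phi\varphi_{1},z_{1})_{H}|\leq C\|z_{1}\|_{H}$ causes no singularity at all.
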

%========Lem.3.4========%
\begin{proof}
Thanks to the first equation in \ref{Pn}, 
the identities $v_{1} = v_{0} + hz_{1}$ and $\varphi_{1} = \varphi_{0} + hv_{1}$, 
we can obtain that 
\begin{align}\label{coco1}
Lz_{1} + Bv_{0} + hBz_{1} + A_{2}\varphi_{0} + hA_{2}v_{1} 
+ \Phi\varphi_{1} + {\cal L}\varphi_{1} = \theta_{1}. 
\end{align}
Then, multiplying \eqref{coco1} by $z_{1}$, we can check that 
\begin{align}\label{coco2}
&\|L^{1/2}z_{1}\|_{H}^2 + (Bv_{0}, z_{1})_{H} 
+ h(Bz_{1}, z_{1})_{H} + (A_{2}\varphi_{0}, z_{1})_{H} 
+ h(A_{2}v_{1}, z_{1})_{H}   
\notag \\ 
&+ (\Phi\varphi_{1}, z_{1})_{H} + ({\cal L}\varphi_{1}, z_{1})_{H} 
= (\theta_{1}, z_{1})_{H}. 
\end{align}
Here we see from (C11) that  
\begin{align}\label{coco3}
h(A_{2}v_{1}, z_{1})_{H} 
&= (A_{2}v_{1}, v_{1} - v_{0})_{H} 
= \langle A_{2}^{*}v_{1}, v_{1} - v_{0} \rangle_{V_{2}^{*}, V_{2}}
\notag \\ 
&= \frac{1}{2}\langle A_{2}^{*}v_{1}, v_{1} \rangle_{V_{2}^{*}, V_{2}}
     - \frac{1}{2}\langle A_{2}^{*}v_{0}, v_{0} \rangle_{V_{2}^{*}, V_{2}} 
\notag \\ 
  &\,\quad+ \frac{1}{2}\langle A_{2}^{*}(v_{1} - v_{0}), 
                                                        v_{1} - v_{0} \rangle_{V_{2}^{*}, V_{2}} 
\notag \\ 
&\geq \frac{\omega_{1}}{2}\|v_{1}\|_{V_{2}}^2 - \frac{1}{2}\|v_{1}\|_{H}^2 
     - \frac{1}{2}\langle A_{2}^{*}v_{0}, v_{0} \rangle_{V_{2}^{*}, V_{2}}  
\notag \\ 
&\,\quad + \frac{\omega_{1}}{2}\|v_{1} - v_{0}\|_{V_{2}}^2 
             - \frac{1}{2}\|v_{1} - v_{0}\|_{H}^2.   
\end{align}
The condition (C7) and Lemma \ref{lemkuri1} yield that 
there exists a constant $C_{1} = C_{1}(T) > 0$ satisfying 
\begin{align}\label{coco4}
|(\Phi\varphi_{1}, z_{1})_{H}| 
\leq C_{\Phi}(1 + \|\varphi_{1}\|_{V}^{p})\|\varphi_{1}\|_{V}\|z_{1}\|_{H} 
\leq C_{1}\|z_{1}\|_{H}.  
\end{align}
Thus it follows from \eqref{coco2}-\eqref{coco4} and (C3) that 
\begin{align}\label{coco5}
&c_{L}\|z_{1}\|_{H}^2 + h\|B^{1/2}z_{1}\|_{H}^2 
+ \frac{\omega_{1}}{2}\|v_{1}\|_{V_{2}}^2 + \frac{\omega_{1}}{2}h^2\|z_{1}\|_{V_{2}}^2 
\notag \\ 
&\leq - (Bv_{0}, z_{1})_{H} - (A_{2}\varphi_{0}, z_{1})_{H} 
        + \frac{1}{2}\|v_{1}\|_{H}^2 
        + \frac{1}{2}\langle A_{2}^{*}v_{0}, v_{0} \rangle_{V_{2}^{*}, V_{2}}  
        + \frac{1}{2}\|v_{1} - v_{0}\|_{H}^2  
\notag \\ 
    &\,\quad+ C_{1}\|z_{1}\|_{H} - ({\cal L}\varphi_{1}, z_{1})_{H} + (\theta_{1}, z_{1})_{H}, 
\end{align}
Hence the inequality \eqref{coco5}, the condition (C13), the Young inequality 
and Lemma \ref{lemkuri1} imply that Lemma \ref{lemkuri4} holds. 
\end{proof}

%========Lem.3.5========%
\begin{lem}\label{lemkuri5}
Let $h_{2}$ be as in Lemma \ref{lemkuri1}. 
Then there exist constants $h_{3} \in (0, h_{2})$ and $C=C(T)>0$ such that 
\begin{align*}
\|\overline{z}_{h}\|_{L^{\infty}(0, T; H)}^2  
+ \|B^{1/2}\overline{z}_{h}\|_{L^2(0, T; H)}^2  
+ \|\overline{v}_{h}\|_{L^{\infty}(0, T; V_{2})}^2 
+ h\|\overline{z}_{h}\|_{L^2(0, T; V_{2})}^2  
\leq C 
\end{align*}
for all $h \in (0, h_{3})$. 
\end{lem}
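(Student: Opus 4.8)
\textit{Proof proposal.} The plan is to derive this second‑level energy estimate by differencing the second equation of \ref{Pn} in time and testing the result with the discrete acceleration $z_{n+1}$, mimicking the formal argument ``differentiate the damped wave equation in $t$ and multiply by $\varphi_{tt}$''. Writing that equation at two consecutive indices $k=n+1$ and $k=n$ and subtracting, and then using $v_{n+1}-v_{n}=hz_{n+1}$, $\varphi_{n+1}-\varphi_{n}=hv_{n+1}$, one obtains for $n=1,\dots,N-1$
\begin{align*}
L(z_{n+1}-z_{n}) + hBz_{n+1} + hA_{2}v_{n+1}
+ (\Phi\varphi_{n+1}-\Phi\varphi_{n}) + ({\cal L}\varphi_{n+1}-{\cal L}\varphi_{n})
= \theta_{n+1}-\theta_{n}.
\end{align*}
Here $z_{n+1}\in D(B)$ and $v_{n+1}\in D(A_{2})$ because they are difference quotients of the $\varphi_{k}$'s, which lie in $D(B)\cap D(A_{2})$ by Theorem \ref{maintheorem1} and {\rm (C14)}. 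I would then take the $H$-inner product of this identity with $z_{n+1}$, sum over $n=1,\dots,m-1$ for $2\le m\le N$, and treat the case $m=1$ directly through Lemma \ref{lemkuri4}.

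The structural terms are handled as in Lemmas \ref{lemkuri1}--\ref{lemkuri2}. The term $(L(z_{n+1}-z_{n}),z_{n+1})_{H}$ telescopes, via $2(a-b,a)_H=\|a\|_H^2-\|b\|_H^2+\|a-b\|_H^2$ applied to $L^{1/2}$, to $\tfrac12\|L^{1/2}z_{m}\|_{H}^2-\tfrac12\|L^{1/2}z_{1}\|_{H}^2$ plus a nonnegative sum, which by {\rm (C3)} controls $\|\overline{z}_{h}\|_{L^{\infty}(0,T;H)}$. The term $h(Bz_{n+1},z_{n+1})_{H}=h\|B^{1/2}z_{n+1}\|_{H}^2$ is kept on the left and yields $\|B^{1/2}\overline{z}_{h}\|_{L^2(0,T;H)}$. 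By {\rm (C11)}, $h(A_{2}v_{n+1},z_{n+1})_{H}=\langle A_{2}^{*}v_{n+1},v_{n+1}-v_{n}\rangle_{V_{2}^{*},V_{2}}$ telescopes to $\tfrac12\langle A_{2}^{*}v_{m},v_{m}\rangle_{V_{2}^{*},V_{2}}-\tfrac12\langle A_{2}^{*}v_{1},v_{1}\rangle_{V_{2}^{*},V_{2}}$ plus $\tfrac12\sum\langle A_{2}^{*}(v_{n+1}-v_{n}),v_{n+1}-v_{n}\rangle_{V_{2}^{*},V_{2}}$; invoking the coercivity in {\rm (C11)} with $\alpha=1$ and moving the terms $\|v_{m}\|_{H}^2$ and $\sum\|v_{n+1}-v_{n}\|_{H}^2=h^{2}\sum\|z_{n+1}\|_{H}^2$ (both already bounded by Lemma \ref{lemkuri1}) to the right, this gives control of $\|\overline{v}_{h}\|_{L^{\infty}(0,T;V_{2})}$ and of $h^{2}\sum\|z_{n+1}\|_{V_{2}}^2=h\|\overline{z}_{h}\|_{L^2(0,T;V_{2})}^2$. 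The initial quantities $\|L^{1/2}z_{1}\|_{H}$ and $\langle A_{2}^{*}v_{1},v_{1}\rangle_{V_{2}^{*},V_{2}}$, together with the missing $n=0$ summands $h\|B^{1/2}z_{1}\|_{H}^2$ and $h^{2}\|z_{1}\|_{V_{2}}^2$ needed to complete the sums in the statement, are all bounded by Lemma \ref{lemkuri4}.

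For the remaining terms I would combine {\rm (C7)} with the uniform bound $\|\varphi_{k}\|_{V_{2}}\le C$ from Lemma \ref{lemkuri1} and the embedding $V_{2}\hookrightarrow V$: since $\|\varphi_{n+1}-\varphi_{n}\|_{V}=h\|v_{n+1}\|_{V}\le Ch\|v_{n+1}\|_{V_{2}}$, the growth hypothesis {\rm (C7)} gives $|(\Phi\varphi_{n+1}-\Phi\varphi_{n},z_{n+1})_{H}|\le Ch\|v_{n+1}\|_{V_{2}}\|z_{n+1}\|_{H}\le\tfrac{C}{2}h\|v_{n+1}\|_{V_{2}}^2+\tfrac{C}{2}h\|z_{n+1}\|_{H}^2$, while {\rm (C13)} gives $|({\cal L}\varphi_{n+1}-{\cal L}\varphi_{n},z_{n+1})_{H}|\le C_{{\cal L}}h\|v_{n+1}\|_{H}\|z_{n+1}\|_{H}$ and the forcing term is $(\theta_{n+1}-\theta_{n},z_{n+1})_{H}=h(\delta_{h}\theta_{n},z_{n+1})_{H}$. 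After Young's inequality and summation, the pieces $h\sum\|z_{n+1}\|_{H}^2$ and $h\sum\|v_{n+1}\|_{V_{2}}^2$ are exactly of the form handled by the discrete Gronwall lemma, $h\sum\|v_{n+1}\|_{H}^2$ is bounded by Lemma \ref{lemkuri1}, and $h\sum\|\delta_{h}\theta_{n}\|_{H}^2=\|\tfrac{d\widehat{\theta}_{h}}{dt}\|_{L^2(0,T;H)}^2$ is bounded by Lemma \ref{lemkuri2}. Collecting everything and absorbing into the left the contribution $\tfrac{C}{2}h\|z_{m}\|_{H}^2$ coming from the $n=m-1$ summand (which forces $h$ to be small and produces $h_{3}\in(0,h_{2})$), one reaches an inequality of the form $\|z_{m}\|_{H}^2+\|v_{m}\|_{V_{2}}^2+\dots\le C+Ch\sum_{j<m}(\|z_{j}\|_{H}^2+\|v_{j}\|_{V_{2}}^2)$, and the discrete Gronwall lemma (\cite[Prop.\ 2.2.1]{Jerome}), applied just as in Lemma \ref{lemkuri1}, closes the estimate uniformly in $h\in(0,h_{3})$ and $m=1,\dots,N$.

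The main obstacle is the genuinely nonlinear term $(\Phi\varphi_{n+1}-\Phi\varphi_{n},z_{n+1})_{H}$: it must be controlled using only the local Lipschitz growth {\rm (C7)} and the previously established $L^{\infty}(0,T;V_{2})$ bound on $\overline{\varphi}_{h}$, and the product $h\|v_{n+1}\|_{V_{2}}\|z_{n+1}\|_{H}$ has to be split by Young's inequality so that the $\|z_{n+1}\|_{H}^2$ contribution still carries a factor $h$ and thus feeds the discrete Gronwall mechanism rather than overwhelming it. A secondary delicate point is the bookkeeping of the boundary index $n=1$ together with the $n=0$ summand, which is precisely the reason Lemma \ref{lemkuri4} was established in advance.
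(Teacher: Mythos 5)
Your proposal is correct and follows essentially the same route as the paper: difference the second discrete equation at consecutive indices, test with $z_{n+1}$, telescope the $L$- and $A_{2}^{*}$-terms (adding $\|v\|_{H}^2$ to exploit the coercivity in (C11)), control $(\Phi\varphi_{n+1}-\Phi\varphi_{n},z_{n+1})_{H}$ by (C7) together with the $L^{\infty}(0,T;V_{2})$ bound from Lemma \ref{lemkuri1}, handle the initial index via Lemma \ref{lemkuri4} and the $\delta_{h}\theta_{n}$ term via Lemma \ref{lemkuri2}, and close with absorption for small $h$ plus the discrete Gronwall lemma. This matches the paper's argument in both structure and detail.
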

%========Lem.3.5========%
\begin{proof}
Let $n \in \{1, ..., N-1\}$. 
Then the second equation in \ref{Pn} leads to the identity 
\begin{align*}
&L(z_{n+1} - z_{n}) + B(v_{n+1} - v_{n}) + hA_{2}v_{n+1} 
+ \Phi\varphi_{n+1} - \Phi\varphi_{n} 
+ {\cal L}\varphi_{n+1} - {\cal L}\varphi_{n} 
\notag \\ 
&= \theta_{n+1} - \theta_{n}. 
\end{align*} 
Here it holds that 
\begin{align*}
&(L(z_{n+1}-z_{n}), z_{n+1})_{H} 
=(L^{1/2}(z_{n+1}-z_{n}), L^{1/2}z_{n+1})_{H} 
\notag \\ 
&=\frac{1}{2}\|L^{1/2}z_{n+1}\|_{H}^2 - \frac{1}{2}\|L^{1/2}z_{n}\|_{H}^2 
+ \frac{1}{2}\|L^{1/2}(z_{n+1} - z_{n})\|_{H}^2, 
\end{align*}
and hence we have 
\begin{align}\label{pasta1}
&\frac{1}{2}\|L^{1/2}z_{n+1}\|_{H}^2 - \frac{1}{2}\|L^{1/2}z_{n}\|_{H}^2 
+ \frac{1}{2}\|L^{1/2}(z_{n+1} - z_{n})\|_{H}^2 
+ h\|B^{1/2}z_{n+1}\|_{H}^2 
\notag \\ 
&+ \langle A_{2}^{*}v_{n+1}, v_{n+1} - v_{n} \rangle_{V_{2}^{*}, V_{2}} 
  + (v_{n+1}, v_{n+1} - v_{n})_{H} 
\notag \\ 
&= -h\left(\frac{\Phi\varphi_{n+1} - \Phi\varphi_{n}}{h}, z_{n+1} \right)_{H} 
    - h\left(\frac{{\cal L}\varphi_{n+1} - {\cal L}\varphi_{n}}{h}, z_{n+1} \right)_{H} 
\notag \\ 
&\,\quad + h\left(\frac{\theta_{n+1} - \theta_{n}}{h}, z_{n+1} \right)_{H} 
             + h(v_{n+1}, z_{n+1})_{H}. 
\end{align}
On the other hand, we derive that 
\begin{align}\label{pasta2}
&\langle A_{2}^{*}v_{n+1}, v_{n+1} - v_{n} \rangle_{V_{2}^{*}, V_{2}} 
  + (v_{n+1}, v_{n+1} - v_{n})_{H} 
\notag \\ 
&= \frac{1}{2}\langle A_{2}^{*}v_{n+1}, v_{n+1} \rangle_{V_{2}^{*}, V_{2}} 
     - \frac{1}{2}\langle A_{2}^{*}v_{n}, v_{n} \rangle_{V_{2}^{*}, V_{2}} 
     + \frac{1}{2}\langle A_{2}^{*}(v_{n+1} - v_{n}), v_{n+1} - v_{n} \rangle_{V_{2}^{*}, V_{2}} 
\notag \\ 
&\,\quad + \frac{1}{2}\|v_{n+1}\|_{H}^2 - \frac{1}{2}\|v_{n}\|_{H}^2 
             + \frac{1}{2}\|v_{n+1}-v_{n}\|_{H}^2. 
\end{align}
We see from (C7) and Lemma \ref{lemkuri1} that 
there exists a constant $C_{1} = C_{1}(T) > 0$ such that 
\begin{align}\label{pasta3}
-h\left(\frac{\Phi\varphi_{n+1} - \Phi\varphi_{n}}{h}, z_{n+1} \right)_{H} 
&\leq C_{\Phi}h(1 + \|\varphi_{n+1}\|_{V}^{p} + \|\varphi_{n}\|_{V}^{q})
                                                                          \|v_{n+1}\|_{V}\|z_{n+1}\|_{H} 
\notag \\ 
&\leq C_{1}h\|v_{n+1}\|_{V}\|z_{n+1}\|_{H} 
\end{align}
for all $h \in (0, h_{2})$. 
Thus we combine \eqref{pasta1}-\eqref{pasta3} and (C13) to infer that 
there exists a constant $C_{2} = C_{2}(T) > 0$ satisfying 
\begin{align}\label{pasta4}
&\frac{1}{2}\|L^{1/2}z_{n+1}\|_{H}^2 - \frac{1}{2}\|L^{1/2}z_{n}\|_{H}^2 
+ \frac{1}{2}\|L^{1/2}(z_{n+1} - z_{n})\|_{H}^2 
+ h\|B^{1/2}z_{n+1}\|_{H}^2 
\notag \\ 
&+ \frac{1}{2}\langle A_{2}^{*}v_{n+1}, v_{n+1} \rangle_{V_{2}^{*}, V_{2}} 
     - \frac{1}{2}\langle A_{2}^{*}v_{n}, v_{n} \rangle_{V_{2}^{*}, V_{2}} 
     + \frac{1}{2}\langle A_{2}^{*}(v_{n+1} - v_{n}), v_{n+1} - v_{n} \rangle_{V_{2}^{*}, V_{2}} 
\notag \\ 
&+ \frac{1}{2}\|v_{n+1}\|_{H}^2 - \frac{1}{2}\|v_{n}\|_{H}^2 
             + \frac{1}{2}\|v_{n+1}-v_{n}\|_{H}^2 
\notag \\ 
&\leq C_{2}h\|v_{n+1}\|_{V_{2}}\|z_{n+1}\|_{H} 
+ h\left\|\frac{\theta_{n+1} - \theta_{n}}{h}\right\|_{H}\|z_{n+1}\|_{H}
\end{align}
for all $h \in (0, h_{2})$. 
Then summing \eqref{pasta4} over $n=1, ..., \ell-1$ with $2 \leq \ell \leq N$ 
means that 
\begin{align*}%\label{pasta5} 
&\frac{1}{2}\|L^{1/2}z_{\ell}\|_{H}^2 
+ \frac{1}{2}\sum_{n=1}^{\ell-1}\|L^{1/2}(z_{n+1} - z_{n})\|_{H}^2 
+ h\sum_{n=1}^{\ell-1}\|B^{1/2}z_{n+1}\|_{H}^2 
\notag \\ 
&+ \frac{1}{2}\langle A_{2}^{*}v_{\ell}, v_{\ell} \rangle_{V_{2}^{*}, V_{2}} 
     + \frac{1}{2}\sum_{n=1}^{\ell-1}
                 \langle A_{2}^{*}(v_{n+1} - v_{n}), v_{n+1} - v_{n} \rangle_{V_{2}^{*}, V_{2}} 
\notag \\ 
&+ \frac{1}{2}\|v_{\ell}\|_{H}^2  
             + \frac{1}{2}\sum_{n=1}^{\ell-1}\|v_{n+1}-v_{n}\|_{H}^2 
\notag \\ 
&\leq \frac{1}{2}\|L^{1/2}z_{1}\|_{H}^2 
+ \frac{1}{2}\langle A_{2}^{*}v_{1}, v_{1} \rangle_{V_{2}^{*}, V_{2}} 
+ \frac{1}{2}\|v_{1}\|_{H}^2  
\notag \\
&\,\quad+ C_{2}h\sum_{n=0}^{\ell-1}\|v_{n+1}\|_{V_{2}}\|z_{n+1}\|_{H} 
+ h\sum_{n=0}^{\ell-1}
                  \left\|\frac{\theta_{n+1} - \theta_{n}}{h}\right\|_{H}\|z_{n+1}\|_{H}, 
\end{align*}
whence it follows from (C3) and (C11) that 
\begin{align}\label{pasta6}
&\frac{c_{L}}{2}\|z_{\ell}\|_{H}^2 
+ h\sum_{n=1}^{\ell-1}\|B^{1/2}z_{n+1}\|_{H}^2 
+ \frac{\omega_{1}}{2}\|v_{\ell}\|_{V_{2}}^2  
             + \frac{\omega_{1}}{2}h^2\sum_{n=1}^{\ell-1}\|z_{n+1}\|_{V_{2}}^2 
\notag \\ 
&\leq \frac{1}{2}\|L^{1/2}z_{1}\|_{H}^2 
+ \frac{1}{2}\langle A_{2}^{*}v_{1}, v_{1} \rangle_{V_{2}^{*}, V_{2}} 
+ \frac{1}{2}\|v_{1}\|_{H}^2  
\notag \\
&\,\quad+ C_{2}h\sum_{n=0}^{\ell-1}\|v_{n+1}\|_{V_{2}}\|z_{n+1}\|_{H} 
+ h\sum_{n=0}^{\ell-1}
                  \left\|\frac{\theta_{n+1} - \theta_{n}}{h}\right\|_{H}\|z_{n+1}\|_{H} 
\end{align}
for all $h \in (0, h_{2})$ and $\ell = 2, ..., N$. 
Therefore we see from \eqref{pasta6}, 
the boundedness of $L$ and $A_{2}^{*}$, 
and Lemma \ref{lemkuri4} that 
there exists a constant $C_{3} = C_{3}(T) > 0$ such that 
\begin{align}\label{pasta7}
&\frac{c_{L}}{2}\|z_{m}\|_{H}^2  
+ h\sum_{n=0}^{m-1}\|B^{1/2}z_{n+1}\|_{H}^2 
+ \frac{\omega_{1}}{2}\|v_{m}\|_{V_{2}}^2  
             + \frac{\omega_{1}}{2}h^2\sum_{n=0}^{m-1}\|z_{n+1}\|_{V_{2}}^2 
\notag \\ 
&\leq C_{3} + C_{2}h\sum_{n=0}^{m-1}\|v_{n+1}\|_{V_{2}}\|z_{n+1}\|_{H} 
+ h\sum_{n=0}^{m-1}
                  \left\|\frac{\theta_{n+1} - \theta_{n}}{h}\right\|_{H}\|z_{n+1}\|_{H}
\end{align}
for all $h \in (0, h_{2})$ and $m=1, ..., N$. 
Moreover, the inequality \eqref{pasta7}, the Young inequality 
and Lemma \ref{lemkuri2} yield that 
there exists a constant $C_{4} = C_{4}(T) > 0$ such that 
\begin{align}\label{pasta8}
&\frac{1}{2}(c_{L} - C_{2}h - h)\|z_{m}\|_{H}^2  
+ h\sum_{n=0}^{m-1}\|B^{1/2}z_{n+1}\|_{H}^2 
+ \frac{1}{2}(\omega_{1} - C_{2}h)\|v_{m}\|_{V_{2}}^2  
\notag \\ 
&+ \frac{\omega_{1}}{2}h^2\sum_{n=0}^{m-1}\|z_{n+1}\|_{V_{2}}^2 
\leq C_{4} + \frac{C_{2}}{2}h\sum_{j=0}^{m-1}\|v_{j}\|_{V_{2}}^2  
+ \frac{1+C_{2}}{2}h\sum_{j=0}^{m-1}\|z_{j}\|_{H}^2  
\end{align}
for all $h \in (0, h_{2})$ and $m=1, ..., N$. 
Thus there exist constants $h_{3} \in (0, h_{2})$ 
and  $C_{5} = C_{5}(T) > 0$ 
such that 
\begin{align*}%\label{pasta9}
&\|z_{m}\|_{H}^2 + h\sum_{n=0}^{m-1}\|B^{1/2}z_{n+1}\|_{H}^2 
+ \|v_{m}\|_{V_{2}}^2 + h^2\sum_{n=0}^{m-1}\|z_{n+1}\|_{V_{2}}^2 
\notag \\ 
&\leq C_{5} + C_{5}h\sum_{j=0}^{m-1}\|v_{j}\|_{V_{2}}^2  
+ C_{5}h\sum_{j=0}^{m-1}\|z_{j}\|_{H}^2  
\end{align*}
for all $h \in (0, h_{3})$ and $m=1, ..., N$. 
Then we infer from 
the discrete Gronwall lemma (see e.g., \cite[Prop.\ 2.2.1]{Jerome}) that 
there exists a constant $C_{6} = C_{6}(T) > 0$ satisfying 
\begin{align*}%\label{pasta10}
\|z_{m}\|_{H}^2 + h\sum_{n=0}^{m-1}\|B^{1/2}z_{n+1}\|_{H}^2 
+ \|v_{m}\|_{V_{2}}^2 + h^2\sum_{n=0}^{m-1}\|z_{n+1}\|_{V_{2}}^2 
\leq C_{6} 
\end{align*}
for all $h \in (0, h_{3})$ and $m=1, ..., N$.

\end{proof}

%========Lem.3.6========%
\begin{lem}\label{lemkuri6}
Let $h_{2}$ be as in Lemma \ref{lemkuri1}. 
Then there exists a constant $C=C(T)>0$ such that 
\begin{align*}
\|\Phi\overline{\varphi}_{h}\|_{L^{\infty}(0, T; H)}  
\leq C 
\end{align*}
for all $h \in (0, h_{2})$. 
\end{lem}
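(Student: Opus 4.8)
The plan is to read the bound off directly from the structural growth condition (C7) together with the estimate on $\overline{\varphi}_{h}$ in $L^{\infty}(0,T;V_{2})$ already furnished by Lemma \ref{lemkuri1}; no new energy inequality or test‑function computation is required.

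First I would record the consequence of (C7) obtained by taking $z=0$ and recalling $\Phi(0)=0$, namely
\[
\|\Phi w\|_{H}\leq C_{\Phi}\bigl(1+\|w\|_{V}^{p}\bigr)\|w\|_{V}
\qquad\text{for all }w\in V.
\]
Next I would observe that, by Theorem \ref{maintheorem1} together with (C6) and (C2), every iterate satisfies $\varphi_{n+1}\in D(B)\cap D(A_{2})\subset V_{2}\subset V$, so that $\overline{\varphi}_{h}(t)\in V_{2}$ for a.a.\ $t\in(0,T)$ and the displayed estimate applies to $w=\overline{\varphi}_{h}(t)$. Invoking the continuity of the embedding $V_{2}\hookrightarrow V$ this produces a constant $C_{1}>0$, independent of $h$, with
\[
\|\Phi\overline{\varphi}_{h}(t)\|_{H}\leq C_{1}\bigl(1+\|\overline{\varphi}_{h}(t)\|_{V_{2}}^{p}\bigr)\|\overline{\varphi}_{h}(t)\|_{V_{2}}
\qquad\text{for a.a.\ }t\in(0,T).
\]
Finally I would take the essential supremum over $t\in(0,T)$ and use Lemma \ref{lemkuri1}, which bounds $\|\overline{\varphi}_{h}\|_{L^{\infty}(0,T;V_{2})}$ by a constant depending only on $T$ for all $h\in(0,h_{2})$; since $r\mapsto C_{1}(1+r^{p})r$ is nondecreasing on $[0,\infty)$, this yields at once $\|\Phi\overline{\varphi}_{h}\|_{L^{\infty}(0,T;H)}\leq C=C(T)$ for all $h\in(0,h_{2})$, which is the claim.

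I do not anticipate a genuine obstacle here. The only point deserving a moment's attention is that the polynomial power $\|\overline{\varphi}_{h}\|_{V_{2}}^{p}$ entering through (C7) is harmless precisely because Lemma \ref{lemkuri1} controls $\overline{\varphi}_{h}$ in $L^{\infty}(0,T;V_{2})$ and not merely in $L^{2}(0,T;V_{2})$, so the bound survives passing to the supremum in time. One could alternatively multiply the second equation of \ref{Pn} by $\Phi\varphi_{n+1}$, discard the nonnegative term $(A_{2}\varphi_{n+1},\Phi\varphi_{n+1})_{H}$ (nonnegative by (C9) after letting $\lambda\to0$) and absorb $\|\Phi\varphi_{n+1}\|_{H}^{2}$ against the remaining terms via the $L^{\infty}$‑bounds from Lemmas \ref{lemkuri1}, \ref{lemkuri2} and \ref{lemkuri5}, but this route calls for extra care with the damping term $(Bv_{n+1},\Phi\varphi_{n+1})_{H}$ and is less economical than the one above.
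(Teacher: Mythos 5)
Your argument is correct and is exactly the paper's (one-line) proof: apply (C7) with $z=0$ and $\Phi(0)=0$ to get $\|\Phi w\|_{H}\leq C_{\Phi}(1+\|w\|_{V}^{p})\|w\|_{V}$, then feed in the $L^{\infty}(0,T;V_{2})$ bound on $\overline{\varphi}_{h}$ from Lemma \ref{lemkuri1}. No issues.
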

%========Lem.3.6========%
\begin{proof}
This lemma can be proved by (C7) and Lemma \ref{lemkuri1}. 
\end{proof}

%========Lem.3.7========%
\begin{lem}\label{lemkuri7}
Let $h_{3}$ be as in Lemma \ref{lemkuri5}. 
Then there exists a constant $C=C(T)>0$ such that 
\begin{align*}
\|B\overline{v}_{h}\|_{L^2(0, T; H)}^2 
+ \|A_{2}\overline{\varphi}_{h}\|_{L^2(0, T; H)}^2 
\leq C 
\end{align*}
for all $h \in (0, h_{3})$.
\end{lem}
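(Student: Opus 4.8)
The plan is to read the bound off the second equation in \ref{Pn} and the estimates already available, the only delicate point being how to split $\|B v_{n+1}+A_2\varphi_{n+1}\|_H$ into the two pieces we want.

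First I would rewrite the second equation of \ref{Pn} as
$$B v_{n+1} + A_2\varphi_{n+1} = \theta_{n+1} - L z_{n+1} - \Phi\varphi_{n+1} - {\cal L}\varphi_{n+1}$$
and observe that the right-hand side is bounded in $H$ uniformly in $n$ and in $h\in(0,h_3)$: $\|\theta_{n+1}\|_H$ is controlled by Lemma \ref{lemkuri2}, $\|Lz_{n+1}\|_H$ by the boundedness of $L$ and Lemma \ref{lemkuri5}, $\|\Phi\varphi_{n+1}\|_H$ by Lemma \ref{lemkuri6}, and $\|{\cal L}\varphi_{n+1}\|_H$ by (C13) together with Lemma \ref{lemkuri1} and the continuity of $V_2\hookrightarrow H$. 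Hence $\|B\overline{v}_h + A_2\overline{\varphi}_h\|_{L^\infty(0,T;H)}\le C(T)$, and in particular the same holds in $L^2(0,T;H)$.

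To separate the two terms I would use the identity $\|B v_{n+1}\|_H^2 + \|A_2\varphi_{n+1}\|_H^2 = \|B v_{n+1} + A_2\varphi_{n+1}\|_H^2 - 2(B v_{n+1}, A_2\varphi_{n+1})_H$ and bound the cross term from below by a telescoping quantity. The key observation is that $v_{n+1} = (\varphi_{n+1}-\varphi_n)/h \in D(B)\cap D(A_2)$, since $D(B)$ and $D(A_2)$ are linear subspaces, $\varphi_0\in D(B)\cap D(A_2)$ by (C14), and $\varphi_1,\dots,\varphi_N\in D(B)\cap D(A_2)$ by Theorem \ref{maintheorem1}; so (C6) may legitimately be applied to $v_{n+1}$. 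Setting $b_n := (B\varphi_n, A_2\varphi_n)_H \ge 0$ and expanding $b_{n+1}$ by means of $\varphi_{n+1}=\varphi_n+hv_{n+1}$, the linearity of $B$ and $A_2$, and the symmetry relation in (C6), one gets $b_{n+1} = b_n + 2h(Bv_{n+1}, A_2\varphi_n)_H + h^2(Bv_{n+1}, A_2 v_{n+1})_H$, whence $(Bv_{n+1}, A_2\varphi_{n+1})_H = \frac{1}{2h}(b_{n+1}-b_n) + \frac{h}{2}(Bv_{n+1}, A_2 v_{n+1})_H \ge \frac{1}{2h}(b_{n+1}-b_n)$ by the nonnegativity in (C6). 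Consequently $\|B v_{n+1}\|_H^2 + \|A_2\varphi_{n+1}\|_H^2 \le \|B v_{n+1}+A_2\varphi_{n+1}\|_H^2 - \frac{1}{h}(b_{n+1}-b_n)$.

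Finally I would multiply this inequality by $h$ and sum over $n=0,\dots,N-1$: the second term telescopes to $-(b_N-b_0)\le b_0 = (B\varphi_0, A_2\varphi_0)_H$, a fixed finite constant by (C14), while the first term sums to $\|B\overline{v}_h+A_2\overline{\varphi}_h\|_{L^2(0,T;H)}^2$, bounded by the first step. This gives $\|B\overline{v}_h\|_{L^2(0,T;H)}^2 + \|A_2\overline{\varphi}_h\|_{L^2(0,T;H)}^2 \le C(T)$ for all $h\in(0,h_3)$. The main obstacle is precisely the cross term $(Bv_{n+1},A_2\varphi_{n+1})_H$: no pointwise sign is available for it and a naive estimate produces an $h^{-1}$ factor, so one has to recognize it as (half of) the discrete increment of the energy $(B\varphi,A_2\varphi)_H$ and absorb it by telescoping; the verification that $v_{n+1}\in D(B)\cap D(A_2)$, which makes (C6) applicable to $v_{n+1}$, is what makes this argument go through.
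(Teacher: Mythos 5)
Your proof is correct and rests on exactly the same key idea as the paper's: the cross term $(Bv_{n+1},A_{2}\varphi_{n+1})_{H}$ is recognized, via the symmetry and nonnegativity in (C6), as half the discrete increment of $(B\varphi_{n},A_{2}\varphi_{n})_{H}$ plus a nonnegative remainder, and is absorbed by telescoping. The only (inessential) difference in packaging is that the paper multiplies the second equation of \ref{Pn} by $hBv_{n+1}$ to bound $\|B\overline{v}_{h}\|_{L^2(0,T;H)}$ first and then reads off $A_{2}\overline{\varphi}_{h}$ by comparison, whereas you bound the sum $Bv_{n+1}+A_{2}\varphi_{n+1}$ in $H$ from the equation and expand the square to get both terms at once.
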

%========Lem.3.7========%
\begin{proof}
We derive from the second equation in \ref{Pn} that 
\begin{align*}
&h\|Bv_{n+1}\|_{H}^2 
= h(Bv_{n+1}, Bv_{n+1})_{H} 
\notag \\ 
&=-h(Lz_{n+1}, Bv_{n+1})_{H} 
    - h(A_{2}\varphi_{n+1}, Bv_{n+1})_{H} 
    - h(\Phi\varphi_{n+1}, Bv_{n+1})_{H} 
\notag \\ 
&\,\quad- h({\cal L}\varphi_{n+1}, Bv_{n+1})_{H} 
    + h(\theta_{n+1}, Bv_{n+1})_{H},   
\end{align*}
and hence it follows from the Young inequality, 
the boundedness of $L$ and (C13) that 
there exists a constant $C_{1} > 0$ satisfying 
\begin{align}\label{pen1}
&h\|Bv_{n+1}\|_{H}^2 
\notag \\ 
&\leq C_{1}h\|z_{n+1}\|_{H}^2 - h(A_{2}\varphi_{n+1}, Bv_{n+1})_{H}  
       + C_{1}h\|\Phi\varphi_{n+1}\|_{H}^2 
       + C_{1}h\|\theta_{n+1}\|_{H}^2  
\end{align}
for all $h \in (0, h_{3})$. 
Here the condition (C6) implies that 
\begin{align}\label{pen2}
&-h(A_{2}\varphi_{n+1}, Bv_{n+1})_{H} 
\notag \\ 
&= -(A_{2}\varphi_{n+1}, B\varphi_{n+1}-B\varphi_{n})_{H} 
\notag \\ 
&= - \frac{1}{2}(A_{2}\varphi_{n+1}, B\varphi_{n+1})_{H} 
     + \frac{1}{2}(A_{2}\varphi_{n}, B\varphi_{n})_{H} 
\notag \\ 
&\,\quad- \frac{1}{2}
                      (A_{2}(\varphi_{n+1}-\varphi_{n}), B(\varphi_{n+1}-\varphi_{n}))_{H}.
\end{align}
Thus, summing \eqref{pen1} over $n=0, ..., m-1$ with $1 \leq m \leq N$, 
we deduce from \eqref{pen2}, 
Lemmas \ref{lemkuri1}, \ref{lemkuri5} and \ref{lemkuri6} that 
there exists a constant $C_{2} = C_{2}(T) > 0$ such that 
\begin{align}\label{pen3}
\|B\overline{v}_{h}\|_{L^2(0, T; H)}^2 \leq C_{2}
\end{align}
for all $h \in (0, h_{3})$. 
Moreover, we see from the second equation in \ref{Ph}, \eqref{pen3}, 
Lemmas \ref{lemkuri1}, \ref{lemkuri5} and \ref{lemkuri6} that 
there exists a constant $C_{3} = C_{3}(T) > 0$ satisfying  
\begin{align*}
\|A_{2}\overline{\varphi}_{h}\|_{L^2(0, T; H)}^2 \leq C_{3}
\end{align*}
for all $h \in (0, h_{3})$. 
\end{proof}

%========Lem.3.8========%
\begin{lem}\label{lemkuri8}
Let $h_{3}$ be as in Lemma \ref{lemkuri5}. 
Then there exists a constant $C=C(T)>0$ such that 
\begin{align*}
&\|\widehat{\varphi}_{h}\|_{W^{1, \infty}(0, T; V_{2})}  
+ \|\widehat{v}_{h}\|_{W^{1, \infty}(0, T; H)}  
\notag \\ 
&+ \|\widehat{v}_{h}\|_{L^{\infty}(0, T; V_{2})}  
+ \|\widehat{\theta}_{h}\|_{H^1(0, T; H)}  
+ \|\widehat{\theta}_{h}\|_{L^{\infty}(0, T; V_{1})}   
\leq C 
\end{align*}
for all $h \in (0, h_{3})$. 
\end{lem}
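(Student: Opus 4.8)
The plan is to treat Lemma~\ref{lemkuri8} as a purely organisational step: each norm on the left-hand side involves one of the piecewise-linear interpolants $\widehat{\varphi}_h$, $\widehat{v}_h$, $\widehat{\theta}_h$, and the exact identities recorded in the Remark following the statement of \ref{Ph}, together with the definitions \eqref{deltah}--\eqref{overandunderline}, allow me to rewrite each such norm in terms of the piecewise-constant interpolants $\overline{\varphi}_h$, $\overline{v}_h$, $\overline{z}_h$, $\overline{\theta}_h$ and of the initial data; the conclusion then follows by invoking Lemmas~\ref{lemkuri1}, \ref{lemkuri2} and \ref{lemkuri5}. No new energy estimate is needed here, and since $h_3 \le h_2 \le h_0$ all three lemmas apply on the common interval $(0, h_3)$.

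For the $\varphi$- and $v$-terms I would first split $\|\widehat{\varphi}_h\|_{W^{1,\infty}(0,T;V_2)} = \|\widehat{\varphi}_h\|_{L^\infty(0,T;V_2)} + \|\frac{d\widehat{\varphi}_h}{dt}\|_{L^\infty(0,T;V_2)}$ and similarly for $\|\widehat{v}_h\|_{W^{1,\infty}(0,T;H)}$. By \eqref{rem1} the first summand equals $\max\{\|\varphi_0\|_{V_2}, \|\overline{\varphi}_h\|_{L^\infty(0,T;V_2)}\}$, which is finite by (C14) and bounded by Lemma~\ref{lemkuri1}, while by \eqref{hat1} and \eqref{overandunderline} the second summand is $\|\overline{v}_h\|_{L^\infty(0,T;V_2)}$, bounded by Lemma~\ref{lemkuri5}. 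In the same way, \eqref{rem2} reduces $\|\widehat{v}_h\|_{L^\infty(0,T;H)}$ and $\|\widehat{v}_h\|_{L^\infty(0,T;V_2)}$ to $\|v_0\|_H$, $\|v_0\|_{V_2}$ (finite since $v_0 \in D(B) \cap V_2$ by (C14)) together with $\|\overline{v}_h\|_{L^\infty(0,T;H)}$ and $\|\overline{v}_h\|_{L^\infty(0,T;V_2)}$, controlled by Lemmas~\ref{lemkuri1} and \ref{lemkuri5}; and \eqref{hat2}, \eqref{overandunderline} give $\frac{d\widehat{v}_h}{dt} = \overline{z}_h$, so that $\|\frac{d\widehat{v}_h}{dt}\|_{L^\infty(0,T;H)} = \|\overline{z}_h\|_{L^\infty(0,T;H)}$, again bounded by Lemma~\ref{lemkuri5}.

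For the $\theta$-terms I proceed analogously: $\|\widehat{\theta}_h\|_{H^1(0,T;H)} = \|\widehat{\theta}_h\|_{L^2(0,T;H)} + \|\frac{d\widehat{\theta}_h}{dt}\|_{L^2(0,T;H)}$, where $\frac{d\widehat{\theta}_h}{dt} = \delta_h\theta_n$ by \eqref{hat1}, so the second summand is bounded by Lemma~\ref{lemkuri2}. For the $L^\infty(0,T;V_1)$-bound, \eqref{rem3} gives $\|\widehat{\theta}_h\|_{L^\infty(0,T;V_1)} = \max\{\|\theta_0\|_{V_1}, \|\overline{\theta}_h\|_{L^\infty(0,T;V_1)}\}$, finite by (C14) and bounded by Lemma~\ref{lemkuri2}; in particular, the continuity of the embedding $V_1 \hookrightarrow H$ then bounds $\|\widehat{\theta}_h\|_{L^\infty(0,T;H)}$, whence $\|\widehat{\theta}_h\|_{L^2(0,T;H)} \le T^{1/2}\|\widehat{\theta}_h\|_{L^\infty(0,T;H)}$. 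Taking $C = C(T)$ to be the sum of the constants furnished by Lemmas~\ref{lemkuri1}, \ref{lemkuri2}, \ref{lemkuri5} and of the fixed initial-data norms yields the assertion for all $h \in (0, h_3)$.

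There is no genuine analytic obstacle in this lemma; the only points that deserve care are that the Remark identities are \emph{exact} equalities, so that no factor depending on $h$ is introduced when passing between the $\widehat{\cdot}$ and $\overline{\cdot}$ interpolants; that each initial datum carries precisely the regularity needed for the corresponding norm to be finite, which is guaranteed by (C14) (in particular $\varphi_0 \in D(B) \cap D(A_2) \subset V_2$, $v_0 \in D(B) \cap V_2$, $\theta_0 \in V_1$); and that the $h$-thresholds are nested, $h_3 \le h_2 \le h_0$, so that the quoted uniform bounds are all valid on $(0, h_3)$. The substantive work---the discrete energy estimates and the discrete Gronwall arguments---was already carried out in Lemmas~\ref{lemkuri1}, \ref{lemkuri2} and \ref{lemkuri5}, and the present lemma merely assembles them.
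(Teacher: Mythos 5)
Your proposal is correct and follows exactly the paper's own (very terse) argument: the identities \eqref{rem1}--\eqref{rem3} together with $\frac{d\widehat{\varphi}_h}{dt}=\overline{v}_h$, $\frac{d\widehat{v}_h}{dt}=\overline{z}_h$, $\frac{d\widehat{\theta}_h}{dt}=\delta_h\theta_n$ reduce every norm to quantities already bounded in Lemmas \ref{lemkuri1}, \ref{lemkuri2} and \ref{lemkuri5}, plus fixed initial-data norms. Nothing is missing.
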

%========Lem.3.8========% 
\begin{proof}
Thanks to 
\eqref{rem1}-\eqref{rem3}, Lemmas \ref{lemkuri1}, \ref{lemkuri2} and \ref{lemkuri5}, 
we can obtain Lemma \ref{lemkuri8}. 
\end{proof}

\begin{prth1.2}
Owing to Lemmas \ref{lemkuri1}-\ref{lemkuri3}, \ref{lemkuri5}-\ref{lemkuri8}, 
and \eqref{rem4}-\eqref{rem6}, 
there exist some functions 
\begin{align*}
&\theta \in H^1(0, T; H) \cap L^{\infty}(0, T; V_{1}) \cap L^2(0, T; D(A_{1})), 
\\ 
&\varphi \in L^{\infty}(0, T; V_{2}) \cap L^2(0, T; D(A_{2})), 
\\ 
&\xi \in L^2(0, T; H)  
\end{align*} 
such that 
\begin{align*}
\frac{d\varphi}{dt} \in L^{\infty}(0, T; V_{2}) \cap L^2(0, T; D(B)),\ 
\frac{d^2\varphi}{dt^2} \in L^{\infty}(0, T; H)  
\end{align*}
and 
\begin{align}
\label{weak1} 
&\widehat{\varphi}_{h} \to \varphi 
\quad \mbox{weakly$^{*}$ in}\ W^{1, \infty}(0, T; V_{2}),  
\\[2.5mm] 
\notag 
&\overline{v}_{h} \to \frac{d\varphi}{dt}     
\quad \mbox{weakly$^{*}$ in}\ L^{\infty}(0, T; V_{2}),  
\\[2.5mm] 
\label{weak2}
&\widehat{v}_{h} \to \frac{d\varphi}{dt} 
\quad \mbox{weakly$^{*}$ in}\ W^{1, \infty}(0, T; H) \cap L^{\infty}(0, T; V_{2}),  
\\[2.5mm] 
\notag 
&\overline{z}_{h} \to \frac{d^2\varphi}{dt^2}   
\quad \mbox{weakly$^{*}$ in}\ L^{\infty}(0, T; H),  
\\[2.5mm] 
\label{weak3}
&L\overline{z}_{h} \to L\frac{d^2\varphi}{dt^2}   
\quad \mbox{weakly$^{*}$ in}\ L^{\infty}(0, T; H),  
\\[2.5mm] 
\label{weak4}
&\widehat{\theta}_{h} \to \theta  
\quad \mbox{weakly$^{*}$ in}\ H^1(0, T; H) \cap L^{\infty}(0, T; V_{1}),   
\\[2.5mm] 
\notag 
&\overline{\varphi}_{h} \to \varphi    
\quad \mbox{weakly$^{*}$ in}\ L^{\infty}(0, T; V_{2}),  
\\[2.5mm] 
\notag 
&\overline{\theta}_{h} \to \theta     
\quad \mbox{weakly$^{*}$ in}\ L^{\infty}(0, T; V_{1}),  
\\[2.5mm] 
\label{weak5}
&A_{1}\overline{\theta}_{h} \to A_{1}\theta     
\quad \mbox{weakly in}\ L^2(0, T; H),  
\\[2.5mm]  
\label{weak6}
&B\overline{v}_{h} \to B\frac{d\varphi}{dt}      
\quad \mbox{weakly in}\ L^2(0, T; H),  
\\[2.5mm] 
\label{weak7}
&A_{2}\overline{\varphi}_{h} \to A_{2}\varphi      
\quad \mbox{weakly in}\ L^2(0, T; H),  
\\[2.5mm] 
\label{weak8}
&\Phi\overline{\varphi}_{h} \to \xi      
\quad \mbox{weakly$^{*}$ in}\ L^{\infty}(0, T; H)
\end{align}
as $h=h_{j}\to+0$. 
Here, since Lemma \ref{lemkuri8}, 
the compactness of the embedding $V_{2} \hookrightarrow H$ and 
the convergence \eqref{weak1} 
yield that 
\begin{align}\label{stronghatvarphi}
\widehat{\varphi}_{h} \to \varphi  
\quad \mbox{strongly in}\ C([0, T]; H)
\end{align}
as $h=h_{j}\to+0$ (see e.g., \cite[Section 8, Corollary 4]{Simon}), 
we infer from \eqref{rem4} and Lemma \ref{lemkuri5} that 
\begin{align}\label{strongoverlinevarphi}
\overline{\varphi}_{h} \to \varphi  
\quad \mbox{strongly in}\ L^{\infty}(0, T; H)
\end{align}
as $h=h_{j}\to+0$. 
Thus it follows from \eqref{weak8} and \eqref{strongoverlinevarphi} that 
\begin{align*}
\int_{0}^{T}(\Phi\overline{\varphi}_{h}(t), \overline{\varphi}_{h}(t))_{H}\,dt 
\to \int_{0}^{T}(\xi(t), \varphi(t))_{H}\,dt 
\end{align*}
as $h=h_{j}\to+0$, whence we have 
\begin{align}\label{xiPhivarphi}
\xi = \Phi\varphi  \quad\mbox{in}\ H\ \mbox{a.e.\ on}\ (0, T)  
\end{align}
(see e.g., \cite[Lemma 1.3, p.\ 42]{Barbu1}).  
On the other hand, we derive from Lemma \ref{lemkuri8}, 
the compactness of the embedding $V_{1} \hookrightarrow H$ and \eqref{weak4} 
that 
\begin{align}\label{stronghattheta}
\widehat{\theta}_{h} \to \theta  
\quad \mbox{strongly in}\ C([0, T]; H)
\end{align}
as $h=h_{j}\to+0$. 
Similarly, we see from \eqref{weak2} that 
\begin{align}\label{stronghatv}
\widehat{v}_{h} \to \frac{d\varphi}{dt}  
\quad \mbox{strongly in}\ C([0, T]; H)
\end{align}
as $h=h_{j}\to+0$. 
Therefore 
we can conclude that there exists a solution of \ref{P} 
by combining \eqref{weak1}, \eqref{weak3}-\eqref{stronghatv}, 
(C13) and by observing that $\overline{f}_{h} \to f$ 
strongly in $L^2(0, T; H)$ as $h\to+0$ (see \cite[Section 5]{CK}).    

Next we establish uniqueness of solutions to \ref{P}. 
We let $(\theta, \varphi)$, $(\overline{\theta}, \overline{\varphi})$ 
be two solutions of \ref{P} and put 
$\widetilde{\theta}:=\theta-\overline{\theta}$, 
$\widetilde{\varphi}:=\varphi-\overline{\varphi}$. 
Then the identity \eqref{df1} means that  
\begin{align}\label{cocoro1}
\frac{1}{2}\frac{d}{dt}\|\widetilde{\theta}(t)\|_{H}^2 
+ \left(\frac{d\widetilde{\varphi}}{dt}(t), \widetilde{\theta}(t)\right)_{H} 
+ (A_{1}\widetilde{\theta}(t), \widetilde{\theta}(t))_{H} 
= 0.  
\end{align}
Here, by \eqref{df2}, the Young inequality, 
(C7), (C13), Lemma \ref{lemkuri1} 
and the continuity of the embedding $V_{2} \hookrightarrow H$,    
we can verify that there exists a constant $C_{1}=C_{1}(T)>0$ such that 
\begin{align}\label{cocoro2}
&\frac{1}{2}\frac{d}{dt}\left\|L^{1/2}\frac{d\widetilde{\varphi}}{dt}(t)\right\|_{H}^2 
+ \left(B\frac{d\widetilde{\varphi}}{dt}(t), \frac{d\widetilde{\varphi}}{dt}(t)\right)_{H} 
+ \frac{1}{2}\frac{d}{dt}\Bigl\|A_{2}^{1/2}\widetilde{\varphi}(t)\Bigr\|_{H}^2 
\notag \\[1mm] 
&= \left(\widetilde{\theta}(t), \frac{d\widetilde{\varphi}}{dt}(t)\right)_{H} 
     - \left(\Phi\varphi(t)-\Phi\overline{\varphi}(t), 
                                                          \frac{d\widetilde{\varphi}}{dt}(t)\right)_{H} 
      - \left({\cal L}\varphi(t)-{\cal L}\overline{\varphi}(t), 
                                                          \frac{d\widetilde{\varphi}}{dt}(t)\right)_{H} 
\notag \\ 
&\leq \left(\widetilde{\theta}(t), \frac{d\widetilde{\varphi}}{dt}(t)\right)_{H}  
         + \frac{C_{\Phi}^2}{2}(1+\|\varphi(t)\|_{V}^{p} + \|\overline{\varphi}(t)\|_{V}^{q})^2
                                                                               \|\widetilde{\varphi}(t)\|_{V}^2 
\notag \\ 
&\,\quad+ \frac{C_{{\cal L}}^2}{2}\|\widetilde{\varphi}(t)\|_{H}^2 
         + \left\|\frac{d\widetilde{\varphi}}{dt}(t)\right\|_{H}^2 
\notag \\[1.5mm]
&\leq  \left(\widetilde{\theta}(t), \frac{d\widetilde{\varphi}}{dt}(t)\right)_{H} 
         + C_{1}\|\widetilde{\varphi}(t)\|_{V_{2}}^2 
         + \frac{1}{c_{L}}\left\|L^{1/2}\frac{d\widetilde{\varphi}}{dt}(t)\right\|_{H}^2    
\end{align}
for a.a.\ $t\in(0, T)$. 
Also, the Young inequality, (C3) 
and the continuity of the embedding $V_{2} \hookrightarrow H$ 
imply that 
there exists a constant $C_{2}>0$ such that 
\begin{align}\label{cocoro3}
\frac{1}{2}\frac{d}{dt}\|\widetilde{\varphi}(t)\|_{H}^2 
= \left(\frac{d\widetilde{\varphi}}{dt}(t), \widetilde{\varphi}(t)\right)_{H} 
\leq \frac{1}{2c_{L}}\left\|L^{1/2}\frac{d\widetilde{\varphi}}{dt}(t)\right\|_{H}^2 
       + C_{2}\|\widetilde{\varphi}(t)\|_{V_{2}}^2    
\end{align}
for a.a.\ $t \in (0, T)$. 
Hence we deduce from \eqref{cocoro1}-\eqref{cocoro3}, the integration over $(0, t)$, 
where $t \in [0, T]$, \eqref{df3} and the monotonicity of $A_{1}$, $B$ that 
there exists a constant $C_{3}=C_{3}(T)>0$ such that 
\begin{align}\label{cocoro4} 
&\frac{1}{2}\|\widetilde{\theta}(t)\|_{H}^2 
+ \frac{1}{2}\left\|L^{1/2}\frac{d\widetilde{\varphi}}{dt}(t)\right\|_{H}^2 
+ \frac{1}{2}\Bigl\|A_{2}^{1/2}\widetilde{\varphi}(t)\Bigr\|_{H}^2 
+ \frac{1}{2}\|\widetilde{\varphi}\|_{H}^2 
\notag \\ 
&\leq C_{3}\int_{0}^{t}\left\|L^{1/2}\frac{d\widetilde{\varphi}}{dt}(s)\right\|_{H}^2\,ds 
         + C_{3}\int_{0}^{t}\|\widetilde{\varphi}(s)\|_{V_{2}}^2\,ds      
\end{align}
for all $t \in [0, T]$. 
Here, owing to (C11), it holds that 
\begin{align}\label{cocoro5}
&\frac{1}{2}\Bigl\|A_{2}^{1/2}\widetilde{\varphi}(t)\Bigr\|_{H}^2 
+ \frac{1}{2}\|\widetilde{\varphi}\|_{H}^2 
\notag \\ 
&= \frac{1}{2}\langle A_{2}^{*}\widetilde{\varphi}(t), 
                                            \widetilde{\varphi}(t) \rangle_{V_{2}^{*}, V_{2}} 
    + \frac{1}{2}\|\widetilde{\varphi}\|_{H}^2 
\geq \frac{\omega_{1}}{2}\|\widetilde{\varphi}(t)\|_{V_{2}}^2. 
\end{align}
Thus it follows from \eqref{cocoro4} and \eqref{cocoro5} that 
\begin{align*}
&\frac{1}{2}\|\widetilde{\theta}(t)\|_{H}^2 
+ \frac{1}{2}\left\|L^{1/2}\frac{d\widetilde{\varphi}}{dt}(t)\right\|_{H}^2 
+ \frac{\omega_{1}}{2}\|\widetilde{\varphi}(t)\|_{V_{2}}^2 
\notag \\ 
&\leq C_{3}\int_{0}^{t}\left\|L^{1/2}\frac{d\widetilde{\varphi}}{dt}(s)\right\|_{H}^2\,ds 
         + C_{3}\int_{0}^{t}\|\widetilde{\varphi}(s)\|_{V_{2}}^2\,ds      
\end{align*} 
and then applying the Gronwall lemma yields that 
$\widetilde{\theta}=\widetilde{\varphi}=0$, 
which leads to the identities 
$\theta=\overline{\theta}$ and $\varphi=\overline{\varphi}$. 
\qed  
\end{prth1.2}

\vspace{10pt}
 
%%==============================================================%%
%%==============                                  ==============%%
%%======                      Section5                    ======%%
%%====                                                      ====%%
%%==                                                          ==%%
%%====                         error estimate          ====%%
%%======                                                  ======%%
%%==============                                  ==============%%
%%==============================================================%%
\section{Error estimates}\label{Sec5}

In this section we will prove Theorem \ref{erroresti}. 

\begin{lem}\label{semierroresti}
Let $h_{3}$ be as in Lemma \ref{lemkuri5}. 
Then there exists a constant $C=C(T)>0$ such that 
\begin{align*}
&\|L^{1/2}(\widehat{v}_{h} - v)\|_{L^{\infty}(0, T; H)} 
+ \|B^{1/2}(\overline{v}_{h}-v)\|_{L^2(0, T; H)} 
+ \|\widehat{\varphi}_{h} - \varphi\|_{L^{\infty}(0, T; V_{2})} 
\\ 
&+ \|\widehat{\theta}_{h} - \theta\|_{L^{\infty}(0, T; H)} 
+ \|\overline{\theta}_{h} - \theta\|_{L^2(0, T; V_{1})} 
\leq C h^{1/2} + C\|\overline{f}_{h} - f\|_{L^2(0, T; H)}
\end{align*}
for all $h \in (0, h_{3})$, where $v = \frac{d\varphi}{dt}$. 
\end{lem}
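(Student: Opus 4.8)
The plan is to derive a Gronwall inequality for the error by testing the difference between \ref{Ph} and \eqref{df1}, \eqref{df2}. Set $\rho_h:=\widehat{\theta}_h-\theta$, $\psi_h:=\widehat{\varphi}_h-\varphi$, $w_h:=\widehat{v}_h-v$. Since $\frac{d\widehat{\varphi}_h}{dt}=\overline{v}_h$, $\frac{d\widehat{v}_h}{dt}=\overline{z}_h$ and $v=\frac{d\varphi}{dt}$, we have $\frac{d\psi_h}{dt}=\overline{v}_h-v$, $\frac{dw_h}{dt}=\overline{z}_h-\frac{d^2\varphi}{dt^2}$, and subtracting \eqref{df1}, \eqref{df2} from the first two lines of \ref{Ph} yields, in $H$ for a.a.\ $t\in(0,T)$,
\begin{align}
&\frac{d\rho_h}{dt}+(\overline{v}_h-v)+A_{1}(\overline{\theta}_h-\theta)=\overline{f}_h-f,\label{erp1}\\
&L\frac{dw_h}{dt}+B(\overline{v}_h-v)+A_{2}(\overline{\varphi}_h-\varphi)+(\Phi\overline{\varphi}_h-\Phi\varphi)+({\cal L}\overline{\varphi}_h-{\cal L}\varphi)=\overline{\theta}_h-\theta.\label{erp2}
\end{align}
I would test \eqref{erp1} by $\overline{\theta}_h-\theta\in D(A_1)$ and \eqref{erp2} by $\overline{v}_h-v\in D(B)\cap V_2$ (these memberships hold for a.a.\ $t$ by Theorem \ref{maintheorem1} and the regularity of the solution of \ref{P}), add the resulting identities, and append the elementary relation $\frac12\frac{d}{dt}\|\psi_h\|_H^2=(\overline{v}_h-v,\psi_h)_H$. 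The point is that the two coupling terms $(\overline{v}_h-v,\overline{\theta}_h-\theta)_H$ so produced cancel, leaving an energy relation in which $\rho_h$, $L^{1/2}w_h$, $\langle A_2^*\psi_h,\psi_h\rangle_{V_2^*,V_2}$ and $\|\psi_h\|_H^2$ carry time derivatives and $(A_1(\overline{\theta}_h-\theta),\overline{\theta}_h-\theta)_H$, $\|B^{1/2}(\overline{v}_h-v)\|_H^2$ are nonnegative.

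The technical heart is the treatment of the time-derivative terms, each of which I would handle by splitting a piecewise constant interpolant into its piecewise linear partner plus a term of size $h$. Using $\overline{\theta}_h-\widehat{\theta}_h=((n+1)h-t)\frac{d\widehat{\theta}_h}{dt}$ on $(nh,(n+1)h)$, writing $\overline{\theta}_h-\theta=\rho_h+(\overline{\theta}_h-\widehat{\theta}_h)$ turns the first term of \eqref{erp1} into $\frac12\frac{d}{dt}\|\rho_h\|_H^2$, a nonnegative quantity, and a remainder $-(\frac{d\theta}{dt},\overline{\theta}_h-\widehat{\theta}_h)_H$ whose integral is $\leq C\|\frac{d\theta}{dt}\|_{L^2(0,T;H)}\|\overline{\theta}_h-\widehat{\theta}_h\|_{L^2(0,T;H)}\leq Ch$ by \eqref{rem6} and Lemma \ref{lemkuri2}. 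Likewise, with $\overline{v}_h-v=w_h+(\overline{v}_h-\widehat{v}_h)$ and the symmetry of $L$, the first term of \eqref{erp2} produces $\frac12\frac{d}{dt}\|L^{1/2}w_h\|_H^2$, a nonnegative term, and a remainder $O(h)$ controlled by \eqref{rem5}, Lemma \ref{lemkuri5} and $\frac{d^2\varphi}{dt^2}\in L^\infty(0,T;H)$; the $B$-term is $\|B^{1/2}(\overline{v}_h-v)\|_H^2$; and, since $A_2^*$ is symmetric with $A_2^*=A_2$ on $D(A_2)$, the $A_2$-term becomes $\frac12\frac{d}{dt}\langle A_2^*\psi_h,\psi_h\rangle_{V_2^*,V_2}$ plus a remainder $\leq C\|\overline{\varphi}_h-\widehat{\varphi}_h\|_{V_2}(\|\overline{v}_h\|_{V_2}+\|v\|_{V_2})$, again $O(h)$ after integration by \eqref{rem4} and Lemma \ref{lemkuri5}.

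For the nonlinear terms I would use (C7) (with $\overline{\varphi}_h,\varphi$ bounded in $L^\infty(0,T;V_2)\hookrightarrow L^\infty(0,T;V)$) and (C13) to get $\|\Phi\overline{\varphi}_h-\Phi\varphi\|_H+\|{\cal L}\overline{\varphi}_h-{\cal L}\varphi\|_H\leq C\|\overline{\varphi}_h-\varphi\|_{V_2}$, then combine with $\|\overline{\varphi}_h-\varphi\|_{V_2}\leq Ch+\|\psi_h\|_{V_2}$, $\|\overline{v}_h-v\|_H\leq Ch+\|w_h\|_H$ (from \eqref{rem4}, \eqref{rem5} and Lemma \ref{lemkuri5}) and $\|w_h\|_H^2\leq c_L^{-1}\|L^{1/2}w_h\|_H^2$ to bound $|(\Phi\overline{\varphi}_h-\Phi\varphi,\overline{v}_h-v)_H|+|({\cal L}\overline{\varphi}_h-{\cal L}\varphi,\overline{v}_h-v)_H|$ by $C(h^2+\|L^{1/2}w_h\|_H^2+\|\psi_h\|_{V_2}^2)$; the same splitting gives $(\overline{v}_h-v,\psi_h)_H\leq C(h^2+\|L^{1/2}w_h\|_H^2+\|\psi_h\|_H^2)$, and the source is absorbed by $\int_0^t(\overline{f}_h-f,\overline{\theta}_h-\theta)_H\,ds\leq\frac12\|\overline{f}_h-f\|_{L^2(0,T;H)}^2+Ch^2+C\int_0^t\|\rho_h\|_H^2\,ds$. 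Adding everything, integrating on $(0,t)$, using $\rho_h(0)=w_h(0)=\psi_h(0)=0$, discarding the nonnegative remainders, and invoking (C4) (for $\int_0^t(A_1(\overline{\theta}_h-\theta),\overline{\theta}_h-\theta)_H\geq\sigma_1\int_0^t\|\overline{\theta}_h-\theta\|_{V_1}^2-\int_0^t\|\overline{\theta}_h-\theta\|_H^2$, with $\int_0^t\|\overline{\theta}_h-\theta\|_H^2\leq Ch^2+C\int_0^t\|\rho_h\|_H^2$) and (C11) (for $\frac12\langle A_2^*\psi_h(t),\psi_h(t)\rangle_{V_2^*,V_2}+\frac12\|\psi_h(t)\|_H^2\geq\frac{\omega_1}{2}\|\psi_h(t)\|_{V_2}^2$), I expect to arrive at
\begin{align*}
&\|\rho_h(t)\|_H^2+\|L^{1/2}w_h(t)\|_H^2+\|\psi_h(t)\|_{V_2}^2+\int_0^t\|\overline{\theta}_h-\theta\|_{V_1}^2\,ds+\int_0^t\|B^{1/2}(\overline{v}_h-v)\|_H^2\,ds\\
&\qquad\leq Ch+C\|\overline{f}_h-f\|_{L^2(0,T;H)}^2+C\int_0^t\bigl(\|\rho_h\|_H^2+\|L^{1/2}w_h\|_H^2+\|\psi_h\|_{V_2}^2\bigr)\,ds
\end{align*}
for all $t\in[0,T]$ and $h\in(0,h_3)$, whence the Gronwall lemma bounds the left-hand side by $C(h+\|\overline{f}_h-f\|_{L^2(0,T;H)}^2)$, and taking square roots (using $\rho_h=\widehat{\theta}_h-\theta$, $\psi_h=\widehat{\varphi}_h-\varphi$, $w_h=\widehat{v}_h-v$ and $\sqrt{a+b}\leq\sqrt a+\sqrt b$) gives the claim. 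The main obstacle is precisely this bookkeeping: keeping every discrepancy $\overline{\theta}_h-\widehat{\theta}_h$, $\overline{v}_h-\widehat{v}_h$, $\overline{\varphi}_h-\widehat{\varphi}_h$ at order $h$ (or $h^2$ in an $L^2$ sense) via \eqref{rem4}--\eqref{rem6} together with the uniform estimates of Section \ref{Sec4}, which is what produces the rate $h^{1/2}$.
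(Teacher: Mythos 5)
Your proposal is correct and follows essentially the same route as the paper: subtract the continuous equations from \ref{Ph}, run an energy estimate on the error, control every interpolant discrepancy $\overline{\theta}_h-\widehat{\theta}_h$, $\overline{v}_h-\widehat{v}_h$, $\overline{\varphi}_h-\widehat{\varphi}_h$ at order $h$ via \eqref{rem4}--\eqref{rem6} and the uniform bounds of Section \ref{Sec4}, and close with Gronwall. The only (harmless) differences are that you test the first error equation with $\overline{\theta}_h-\theta$ instead of $\widehat{\theta}_h-\theta$ — which moves the $O(h)$ correction from the $A_1$-term (where the paper uses Lemma \ref{lemkuri3}) into the time-derivative term — and that you exploit the exact cancellation of the coupling terms, whereas the paper bounds them separately by Young.
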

\begin{proof}
We infer from the first equations in \ref{Ph} and \eqref{df1} that 
\begin{align}\label{e1}
&\frac{1}{2}\frac{d}{dt}\|\widehat{\theta}_{h}(t) - \theta(t)\|_{H}^2 
\notag \\ 
&= - (\overline{v}_{h}(t)-v(t), \widehat{\theta}_{h}(t)-\theta(t))_{H} 
    - (A_{1}(\overline{\theta}_{h}(t)-\theta(t)), 
                              \widehat{\theta}_{h}(t) - \overline{\theta}_{h}(t))_{H} 
\notag \\
    &\,\quad- \langle A_{1}^{*}(\overline{\theta}_{h}(t)-\theta(t)), 
                                 \overline{\theta}_{h}(t)-\theta(t) \rangle_{V_{1}^{*}, V_{1}} 
    + (\overline{f}_{h}(t) - f(t), \widehat{\theta}_{h}(t) - \theta(t))_{H}.  
\end{align}
Here we derive from the Young inequality and (C3) that 
\begin{align}\label{e2}
&- (\overline{v}_{h}(t)-v(t), \widehat{\theta}_{h}(t)-\theta(t))_{H} 
\notag \\ 
&\leq \frac{1}{2}\|\overline{v}_{h}(t)-v(t)\|_{H}^2 
       + \frac{1}{2}\|\widehat{\theta}_{h}(t)-\theta(t)\|_{H}^2 
\notag \\ 
&\leq \|\overline{v}_{h}(t)-\widehat{v}_{h}(t)\|_{H}^2 
         + \|\widehat{v}_{h}(t)-v(t)\|_{H}^2 
         + \frac{1}{2}\|\widehat{\theta}_{h}(t)-\theta(t)\|_{H}^2 
\notag \\ 
&\leq \|\overline{v}_{h}(t)-\widehat{v}_{h}(t)\|_{H}^2 
         + \frac{1}{c_{L}}\|L^{1/2}(\widehat{v}_{h}(t)-v(t))\|_{H}^2 
         + \frac{1}{2}\|\widehat{\theta}_{h}(t)-\theta(t)\|_{H}^2.  
\end{align} 
It follows from (C4) that 
\begin{align}\label{e3}
&- \langle A_{1}^{*}(\overline{\theta}_{h}(t)-\theta(t)), 
                                      \overline{\theta}_{h}(t)-\theta(t) \rangle_{V_{1}^{*}, V_{1}} 
\notag \\ 
&\leq -\sigma_{1}\|\overline{\theta}_{h}(t)-\theta(t)\|_{V_{1}}^2 
         + \|\overline{\theta}_{h}(t)-\theta(t)\|_{H}^2  
\notag \\ 
&\leq -\sigma_{1}\|\overline{\theta}_{h}(t)-\theta(t)\|_{V_{1}}^2 
         + 2\|\overline{\theta}_{h}(t) - \widehat{\theta}_{h}(t)\|_{H}^2 
         + 2\|\widehat{\theta}_{h}(t) - \theta(t)\|_{H}^2.  
\end{align} 
We have from the Young inequality that  
\begin{align}\label{e4}
(\overline{f}_{h}(t) - f(t), \widehat{\theta}_{h}(t) - \theta(t))_{H} 
\leq \frac{1}{2}\|\overline{f}_{h}(t) - f(t)\|_{H}^2 
+ \frac{1}{2}\|\widehat{\theta}_{h}(t) - \theta(t)\|_{H}^2.  
\end{align}
Thus we see from \eqref{e1}-\eqref{e4} and the integration over $(0, t)$, 
where $t \in [0, T]$, Lemma \ref{lemkuri3}, \eqref{rem6}, Lemma \ref{lemkuri2}, 
\eqref{rem5} and Lemma \ref{lemkuri5} that 
there exists a constant $C_{1} = C_{1}(T) > 0$ such that 
\begin{align}\label{e5}
&\frac{1}{2}\|\widehat{\theta}_{h}(t) - \theta(t)\|_{H}^2 
+ \sigma_{1}\int_{0}^{t}\|\overline{\theta}_{h}(s) - \theta(s)\|_{V_{1}}^2\,ds 
\notag \\ 
&\leq C_{1}h 
+ C_{1}\int_{0}^{t} \|L^{1/2}(\widehat{v}_{h}(s)-v(s))\|_{H}^2\,ds 
+ C_{1}\int_{0}^{t}\|\widehat{\theta}_{h}(s)-\theta(s)\|_{H}^2\,ds 
\notag \\ 
&\,\quad+ C_{1}\|\overline{f}_{h} - f\|_{L^2(0, T; H)}^2 
\end{align}
for all $t \in [0, T]$ and all $h \in (0, h_{3})$. 

Next we observe that the identity $\frac{d\widehat{v}_{h}}{dt}=\overline{z}_{h}$, 
putting $z:=\frac{dv}{dt}$,  
the second equations in \ref{Ph} and \eqref{df2} imply that 
\begin{align}\label{e6}
&\frac{1}{2}\frac{d}{dt}\|L^{1/2}(\widehat{v}_{h}(t)-v(t))\|_{H}^2 
\notag \\ 
&= (L(\overline{z}_{h}(t)-z(t)), \widehat{v}_{h}(t)-\overline{v}_{h}(t))_{H} 
     + (L(\overline{z}_{h}(t)-z(t)), \overline{v}_{h}(t)-v(t))_{H} 
\notag \\ 
&= (L(\overline{z}_{h}(t)-z(t)), \widehat{v}_{h}(t)-\overline{v}_{h}(t))_{H} 
     - (B(\overline{v}_{h}(t)-v(t)), \overline{v}_{h}(t)-v(t))_{H} 
\notag \\ 
&\,\quad - (A_{2}(\overline{\varphi}_{h}(t)-\varphi(t)), \overline{v}_{h}(t)-v(t))_{H} 
- (\Phi\overline{\varphi}_{h}(t)-\Phi\varphi(t), \overline{v}_{h}(t)-v(t))_{H} 
\notag \\ 
&\,\quad- ({\cal L}\overline{\varphi}_{h}(t)-{\cal L}\varphi(t), 
                                                                          \overline{v}_{h}(t)-v(t))_{H} 
+ (\overline{\theta}_{h}(t) - \theta(t), \overline{v}_{h}(t)-v(t))_{H}.    
\end{align}
Here, recalling that the linear operator $L : H \to H$ is bounded, 
we can obtain that 
there exists a constant $C_{2}>0$ such that 
\begin{align}\label{e7}
(L(\overline{z}_{h}(t)-z(t)), \widehat{v}_{h}(t)-\overline{v}_{h}(t))_{H} 
&\leq \|L(\overline{z}_{h}(t)-z(t))\|_{H}\|\widehat{v}_{h}(t)-\overline{v}_{h}(t)\|_{H} 
\notag \\ 
&\leq C_{2}\|\overline{z}_{h}(t)-z(t)\|_{H}\|\widehat{v}_{h}(t)-\overline{v}_{h}(t)\|_{H} 
\end{align}
for a.a.\ $t \in (0, T)$ and all $h \in (0, h_{3})$. 
Owing to 
the identities $\overline{v}_{h}=\frac{d\widehat{\varphi}_{h}}{dt}$, 
$v=\frac{d\varphi}{dt}$ and    
the boundedness of the operator $A_{2}^{*} : V_{2} \to V_{2}^{*}$, 
it holds that there exists a constant $C_{3}>0$ such that 
\begin{align}\label{e8}
&- (A_{2}(\overline{\varphi}_{h}(t)-\varphi(t)), \overline{v}_{h}(t)-v(t))_{H} 
\notag \\ 
&= - \langle A_{2}^{*}(\overline{\varphi}_{h}(t)-\widehat{\varphi}_{h}(t)), 
                                                    \overline{v}_{h}(t)-v(t) \rangle_{V_{2}^{*}, V_{2}} 
- \frac{1}{2}\frac{d}{dt}\|A_{2}^{1/2}(\widehat{\varphi}_{h}(t)-\varphi(t))\|_{H}^2 
\notag \\ 
&\leq C_{3}\|\overline{\varphi}_{h}(t)-\widehat{\varphi}_{h}(t)\|_{V_{2}}
                                                                   \|\overline{v}_{h}(t)-v(t)\|_{V_{2}} 
- \frac{1}{2}\frac{d}{dt}\|A_{2}^{1/2}(\widehat{\varphi}_{h}(t)-\varphi(t))\|_{H}^2 
\end{align}
for a.a.\ $t \in (0, T)$ and all $h \in (0, h_{3})$. 
We derive from (C7), Lemma \ref{lemkuri1}, the Young inequality and (C3) that 
there exists a constant $C_{4}=C_{4}(T)>0$ such that 
\begin{align}\label{e9}
&- (\Phi\overline{\varphi}_{h}(t)-\Phi\varphi(t), \overline{v}_{h}(t)-v(t))_{H} 
\notag \\ 
&\leq C_{\Phi}(1 + \|\overline{\varphi}_{h}(t)\|_{V}^{p} + \|\varphi(t)\|_{V}^{q})
               \|\overline{\varphi}_{h}(t)-\varphi(t)\|_{V}\|\overline{v}_{h}(t)-v(t)\|_{H} 
\notag \\ 
&\leq C_{4}\|\overline{\varphi}_{h}(t)-\varphi(t)\|_{V}\|\overline{v}_{h}(t)-v(t)\|_{H} 
\notag \\ 
&\leq \frac{C_{4}}{2}\|\overline{\varphi}_{h}(t)-\varphi(t)\|_{V}^2 
         + \frac{C_{4}}{2}\|\overline{v}_{h}(t)-v(t)\|_{H}^2 
\notag \\ 
&\leq C_{4}\|\overline{\varphi}_{h}(t)-\widehat{\varphi}_{h}(t)\|_{V}^2 
         + C_{4}\|\widehat{\varphi}_{h}(t)-\varphi(t)\|_{V}^2 
\notag \\
    &\,\quad+ C_{4}\|\overline{v}_{h}(t)-\widehat{v}_{h}(t)\|_{H}^2 
         + \frac{C_{4}}{c_{L}}\|L^{1/2}(\widehat{v}_{h}(t)-v(t))\|_{H}^2 
\end{align}
for a.a.\ $t \in (0, T)$ and all $h \in (0, h_{3})$. 
It follows from (C13), 
the continuity of the embedding $V \hookrightarrow H$, 
the Young inequality and (C3) that 
there exists a constant $C_{5}>0$ satisfying 
\begin{align}\label{e10}
&- ({\cal L}\overline{\varphi}_{h}(t)-{\cal L}\varphi(t), \overline{v}_{h}(t)-v(t))_{H} 
\notag \\ 
&\leq C_{5}\|\overline{\varphi}_{h}(t)-\varphi(t)\|_{V}\|\overline{v}_{h}(t)-v(t)\|_{H} 
\notag \\ 
&\leq \frac{C_{5}}{2}\|\overline{\varphi}_{h}(t)-\varphi(t)\|_{V}^2 
         + \frac{C_{5}}{2}\|\overline{v}_{h}(t)-v(t)\|_{H}^2 
\notag \\ 
&\leq C_{5}\|\overline{\varphi}_{h}(t)-\widehat{\varphi}_{h}(t)\|_{V}^2 
         + C_{5}\|\widehat{\varphi}_{h}(t)-\varphi(t)\|_{V}^2 
\notag \\
    &\,\quad+ C_{5}\|\overline{v}_{h}(t)-\widehat{v}_{h}(t)\|_{H}^2 
         + \frac{C_{5}}{c_{L}}\|L^{1/2}(\widehat{v}_{h}(t)-v(t))\|_{H}^2.  
\end{align}
The Young inequality and (C3) yield that 
\begin{align}\label{e11}
&(\overline{\theta}_{h}(t) - \theta(t), \overline{v}_{h}(t)-v(t))_{H} 
\notag \\ 
&= (\overline{\theta}_{h}(t) - \widehat{\theta}_{h}(t), \overline{v}_{h}(t)-v(t))_{H} 
    + (\widehat{\theta}_{h}(t) - \theta(t), \overline{v}_{h}(t) - \widehat{v}_{h}(t))_{H} 
\notag \\ 
&\,\quad + (\widehat{\theta}_{h}(t) - \theta(t), \widehat{v}_{h}(t) - v(t))_{H} 
\notag \\ 
&\leq \|\overline{\theta}_{h}(t) - \widehat{\theta}_{h}(t)\|_{H}
                                                                    \|\overline{v}_{h}(t)-v(t)\|_{H} 
        + \|\widehat{\theta}_{h}(t) - \theta(t)\|_{H}
                                      \|\overline{v}_{h}(t) - \widehat{v}_{h}(t)\|_{H} 
\notag \\ 
&\,\quad + \frac{1}{2}\|\widehat{\theta}_{h}(t) - \theta(t)\|_{H}^2 
             + \frac{1}{2c_{L}}\|L^{1/2}(\widehat{v}_{h}(t) - v(t))\|_{H}^2.   
\end{align}
Thus we infer from \eqref{e6}-\eqref{e11}, 
the monotonicity of $\Phi$, 
the integration over $(0, t)$, where $t \in [0, T]$, 
\eqref{rem4}-\eqref{rem6}, Lemmas \ref{lemkuri2} and \ref{lemkuri5}    
that there exists a constant $C_{6}=C_{6}(T)>0$ such that 
\begin{align}\label{e12}
&\frac{1}{2}\|L^{1/2}(\widehat{v}_{h}(t)-v(t))\|_{H}^2 
+ \frac{1}{2}\|A_{2}^{1/2}(\widehat{\varphi}_{h}(t)-\varphi(t))\|_{H}^2 
+ \int_{0}^{t}\|B^{1/2}(\overline{v}_{h}(s)-v(s))\|_{H}^2\,ds  
\notag \\ 
&\leq C_{6}h   
+ C_{6}\int_{0}^{t}\|\widehat{\varphi}_{h}(s)-\varphi(s)\|_{V}^2\,ds  
+ C_{6}\int_{0}^{t}\|L^{1/2}(\widehat{v}_{h}(s)-v(s))\|_{H}^2\,ds  
\notag \\ 
&\,\quad+ C_{6}\int_{0}^{t}\|\widehat{\theta}_{h}(s) - \theta(s)\|_{H}^2\,ds
\end{align}
for all $t \in [0, T]$ and all $h \in (0, h_{3})$. 
On the other hand, 
we have from the identities $\frac{d\widehat{\varphi}_{h}}{dt}=\overline{v}_{h}$, 
$\frac{d\varphi}{dt}=v$, 
the Young inequality, (C3) 
and the continuity of the embedding $V_{2} \hookrightarrow H$ 
that there exists a constant $C_{7}>0$ such that 
\begin{align}\label{e13}
&\frac{1}{2}\frac{d}{dt}\|\widehat{\varphi}_{h}(t)-\varphi(t)\|_{H}^2 
\notag \\ 
&= (\overline{v}_{h}(t)-v(t), \widehat{\varphi}_{h}(t)-\varphi(t))_{H} 
\notag \\ 
&\leq \frac{1}{2}\|\overline{v}_{h}(t)-v(t)\|_{H}^2 
         + \frac{1}{2}\|\widehat{\varphi}_{h}(t)-\varphi(t)\|_{H}^2 
\notag \\ 
&\leq \|\overline{v}_{h}(t)-\widehat{v}_{h}(t)\|_{H}^2 
         + \frac{1}{c_{L}}\|L^{1/2}(\widehat{v}_{h}(t) - v(t))\|_{H}^2  
         + C_{7}\|\widehat{\varphi}_{h}(t)-\varphi(t)\|_{V_{2}}^2 
\end{align}
for a.a.\ $t \in (0, T)$ and all $h \in (0, h_{3})$. 
Hence we derive from \eqref{e12}, the integration \eqref{e13} over $(0, t)$, 
where $t \in [0, T]$, and (C11) 
that there exists a constant $C_{8}=C_{8}(T)>0$ satisfying 
\begin{align}\label{e15}
&\frac{1}{2}\|L^{1/2}(\widehat{v}_{h}(t)-v(t))\|_{H}^2 
+ \frac{\omega_{1}}{2}\|\widehat{\varphi}_{h}(t) - \varphi(t)\|_{V_{2}}^2 
+ \int_{0}^{t}\|B^{1/2}(\overline{v}_{h}(s)-v(s))\|_{H}^2\,ds  
\notag \\ 
&\leq C_{8}h   
+ C_{8}\int_{0}^{t}\|\widehat{\varphi}_{h}(s)-\varphi(s)\|_{V_{2}}^2\,ds  
+ C_{8}\int_{0}^{t}\|L^{1/2}(\widehat{v}_{h}(s)-v(s))\|_{H}^2\,ds  
\notag \\ 
&\,\quad+ C_{8}\int_{0}^{t}\|\widehat{\theta}_{h}(s) - \theta(s)\|_{H}^2\,ds. 
\end{align}    

Therefore combining \eqref{e5} and \eqref{e15} means that 
there exists a constant $C_{9}=C_{9}(T)>0$ such that 
\begin{align*}
&\frac{1}{2}\|L^{1/2}(\widehat{v}_{h}(t)-v(t))\|_{H}^2 
+ \frac{\omega_{1}}{2}\|\widehat{\varphi}_{h}(t) - \varphi(t)\|_{V_{2}}^2 
+ \int_{0}^{t}\|B^{1/2}(\overline{v}_{h}(s)-v(s))\|_{H}^2\,ds  
\notag \\ 
& + \frac{1}{2}\|\widehat{\theta}_{h}(t) - \theta(t)\|_{H}^2 
+ \sigma_{1}\int_{0}^{t}\|\overline{\theta}_{h}(s) - \theta(s)\|_{V_{1}}^2\,ds 
\notag \\ 
&\leq C_{9}h 
+ C_{9}\int_{0}^{t}\|\widehat{\varphi}_{h}(s) - \varphi(s)\|_{V_{2}}^2\,ds 
+ C_{9}\int_{0}^{t}\|L^{1/2}(\widehat{v}_{h}(s)-v(s))\|_{H}^2\,ds  
\notag \\ 
&\,\quad+ C_{9}\|\overline{f}_{h} - f\|_{L^2(0, T; H)}^2 
+ C_{9}\int_{0}^{t}\|\widehat{\theta}_{h}(s) - \theta(s)\|_{H}^2\,ds
\end{align*}
for all $t \in [0, T]$ and all $h \in (0, h_{3})$.  
Then, applying the Gronwall lemma, we can obtain Lemma \ref{semierroresti}.    
\end{proof}

\begin{prth1.3}
Observing that there exists a constant $C_{1}>0$ such that 
$$
\|\overline{f}_{h}-f\|_{L^2(0, T; H)} \leq C_{1}h^{1/2}  
$$
for all $h>0$ (see \cite[Section 5]{CK}),  
we can prove Theorem \ref{erroresti} by Lemma \ref{semierroresti}. 
\qed  
\end{prth1.3}

\section*{Acknowledgments}
The author is supported by JSPS Research Fellowships 
for Young Scientists (No.\ 18J21006).   
%
%
%
%%==============================================================%%
%%==============                                  ==============%%
%%======                                                  ======%%
%%====                                                      ====%%
%%==                         Reference                        ==%%
%%====                                                      ====%%
%%======                                                  ======%%
%%==============                                  ==============%%
%%==============================================================%%

\end{document}